\newtheorem{theorem}{Theorem} [section]
\newtheorem{prop}[theorem]{Proposition}
\newtheorem{lemma}[theorem]{Lemma}
\newtheorem{cor}[theorem]{Corollary}
\newtheorem{question}[theorem]{Question}
\theoremstyle{definition}
\newtheorem{example}{Example}
\theoremstyle{remark}
\newtheorem*{remark}{Remark}
\numberwithin{equation}{section}
\numberwithin{figure}{section}
\numberwithin{example}{section}
\newcommand\A{{\mathbb A}}
\newcommand\C{{\mathbb C}}
\newcommand\CC{{\mathbb C}}
\newcommand\N{{\mathbb N}}
\renewcommand\P{{\mathbb P}}
\newcommand\PP{{\mathbb P}}
\newcommand\R{{\mathbb R}}
\newcommand\RR{{\mathbb R}}
\newcommand\Q{{\mathbb Q}}
\newcommand\eps{\varepsilon}
\renewcommand\phi{\varphi}
\newcommand\iso{\simeq} 
\newcommand\cM{\mathcal{M}}
\newcommand\Gal{\operatorname{Gal}}
\newcommand\supp{\operatorname{supp}}   
\renewcommand\Re {\operatorname{Re}}
\renewcommand\Im {\operatorname{Im}}
\newcommand\capacity {\operatorname{cap}} 
\newcommand\Res {\operatorname{Res}} 
\newcommand\M {\mathrm{M}}
\newcommand\Per {\mathrm{Per}}
\newcommand\Preper {\mathrm{Preper}}
\newcommand\Bif {\mathrm{Bif}}
\newcommand\kbar {\overline{k}}
\newcommand\Qbar {\overline{\Q}}
\newcommand\kvbar {\overline{k}_v}
\newcommand\Berk {\operatorname{Berk}}
\newcommand\hhat {\hat{h}}
\begin{document}

\title{Bifurcation measures and quadratic rational maps}

\author{Laura De Marco}
\address{Department of Mathematics, Northwestern University, USA}
\email{demarco@math.northwestern.edu}
\author{Xiaoguang Wang}
\address{Department of Mathematics, Zhejiang University, P.R.China} 
\email{wxg688@163.com}
\author{Hexi Ye}
\address{Department of Mathematics, University of British Columbia, Canada}
\email{yehexi@math.ubc.ca}

  \subjclass[2010]{Primary 37F45; Secondary 37P30}

\date{\today}

\begin{abstract}
We study critical orbits and bifurcations within the moduli space $\M_2$ of quadratic rational maps, $f: \P^1\to \P^1$.  We focus on the family of curves, $\Per_1(\lambda) \subset \M_2$ for $\lambda\in\C$, defined by the condition that each $f\in \Per_1(\lambda)$ has a fixed point of multiplier $\lambda$.  We prove that the curve $\Per_1(\lambda)$ contains infinitely many postcritically-finite maps if and only if $\lambda=0$, addressing a special case of \cite[Conjecture 1.4]{BD:polyPCF}.  We also show that the two critical points of $f$ define distinct bifurcation measures along $\Per_1(\lambda)$.
\end{abstract}


\thanks{The research was supported by the National Science Foundation.}

\maketitle

\thispagestyle{empty}

\section{Introduction}

In this article, we study the dynamics of holomorphic maps $f: \P^1_\C \to \P^1_\C$ of degree $2$.  We concentrate our analysis on the lines $\Per_1(\lambda)$ within the moduli space $\M_2\iso\C^2$ of quadratic rational maps, introduced by Milnor in \cite{Milnor:quad}.  For each $\lambda\in\C$, $\Per_1(\lambda)$ is the set of all (conformal conjugacy classes of) maps $f$ with a fixed point $p$ at which $f'(p) = \lambda$; so $\Per_1(0)$ is the family of maps conjugate to a polynomial.

Our first main result addresses a special case of Conjecture 1.4 of \cite{BD:polyPCF}.  (See also the corrected version in \cite[\S6.1]{D:stableheight} and this case presented in \cite[\S6.5]{Silverman:moduli}.)  The conjecture aims to classify the algebraic subvarieties of the moduli space $\M_d$ containing a Zariski-dense set of postcritically-finite maps, for each degree $d\geq 2$.  By definition, a rational map of degree $d$ is postcritically finite if each of its $2d-2$ critical points has a finite forward orbit.  It is known that the postcritically finite maps form a Zariski-dense subset of $\M_d$ in every degree $d\geq 2$, but the subvarieties intersecting many of them are expected to be quite special.  

\begin{theorem}  \label{PCF maps}
The curve $\Per_1(\lambda)$ in $\M_2$ contains infinitely many postcritically-finite maps if and only if $\lambda = 0$.
\end{theorem}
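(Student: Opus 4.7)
The easy direction is immediate: $\Per_1(0)\subset\M_2$ consists of (conjugacy classes of) maps with a superattracting fixed point, hence of quadratic polynomials $z\mapsto z^2+c$. The classical Mandelbrot family contains infinitely many PCF parameters (Misiurewicz points and centers of hyperbolic components), so $\Per_1(0)$ does too.

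For the converse, my plan is an arithmetic equidistribution argument in the spirit of Baker--DeMarco. I would first parametrize $\Per_1(\lambda)\iso\C$ by a coordinate $t$ following Milnor, and mark the two critical points $c_1(t),c_2(t)$, passing to a finite cover if necessary. For each $i$, I would attach a dynamical canonical height $\hhat_i\colon\Per_1(\lambda)(\Qbar)\to[0,\infty)$ built from the Green's functions $G_{f_t,v}$ at every place $v$ of a number field of definition: informally, $\hhat_i(t_0)$ averages $G_{f_{t_0},v}(c_i(t_0))$ over Galois conjugates and places. By a Call--Silverman type argument, $f_{t_0}$ is PCF if and only if $\hhat_1(t_0)=\hhat_2(t_0)=0$. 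The archimedean bifurcation measure $\mu_i = dd^c_t\,G_{f_t}(c_i(t))$ on $\Per_1(\lambda)$ then arises as the curvature at the archimedean place of the semipositive adelic metrized line bundle underlying $\hhat_i$.

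Suppose for contradiction that $\lambda\ne 0$ and that there exist infinitely many PCF maps in $\Per_1(\lambda)$, giving an infinite sequence of algebraic parameters $t_n$ with $\hhat_1(t_n)=\hhat_2(t_n)=0$. Applying Yuan's equidistribution theorem (building on Baker--Rumely and Favre--Rivera-Letelier) separately for $i=1$ and $i=2$ to a generic subsequence of $(t_n)$, I obtain that the Galois orbits equidistribute simultaneously with respect to $\mu_1$ and with respect to $\mu_2$, forcing $\mu_1=\mu_2$ as probability measures on $\Per_1(\lambda)$.

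The main obstacle is therefore to establish that $\mu_1\ne\mu_2$ for every $\lambda\ne 0$---precisely the second main theorem flagged in the abstract---since this contradiction closes the argument. To separate the two measures I would try to find a parameter $t_0\in\Per_1(\lambda)$ at which the two critical orbits behave qualitatively differently: for instance, a neighborhood in which $c_1$ undergoes bifurcations while $c_2$ is passive (so $\mu_2$ vanishes there but $\mu_1$ does not), or an asymptotic analysis of the escape-rate functions $G_{f_t}(c_i(t))$ as $t\to\infty$ in the Milnor compactification, distinguishing the two via their boundary behavior. The polynomial locus $\lambda=0$ is singled out precisely because one critical point becomes fixed (the superattracting point at infinity), collapsing one of the bifurcation measures and destroying the obstruction; for $\lambda\ne 0$ both critical points remain dynamically free and one expects---and must prove---that they see genuinely different slices of the bifurcation locus.
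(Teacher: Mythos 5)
Your outline follows the same broad strategy as the paper---arithmetic equidistribution of the postcritically-finite parameters forces the two critical bifurcation measures to coincide, contradicting their distinctness (this is exactly the alternative endpoint mentioned in the remark at the end of Section \ref{proof section}; the paper's own proof instead contradicts the weaker Proposition \ref{no synchrony})---but your pivotal step, ``apply Yuan's equidistribution theorem,'' does not go through as stated. For $\lambda\not=0$ the height attached to each marked critical point is only \emph{quasi-adelic}: the local potentials $G^\pm_v$ are nontrivial at every place $v$ with $|1+\lambda+\cdots+\lambda^n|_v<1$ for some $n$, and there are infinitely many such places (for $\lambda=1$, every place of $\Q$). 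Hence the collection $\{\mu^\pm_v\}$ is not an adelic metric in the sense required by Baker--Rumely, Favre--Rivera-Letelier, or Yuan, and none of those theorems can simply be quoted. Overcoming this is the real content of Sections \ref{homogeneous}--\ref{equidistribution section}: continuity of the potentials through $t=\infty$ at archimedean and non-archimedean places (Theorems \ref{convergence} and \ref{non-archimedean convergence}), the capacity computation (Theorem \ref{cap}), the bound $|\cM_{k,n}|\leq Cn$ (Lemma \ref{places bound}) showing that the infinitely many nontrivial places contribute summably, and a quasi-adelic replacement for the standard equidistribution theorem (Theorem \ref{quasi-adelic equidistribution}, from \cite{Ye:quasi}). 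Your proposal neither acknowledges nor addresses this obstruction, and with the standard adelic theorems the argument would fail.

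There are three further gaps. First, before any canonical heights exist you must know that $\lambda$ and the postcritically-finite parameters are algebraic; this is Proposition \ref{algebraic}, obtained from Thurston rigidity (no flexible Latt\`es maps in degree $2$), and you assume it silently when you write ``algebraic parameters $t_n$.'' Second, the range $0<|\lambda|\leq 1$---in particular $\lambda$ a root of unity $\not=1$, where $1+\lambda+\cdots+\lambda^n$ vanishes for infinitely many $n$ and the whole potential-theoretic apparatus diverges---must be handled separately by the elementary dynamical fact that such a fixed point always attracts or accumulates a critical orbit with infinite forward orbit, so there are no postcritically-finite maps at all there; your argument makes no case distinction in $\lambda$. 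Third, the separation $\mu^+_\lambda\not=\mu^-_\lambda$ cannot be left at ``I would try to find a passive parameter'': as stated it is Theorem \ref{distinct measures}, whose proof requires evaluating both potentials at $t=\lambda-2$ (where $+1$ is fixed and $f_t$ is conjugate to a polynomial), comparing with the Mandelbrot Green's function $G_{\mathcal{M}}$, and a separate argument at $\lambda=-2$; alternatively, the paper's own easier route needs only $\Preper^+_\lambda\not=\Preper^-_\lambda$ (Proposition \ref{no synchrony}), proved by a proper-map argument on the unbounded stable component. Either way this step must actually be carried out, and your sketch does not do so.
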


\noindent
This result is the exact analog of Theorem 1.1 in \cite{BD:polyPCF} that treated cubic polynomials.  As in that setting, one implication is easy:  if $\lambda = 0$, the curve $\Per_1(0)$ defines the family of quadratic polynomials, and it contains infinitely many postcritically-finite maps (by a standard application of Montel's theorem on normal families).  The converse direction is more delicate; its proof, though similar in spirit to that of \cite[Theorem 1.1]{BD:polyPCF}, required different techniques, more in line with our work for the Latt\`es family of \cite{DWY:Lattes}.

The second theme of this paper is a study of the bifurcation locus in the curves $\Per_1(\lambda)$; refer to \S\ref{bifurcation} for definitions.  For each fixed $\lambda\not=0$, we work with an explicit parametrization of $\Per_1(\lambda)^{cm}$, a double cover of $\Per_1(\lambda)$ consisting of maps with marked critical points:
	$$f_t(z) = \frac{\lambda z}{z^2 + t z + 1}$$
with $t\in\C$.  The map $f_t$ has a fixed point at $z=0$ with multiplier $\lambda$; the critical points of $f_t$ are $\{\pm 1\}$ for all $t$; note that $f_t$ is conjugate to $f_{-t}$ via the conjugacy $z\mapsto -z$ interchanging the two critical points.  Each critical point determines a finite bifurcation measure on $\Per_1(\lambda)^{cm}$, which we denote by $\mu^+_\lambda$ and $\mu^-_\lambda$.  (The symmetry of $f_t$ implies that $\mu^-_\lambda = A_* \mu^+_\lambda$ for $A(t) = -t$.)  Our main result in this direction is:

\begin{theorem}  \label{distinct measures}
For every $\lambda\not=0$, we have $\mu^+_\lambda \not = \mu^-_\lambda$ in $\Per_1(\lambda)^{cm}$.
\end{theorem}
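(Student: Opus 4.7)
The plan is to argue by contradiction using Theorem~\ref{PCF maps}. Assume $\mu^+_\lambda = \mu^-_\lambda$ for some $\lambda \neq 0$, and attach to each marked critical point $c^\pm = \pm 1$ the escape-rate potential
\[
G^\pm_\lambda(t) \;=\; \lim_{n\to\infty} \frac{1}{2^n}\,\log\bigl\|F_t^{\,n}(\widetilde{c^\pm})\bigr\|
\]
on the parameter line $\C_t \cong \Per_1(\lambda)^{cm}$, where $F_t$ is a homogeneous lift of $f_t$ to $\C^2$ and $\widetilde{c^\pm}$ is a lift of $\pm 1$. Each $G^\pm_\lambda$ is continuous and subharmonic, and $\mu^\pm_\lambda = dd^c G^\pm_\lambda$ by construction.

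Next I would show that the two potentials are identical on $\C$. The involution $z \mapsto -z$ conjugates $f_t$ to $f_{-t}$ and swaps the two critical points, giving the symmetry identity $G^+_\lambda(t) = G^-_\lambda(-t)$; in particular the two potentials have identical logarithmic growth at $t = \infty$, so their difference is bounded on $\C$. Under the assumption $\mu^+_\lambda = \mu^-_\lambda$, this difference is also harmonic, hence constant by Liouville, and evaluating at $t = 0$ via the symmetry identity forces the constant to be zero. Thus $G^+_\lambda \equiv G^-_\lambda$ on $\C$.

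For $\lambda \in \Qbar \setminus \{0\}$ I would then upgrade this archimedean equality to equality of full adelic canonical heights $\hhat^\pm_\lambda$ on $\Per_1(\lambda)^{cm}(\Qbar)$, where $\hhat^\pm_\lambda$ has archimedean local component $G^\pm_\lambda$ and zero set equal to the parameters $t_0$ at which $c^\pm$ is preperiodic under $f_{t_0}$. The upgrade is carried out by arithmetic equidistribution of small-height parameters in the style of Baker-Hsia, Favre-Rivera-Letelier, and Yuan. Equality of heights then forces the preperiodic-parameter sets for $c^+$ and $c^-$ to coincide, producing an infinite set of PCF parameters in $\Per_1(\lambda)^{cm}$ and contradicting Theorem~\ref{PCF maps}. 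General $\lambda \in \C \setminus \{0\}$ would be handled by spreading out the family over $\Q(\lambda)$ and specializing, using that each preperiodic parameter $t_0$ is algebraic over $\Q(\lambda)$.

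The hard step is the upgrade from archimedean equality $\mu^+_\lambda = \mu^-_\lambda$ to equality of adelic canonical heights: matching the non-archimedean local components requires Berkovich-space analysis of the sort developed in \cite{DWY:Lattes}, rather than the polynomial methods of \cite{BD:polyPCF}, because $f_t$ is a genuine rational family rather than a polynomial one. A secondary subtlety is the passage from algebraic $\lambda$ to transcendental $\lambda$, which is settled by the specialization argument sketched above together with the algebraicity of the preperiodic parameters.
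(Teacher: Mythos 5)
Your opening step is fine and coincides with the paper's Lemma \ref{H1=H2}: if $\mu^+_\lambda=\mu^-_\lambda$ then the difference of the potentials is harmonic, of logarithmic growth, hence constant, and the symmetry $H^-_\lambda(t)=H^+_\lambda(-t)$ forces the constant to vanish. The gap is the ``upgrade'' step. Arithmetic equidistribution transfers information from a sequence of Galois-invariant sets of \emph{small canonical height} to the local measures at every place; it does not run in the reverse direction, from equality of the single archimedean component $\mu^+_\lambda=\mu^-_\lambda$ to equality of the adelic (here only quasi-adelic) heights $\hhat^{\pm}$. To apply Theorem \ref{equidistribution at all places} to both heights simultaneously you would need an infinite supply of parameters with $\hhat^+(t)=\hhat^-(t)=0$, i.e.\ infinitely many postcritically finite parameters --- exactly what Theorem \ref{PCF maps}, which you are invoking, rules out, so the argument cannot get started. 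Nor are the non-archimedean components equal for free: the symmetry only gives $\mu^-_v=A_*\mu^+_v$ with $A(t)=-t$, and Sections \ref{non-archimedean}--\ref{equidistribution section} show these local measures are genuinely nontrivial at infinitely many places, so knowing the archimedean components agree tells you nothing about the other places or about the zero sets of the heights. (Using Theorem \ref{PCF maps} is not formally circular, since its proof in the paper contradicts Proposition \ref{no synchrony} rather than Theorem \ref{distinct measures}; but the machinery needed for your upgrade is that same machinery, and it does not yield the implication you want.)

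There is a second gap in the scope of $\lambda$. Your arithmetic route requires $\lambda\in\Qbar$, not a root of unity (or $\lambda=1$): otherwise $\gamma_v(\lambda)$ need not converge, the quasi-adelic framework breaks down, and for $|\lambda|=1$, $\lambda\neq1$, the bifurcation locus is even noncompact (Proposition \ref{compactness}). The ``spread out over $\Q(\lambda)$ and specialize'' sketch does not repair this: equality of the two bifurcation measures for a fixed transcendental $\lambda$ is an analytic statement about that single parameter and is not an algebraic condition that specializes to algebraic values of $\lambda$. The paper's actual proof avoids all arithmetic input: after Lemma \ref{H1=H2} it evaluates the potentials at the single parameter $t=\lambda-2$, where $f_t$ is conjugate to a quadratic polynomial, obtaining $H^+_\lambda(\lambda-2)=\log|\lambda|$ and $H^-_\lambda(\lambda-2)=\tfrac12 G_{\mathcal M}(c(\lambda))+\log 2$ (Lemma \ref{H12}), and shows these differ when $\Re\lambda\le 1$, $\lambda\neq 0,-2$ (Lemma \ref{H1<=H2}); the remaining cases are handled by Lemma \ref{bifset} ($|\lambda|<1$ or $\Re\lambda>1$) and Proposition \ref{QPer1(-2)} ($\lambda=-2$). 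That argument is purely complex-analytic and covers every $\lambda\neq0$, including roots of unity and transcendental values, which your proposal does not.
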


\noindent
Theorem \ref{distinct measures} is not unexpected.  For any $\lambda$, the two critical points should behave independently.  In fact, it is not difficult to show that the critical points {\em cannot} satisfy any dynamical relation of the form $f^n(+1) \equiv f^m(-1)$ along $\Per_1(\lambda)^{cm}$; see Corollary \ref{independence}.  However, computational experiments suggested some unexpected alignment of the two bifurcation loci, $\mathrm{Bif}^+ = \supp\mu^+_\lambda$ and $\mathrm{Bif}^- = \supp \mu^-_\lambda$, for certain values of $\lambda$.  For example, for values of $\lambda$ near $-4$,   the two bifurcation loci appear remarkably similar.   See Figure \ref{L=-4} and Question \ref{distinct supports}.

A key ingredient in our proof of Theorem \ref{PCF maps} is an equidistribution statement, that parameters $t$ where the critical point $\pm 1$ has finite forward orbit for $f_t$ will be uniformly distributed with respect to the bifurcation measure $\mu^{\pm}_\lambda$.   Post-critically finite maps have algebraic multipliers, so it suffices to study the case where $\lambda\in\Qbar$; to prove the equidistribution result, we rely on the arithmetic methods introduced in \cite{Baker:Rumely:equidistribution} and \cite{FRL:equidistribution}.  But there are two features of $\Per_1(\lambda)$ that distinguish it from a series of recent articles on this theme (see e.g. \cite{BD:preperiodic, BD:polyPCF, Ghioca:Hsia:Tucker, GHT:preprint, Favre:Gauthier}); in particular, we could {\em not} directly apply the existing arithmetic equidistribution theorems for points of small height on $\P^1$.  
\begin{enumerate}
\item		The bifurcation locus can be noncompact, and the proof that the potential functions for the bifurcation measures are continuous across $t=\infty$ is more delicate (we show this in Theorem \ref{convergence}, with the method we used in \cite{DWY:Lattes}); and
\item		the canonical height function defined on $\Per_1(\lambda)$ (associated to each critical point) is only ``quasi-adelic,"  meaning that it may have nontrivial contributions from infinitely many places of any number field containing $\lambda$.  
\end{enumerate}
Because of (2), we use a modification of the original equidistribution result (and of its proofs, following \cite{BRbook, FRL:equidistribution}) that appears in \cite{Ye:quasi}.   We deduce the following result.  (The full statement of this theorem appears as Theorem \ref{equidistribution at all places}.)

\begin{theorem}  \label{equidistribution}
For every $\lambda \in \Qbar\setminus\{0\}$ with $\lambda$ not a root of unity, or for $\lambda=1$, the set
	$$\Preper^+_\lambda = \{t\in\Per_1(\lambda)^{cm}:  + 1 \mbox{ has finite forward orbit for } f_t\}$$
 is equidistributed with respect to $\mu^+_\lambda$; similarly for $\Preper^-_\lambda$ and $\mu^-_\lambda$.  More precisely, for any non-repeating sequence of finite sets $S_n \subset \Preper^+_\lambda$, the discrete probability measures
	$$\mu_n \; = \; \frac{1}{|G \cdot S_n|} \; \sum_{t \,\in \,G\cdot S_n} \; \delta_t$$
converge weakly to the measure $\mu^+_\lambda$, where $G= \Gal(\overline{\Q(\lambda)}/\Q(\lambda))$.
\end{theorem}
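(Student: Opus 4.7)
The plan is to apply the quasi-adelic arithmetic equidistribution theorem of \cite{Ye:quasi} to a canonical height on $\Per_1(\lambda)^{cm}$ attached to the critical point $+1$. Working over a number field $K$ containing $\lambda$, I would define, for each place $v$ of $K$, a local escape-rate (Green) function
$$g^+_{\lambda,v}(t) \; = \; \lim_{n\to\infty} \frac{1}{2^n} \log^+ |f_t^n(+1)|_v$$
on the Berkovich projective line $\Berk^1_v$ over $\C_v$. Summing these local heights over all places gives a non-negative height function $\hhat^+_\lambda$ on $\Per_1(\lambda)^{cm}(\Qbar)$ that vanishes precisely on $\Preper^+_\lambda$. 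At the archimedean place, $\Delta g^+_{\lambda,v} = \mu^+_\lambda$, which is what will let us recover the archimedean bifurcation measure as the limit.

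The next step is to verify the hypotheses of the equidistribution theorem of \cite{Ye:quasi}: (a) at every place $v$ the function $g^+_{\lambda,v}$ extends continuously to $\Berk^1_v$, in particular across the point $t = \infty$; (b) the total height $\hhat^+_\lambda = \sum_v g^+_{\lambda,v}$ makes sense as a quasi-adelic height, with local contributions summable and differing from a standard adelic height only by a convergent series of place contributions; and (c) at almost every place, $g^+_{\lambda,v}$ agrees with a model Green's function coming from an integral model of the family $f_t$. With these in hand, the theorem produces, for any non-repeating sequence $S_n \subset \Preper^+_\lambda$, weak convergence of the Galois-averaged measures $\mu_n$ to $\Delta g^+_{\lambda,v}$ at every place $v$; at the archimedean place this limit is exactly $\mu^+_\lambda$. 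The statement for $\Preper^-_\lambda$ and $\mu^-_\lambda$ is then the same argument with $+1$ replaced by $-1$, or equivalently is obtained from the $+$ case by pushforward under $t \mapsto -t$.

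The main technical obstacle is condition (a) at $t = \infty$, compounded by the noncompactness of the bifurcation locus flagged by point (1) in the introduction: as $t\to\infty$, one fixed point of $f_t$ escapes and the dynamics degenerate, so the behavior of $f_t^n(+1)$ near $t = \infty$ must be tracked at every place. I would handle the archimedean case using Theorem \ref{convergence}, the continuity-at-infinity result the authors have already flagged, following the strategy of \cite{DWY:Lattes}: rescale the iterates in a local coordinate near $t = \infty$ and show that the rescaled escape rate extends continuously, using the explicit form of $f_t$ to identify the limiting dynamical system. The non-archimedean case requires the same type of local analysis on $\Berk^1_v$, again isolating the limit map as $t\to\infty$ and showing the local height matches across the boundary.

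Condition (b) is where the hypothesis on $\lambda$ enters. Since $\lambda \in \Qbar^\times$ is not a root of unity (or equals $1$), one can control $|\lambda|_v$ at each place: the quantities $\log|\lambda|_v$ and $\log|\lambda - 1|_v$ contribute to local heights at infinitely many places in a way that is summable but not zero, which is precisely what forces the quasi-adelic rather than adelic framework. For $\lambda$ a nontrivial root of unity the potential summability would fail for the relevant places of good reduction of $\lambda$, which is why that case is excluded. I expect the bulk of the work to lie in verifying (a) at non-archimedean places and establishing the summability in (b); once both are done, the equidistribution statement follows directly from the abstract theorem in \cite{Ye:quasi}.
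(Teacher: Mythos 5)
Your overall strategy is the paper's: continuous local potentials at every place (including across $t=\infty$), a quasi-adelic height attached to each marked critical point, the equidistribution theorem of \cite{Ye:quasi}, and the identification of $\Preper^{\pm}_\lambda$ with the height-zero parameters via \cite{Call:Silverman}. However, the local potential you write down would fail at the first step. Since $f_t$ is not a polynomial and $\infty$ is not a fixed point (indeed $f_t(\infty)=0$), the orbit of $+1$ does not escape, and the naive limit $\lim_n 2^{-n}\log^+|f_t^n(+1)|_v$ is (essentially) identically zero; in particular its Laplacian in $t$ is not $\mu^+_\lambda$, and summing these local limits over places does not reproduce the Call--Silverman height $\hat{h}^+$. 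The correct potentials are built from homogeneous lifts, $H^{\pm}_{\lambda,v}(t)=\lim_n 2^{-n}\log\|F_t^n(\pm1,1)\|_v$ with $F_t(z_1,z_2)=(\lambda z_1z_2,\,z_1^2+tz_1z_2+z_2^2)$, and for both the continuity at $t=\infty$ and the height comparison one must also homogenize in the parameter, i.e.\ work with $F_n^{\pm}(t_1,t_2)=t_2^{2^{n-1}}F^n_{t_1/t_2}(\pm1,1)$ and the limits $G_v^{\pm}$ of Theorems \ref{convergence} and \ref{non-archimedean convergence}.

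Second, your condition (c) --- agreement with a model Green's function at almost every place --- is precisely what is \emph{not} available here: the local data are nontrivial at infinitely many places (that is what ``quasi-adelic'' means), and you cannot simply invoke an integral model outside a finite set. What replaces it, and what constitutes the bulk of the paper's argument, is quantitative: explicit formulas for the coefficients $B_n(\lambda),C_n(\lambda)$ and for $\Res(F_n^{\pm})$ give the capacities of the homogeneous sets $K^{\pm}_{\lambda,v}$ (Theorem \ref{cap}); the partition of places into the sets $\cM_{k,n}$, the counting bound $|\cM_{k,n}|\le Cn$ (Lemma \ref{places bound}), and the outer-radius bound of Proposition \ref{the bounds for K_v^+} yield the strong convergence required for quasi-adelicity (Proposition \ref{quasi-adelic}); and the product-formula cancellation of Lemma \ref{global} is what identifies the quasi-adelic height with the Call--Silverman height (Proposition \ref{same heights}), which is needed before \cite{Ye:quasi} can be applied to sets $S_n\subset\Preper^+_\lambda$. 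Finally, the hypothesis on $\lambda$ enters not through $\log|\lambda|_v$ and $\log|\lambda-1|_v$ at places of good reduction, but through $\gamma_v(\lambda)=\tfrac12\sum_i 2^{-i}\log|1+\lambda+\cdots+\lambda^i|_v$: if $\lambda\neq1$ is a root of unity some factor $1+\lambda+\cdots+\lambda^i$ vanishes identically, while for $|\lambda|_v=1$ the needed lower bound on $|1-\lambda^{i+1}|_v$ comes from the product formula applied to the nonzero numbers $1-\lambda^{i+1}$ (Lemma \ref{product formula convergence}). So the skeleton of your plan is right, but as written the key objects and the summability verification are not yet correct or supplied.
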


\noindent
Note that the sets $\Preper^{\pm}_\lambda$ are invariant under the action of the Galois group $G$:  if $+1$ is preperiodic for $t_0$, then $+1$ is preperiodic for all $t$ in its Galois orbit, since these parameters are solutions of an equation of the form $f_t^n(+1) = f_t^m(+1)$, with coefficients in $\Q(\lambda)$.  A ``classical" setting of Theorem \ref{equidistribution} would be to take $S_n$ as the full set of solutions to the equation $f_t^n(+1) = f_t^m(+1)$, with any sequence $0 \leq m = m(n) < n$ as $n\to\infty$.

The equidistribution of Theorem \ref{equidistribution} for $\lambda = 0$ is well known.  It was first shown by Levin (in the classical sense of equidistribution) \cite{Levin:iteration}, and it was shown in the stronger (arithmetic) form by Baker and Hsia \cite[Theorem 8.15]{Baker:Hsia}.   In fact the equidistribution of Theorem \ref{equidistribution} holds at each place $v$ of the number field $\Q(\lambda)$, on an appropriately-defined Berkovich space $\P^{1,an}_{\C_v}$, for sets $S_n$ with canonical height tending to 0; see Theorem \ref{equidistribution at all places}.

\medskip\noindent{\bf Outline of the article.}
In Section \ref{bifurcation}, we introduce the families $\Per_1(\lambda)^{cm}$, the bifurcation loci within these curves, and the bifurcation measures $\mu^+_\lambda$ and $\mu^-_\lambda$.  We prove the independence of the critical points (Corollary \ref{independence}) and pose Question \ref{distinct supports} about the bifurcation loci.  In Section \ref{measures}, we give the proof of Theorem \ref{distinct measures}.  In Section \ref{homogeneous}, we prove that the measures $\mu^+_\lambda$ and $\mu^-_\lambda$ have continuous potentials on all of $\P^1$, assuming that $\lambda$ is not ``too close" to a root of unity (Theorem \ref{convergence}).  In Section \ref{non-archimedean}, we prove a non-archimedean convergence statement, analogous to Theorem \ref{convergence}, for $\lambda\in\Qbar$ that are not equal to roots of unity.  In Section \ref{sets}, we introduce the homogeneous bifurcation sets and compute their homogeneous capacities.  In Section \ref{equidistribution section}, we prove the needed equidistribution theorems, including Theorem \ref{equidistribution}.  In Section \ref{proof section}, we complete the proof of Theorem \ref{PCF maps}.

\medskip\noindent{\bf Acknowledgements.}  We would like to thank Ilia Binder, Dragos Ghioca, and Curt McMullen for helpful comments.  We also thank Suzanne Boyd for help with her program Dynamics Explorer, used to generate all images in this article.

\bigskip
\section{Bifurcation locus in the curve $\Per_1(\lambda)$}
\label{bifurcation}

The moduli space $\M_2$ is the space of conformal conjugacy classes of quadratic rational maps $f: \P^1_\C \to \P^1_\C$, where two maps are equivalent if they are conjugate by a M\"obius transformation; see \cite{Milnor:quad, Silverman}.
In this section, we provide some basic results about the bifurcation locus within the curves $\Per_1(\lambda)$ in $\M_2$.   By definition, $\Per_1(\lambda)$ is the set of conjugacy classes of quadratic rational maps with a fixed point of multiplier $\lambda$.  In Milnor's parameterization of $\M_2\iso \C^2$, using the symmetric functions in the three fixed point multipliers, each $\Per_1(\lambda)$ is a line \cite[Lemma 3.4]{Milnor:quad}.  For $\lambda =0$, $\Per_1(0)$ is the family of quadratic polynomials, usually parametrized by $f_t(z) = z^2 +t$ with $t\in\C$.  

\begin{remark}  A number of results have appeared since \cite{Milnor:quad} that address features of the bifurcations within $\Per_1(\lambda)$.  For example, when $|\lambda| < 1$, it is known that the bifurcation locus is homeomorphic to the boundary of the Mandelbrot set; this follows from the straightening theorem of \cite{Douady:Hubbard} (see the remark following Corollary 3.4 of \cite{Goldberg:Keen:shift}).  See \cite{Petersen:elliptic, Uhre:model, Buff:Epstein:Ecalle} for more in the setting of $\lambda$ a root of unity.  Berteloot and Gauthier have recently studied properties of the bifurcation current on $\M_2$ near infinity \cite{Berteloot:Gauthier}.
\end{remark}

\subsection{Bifurcations.}  \label{bifurcation definition}
Let $X$ be a complex manifold.  A {\em holomorphic family} of rational maps parametrized by $X$ is a holomorphic map 
	$$f: X\times\P^1\to \P^1.$$
We often write $f_t$ for the restriction $f(t, \cdot): \P^1\to \P^1$ for each $t\in X$.  A holomorphic family $\{f_t, t\in X\}$ of rational functions of degree $d\geq 2$ is {\em stable at $t_0\in X$} if the Julia sets $J(f_t)$ are moving holomorphically in a neighborhood of $t_0$.  In particular, $f_{t_0}|J(f_{t_0})$ is topologically conjugate to all nearby maps when restricted to their Julia sets (and the Julia sets are homeomorphic) \cite{Mane:Sad:Sullivan, McMullen:CDR}.  An equivalent characterization of stability, upon passing to a branched cover of $X$ where the critical points $c_1, c_2, \ldots, c_{2d-2}$ can be labelled holomorphically, is that the sequence of holomorphic maps
	$$\{t \mapsto f_t^n(c_i(t))\}$$
forms a normal family for each $i$ on some neighborhood of $t_0$.  The failure of normality can be quantified with the construction of a positive $(1,1)$-current on the parameter space $X$, as follows.

Suppose that we can express $f_t$ in homogeneous coordinates, as a holomorphic family
	$$F_t: \C^2 \to \C^2$$
for $t\in X$.  Assume we are given holomorphic functions $\tilde{c}_i: X\to \C^2\setminus\{(0,0)\}$ projecting to the critical points $c_i(t)\in\P^1$ of $f_t$.  We define the {\em bifurcation current} of $c_i$ on $X$ by
	$$T_i := dd^c \left(\lim_{n\to\infty} \frac{1}{d^n} \log \| F_t^n(\tilde{c}_i(t)) \| \right) .$$
The current vanishes if and only if the family $\{t \mapsto f_t^n(c_i(t))\}$ is normal.  In particular, $T_i=0$ for all $i$ if and only if the family is stable.  In fact, the family $F_t$ and the functions $\tilde{c}_i$ can always be defined locally on $X$, after passing to a branched cover where the critical points can be labelled, and the current $T_i$ is independent of the choice of $\tilde{c}_i$ and the homogenization $F_t$.  When the parameter space $X$ has dimension 1, note that the current $T_i$ is a measure (where $dd^c$ is simply the Laplacian), and we will refer to it as the {\em bifurcation measure}.  (See  \cite{D:current, D:lyap, Dujardin:Favre:critical}.)

The {\em bifurcation locus} is the set of parameters in $X$ where $f_t$ fails to be stable.  It coincides with the union of the supports of the bifurcation currents $T_1, \ldots, T_{2d-2}$.  The following lemma is a straightforward application of Montel's theorem; for a proof of Montel's theorem, see \cite[\S3]{Milnor:dynamics}.

\begin{lemma} \label{activity}
Let $f: X\times\P^1\to \P^1$ be a holomorphic family of rational functions of degree $>1$, with marked critical point $c: X\to\P^1$.  Let $T$ be the bifurcation current of $c$, and assume that $T\not= 0$.  Then there are infinitely many parameters $t\in X$ where $c(t)$ has finite orbit for $f_t$.
\end{lemma}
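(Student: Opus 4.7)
The plan is to apply Montel's three-point normality criterion in arbitrarily small neighborhoods of points of $\supp T$, using holomorphically varying periodic points of $f_t$ as targets. First, since $G(t)=\lim_n \tfrac{1}{d^n}\log\|F_t^n(\tilde c(t))\|$ is plurisubharmonic with $T=dd^c G$, a standard observation identifies $\supp T$ with the locus of non-normality of $\{t\mapsto f_t^n(c(t))\}$: if the family were normal on an open set $U$, subsequential limits would produce a pluriharmonic representative of $G$ on $U$, forcing $T|_U=0$. Thus $T\neq 0$ means the family fails to be normal in every neighborhood of any $t_0\in\supp T$.

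Second, fix such a $t_0$. Since $\deg f_{t_0}\geq 2$, the map $f_{t_0}$ has infinitely many periodic points; I pick three of them with multipliers different from $1$. By the implicit function theorem they extend to pairwise disjoint holomorphic sections $p_1,p_2,p_3\colon V\to\P^1$ of periodic points of $f_t$ on a neighborhood $V$ of $t_0$. A jointly holomorphic family of M\"obius maps $\phi_t$ on $V$ sending $p_1(t),p_2(t),p_3(t)$ to $0,1,\infty$ transforms the sequence into $t\mapsto \phi_t(f_t^n(c(t)))$, which remains non-normal on $V$ (post-composition with a biholomorphic family preserves normality). By classical Montel this transformed family cannot omit $\{0,1,\infty\}$, so there exist $t\in V$ and $n$ with $f_t^n(c(t))=p_j(t)$ for some $j$; such a $t$ has $c(t)$ landing on a periodic cycle, hence finite forward orbit under $f_t$.

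Finally, to upgrade from one parameter to infinitely many, I argue by contradiction. If only finitely many $t_1,\ldots,t_k\in X$ had $c(t)$ with finite orbit, then applying the above in a small neighborhood of some $t_0\in\supp T$ disjoint from $\{t_1,\ldots,t_k\}$ would produce yet another such parameter. To ensure such a $t_0$ exists, note that $G$ is plurisubharmonic and locally bounded below, so if $dd^c G$ were supported on a finite set the removable singularity theorem would force $G$ to extend globally pluriharmonically and hence $T=0$. Therefore $\supp T$ is infinite and the desired contradiction is reached.

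The principal subtlety lies in this last step: ruling out the possibility that $\supp T$ is a finite set, which rests on the absence of logarithmic poles in the escape-rate potential $G$. The Montel portion itself is the classical activity principle going back to Ma\~{n}\'{e}--Sad--Sullivan.
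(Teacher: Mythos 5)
Your proof is correct, and its core is the same activity argument as the paper's: persistently-defined periodic points (via the implicit function theorem) serve as three holomorphically moving omitted values, and Montel forces the critical orbit to hit one of them in any neighborhood of a point of $\supp T$. Where you genuinely diverge is in extracting \emph{infinitely many} parameters. The paper chooses a single repelling cycle of period $\geq 3$ that is disjoint from the forward orbit of $c(t_0)$; this guarantees the parameter produced is different from $t_0$, and shrinking the neighborhood around $t_0$ then yields an infinite sequence of preperiodic parameters accumulating at $t_0$, with no global input needed. You instead argue by contradiction from finiteness, which forces you to rule out that $\supp T$ is contained in a finite set; your removable-singularity argument for this is fine, but it quietly uses that the escape-rate potential $G$ is locally bounded (indeed continuous, by Hubbard--Papadopol and Forn\ae ss--Sibony, a fact the paper itself invokes later in Section 4), whereas the paper's choice of cycle makes this extra step unnecessary. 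Both routes are valid: the paper's is slightly more economical and localizes all the preperiodic parameters near a single $t_0$, while yours buys a simpler Montel step (any three periodic points with multiplier $\neq 1$ will do, with no condition relative to the critical orbit) at the cost of the auxiliary fact about $\supp T$.
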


\proof
Fix $t_0\in \supp T$.  Choose any repelling periodic cycle for $f_{t_0}$ of period $\geq 3$ that is not in the forward orbit of $c(t_0)$.  By the Implicit Function Theorem, the repelling cycle persists in a neighborhood $U$ of $t_0$.  If the orbit of the critical point $c(t)$ were disjoint from the cycle for all $t\in U$, then Montel's Theorem would imply that $\{t\mapsto f^n_t(c(t))\}$ forms a normal family on $U$.  This contradicts the fact that $t_0$ lies in the support of $T$. Consequently, there is a parameter $t_1\in U\setminus\{t_0\}$ where $c(t_1)$ is preperiodic for $f_{t_1}$.  Shrinking the neighborhood $U$, we obtain an infinite sequence of such parameters converging to $t_0$.
\qed

\begin{example}  \label{polynomials}
For the family of quadratic polynomials, $f_t(z) = z^2 + t$, there is only one critical point (at $z=0$) inducing bifurcations.  The associated bifurcation current defines a measure on the parameter space ($t\in\C$).  It is equal, up to a normalization factor, to the harmonic measure supported on the boundary of the Mandelbrot set $\mathcal{M}$ \cite[Example 6.1]{D:current}.  In this case, there is no need to pass to homogeneous coordinates; a potential function for the normalized bifurcation measure is given by
\begin{equation}  \label{quadratic G}
	G_{\mathcal{M}}(t) = \lim_{n\to\infty} \frac{1}{2^n} \log^+ |f_t^n(t)|
\end{equation}
for $t\in\C$.
\end{example}

\subsection{The bifurcation locus in the critically-marked curve.}
\label{definitions}
Fix a complex number $\lambda \not=0$, and set
	$$f_{\lambda,t}(z) = \frac{\lambda z}{z^2 + tz + 1}$$
for all $t\in\C$.  (We will often write $f_t$ for $f_{\lambda,t}$ when the dependence on $\lambda$ is clear.)  Then $f_t$ has critical points at $c_+(t) = +1$ and $c_-(t) = -1$ for all $t\in\C$.  Since $f_t$ is conjugate to $f_{-t}$ by $z\mapsto -z$, the family $f_t$ parametrizes a degree-2 branched cover of the curve $\Per_1(\lambda) \subset \M_2$, which we denote by $\Per_1(\lambda)^{cm}$; the $cm$ in the superscript stands for ``critically marked."  

We define the bifurcation currents associated to the critical points $c_+$ and $c_-$ as in \S\ref{bifurcation definition}.  As the parameter space is 1-dimensional, the currents are in fact measures; we denote these bifurcation measures by $\mu^+_\lambda$ and $\mu^-_\lambda$.  The supports will be denoted by
	$$\Bif^+ = \supp \mu^+_\lambda \qquad \mbox{and} \qquad \Bif^- = \supp \mu^-_\lambda.$$
These bifurcation measures have globally-defined potential functions.  We set
\begin{equation} \label{F_t}
	F_t(z_1, z_2) =  (\lambda z_1 z_2, z_1^2 + t z_1 z_2 +  z_2^2)
\end{equation}
and
\begin{equation} \label{H}
	H_\lambda^{\pm}(t) = \lim_{n\to\infty} \frac{1}{2^n} \log \| F_t^n(\pm 1, 1) \|,
\end{equation}
so that
	$$\mu^+_\lambda = \frac{1}{2\pi} \Delta H^+_\lambda \qquad \mbox{and} \qquad \mu^-_\lambda = \frac{1}{2\pi} \Delta H^-_\lambda.$$
Since $f_{-t}(z) = -f_t(-z)$, we see that
	$$H_\lambda^-(t) = H_\lambda^+(-t)$$
and $\Bif^- = -\Bif^+$.

The following proposition follows from the observations of Milnor in \cite{Milnor:quad}.

\begin{prop} \label{compactness}
The bifurcation loci $\Bif^+$ and $\Bif^-$ are nonempty for all $\lambda \not=0$.  They are compact in $\Per_1(\lambda)^{cm}$ if and only if $|\lambda| \not=1$ or $\lambda = 1$.

Moreover, when the bifurcation locus $\Bif = \Bif^+\cup \Bif^-$ is compact, the unbounded stable component consists of maps for which both critical points lie in the basin of an attracting (or parabolic, in the case of $\lambda=1$)  fixed point.
\end{prop}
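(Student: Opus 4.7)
The strategy combines an asymptotic analysis of $H^\pm_\lambda$ at $t=\infty$ with a rescaling that extracts the dynamics of $f_t$ for $|t|$ large. For the first, I would induct on $n$ to track $F_t^n(\pm 1,1)$ as a polynomial in $t$, using the recursion $F_t(z_1,z_2)=(\lambda z_1 z_2,\,z_1^2+tz_1z_2+z_2^2)$. Generically in $\lambda$, the second coordinate has degree $2^{n-1}$ in $t$ with nonvanishing leading coefficient, yielding
$$H^\pm_\lambda(t)=\tfrac{1}{2}\log|t|+O(1)\quad\text{as } |t|\to\infty.$$
If $\mu^\pm_\lambda$ vanished then $H^\pm_\lambda$ would be harmonic on $\C$ with logarithmic growth at infinity, which is impossible; hence $\mu^\pm_\lambda\neq 0$ and $\Bif^\pm$ is nonempty.

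For compactness, I pass to the rescaling $w=tz$ with $s=1/t$; then $g_s(w):=t\cdot f_t(w/t)=\lambda w/(s^2 w^2+w+1)$ extends holomorphically to $s=0$ with limit the M\"obius transformation $g_0(w)=\lambda w/(w+1)$, whose fixed points are $w=0$ (multiplier $\lambda$) and $w=\lambda-1$ (multiplier $1/\lambda$). The critical images $f_t(\pm 1)=\pm\lambda/(t\pm 2)$ correspond in the rescaled coordinate to $w_\pm(t)=\lambda\mp 2\lambda s+O(s^2)$, both converging to $w=\lambda$ as $s\to 0$. When $|\lambda|<1$, the fixed point $w=0$ of $g_s$ is attracting for every $s$ (since $g_s'(0)=\lambda$ identically), its basin for $g_0$ is $\P^1\setminus\{\lambda-1\}$, which contains $w=\lambda$; by continuity of basins under perturbation, $w_\pm(t)$ lies in the immediate basin for $|s|$ small, so $f_t(\pm 1)$ lies in the basin of the attracting fixed point $z=0$ of $f_t$ for $|t|$ large. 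When $|\lambda|>1$, the implicit function theorem perturbs the attracting fixed point $w=\lambda-1$ of $g_0$ to an attracting fixed point of $g_s$, whose basin contains $w_\pm(t)$ for $s$ small; equivalently, $f_t$ has an attracting fixed point near $z=(\lambda-1)/t$ whose basin captures $f_t(\pm 1)$. When $\lambda=1$, the two limit fixed points of $g_0$ collide into a parabolic fixed point at $w=0$ that persists for $g_s$ (since $g_s'(0)=1$ identically), and a Leau--Fatou petal argument shows the parabolic basin contains $w_\pm(t)\to 1$ uniformly in $s$. In each case both critical orbits land in the basin of a single attracting or parabolic fixed point of $f_t$ for $|t|$ large, so $\{t\mapsto f_t^n(\pm 1)\}$ is normal at $\infty$, $H^\pm_\lambda$ is harmonic there, and $\Bif^\pm$ is compact. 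Since ``both critical points in the immediate basin of the marked fixed point'' is an open and closed condition on the stable set, this also identifies the unbounded stable component.

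For $|\lambda|=1$, $\lambda\neq 1$, the limit $g_0$ is an elliptic M\"obius with two neutral non-parabolic fixed points and no attracting structure to capture $w=\lambda$. Noncompactness of $\Bif^\pm$ should follow from a direct non-normality argument at $t=\infty$: for large $|t|$ the family $f_t$ exhibits fine bifurcations near the neutral fixed point $z=0$ (whose multiplier is pinned at $\lambda$, but whose higher-order local form varies with $t$), and combining this with Montel's theorem in the spirit of Lemma~\ref{activity} produces an infinite sequence $t_n\to\infty$ in $\Bif^\pm$.

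The main technical obstacle is the parabolic case $\lambda=1$, where the parabolic basin of $g_s$ must be tracked uniformly as $s\to 0$ despite parabolic implosion. Since the multiplier $g_s'(0)=1$ is preserved, the parabolic fixed point does not split, and a Fatou coordinate argument for the family $g_s$ should yield an attracting petal of definite size containing $w=1$ for $|s|$ small; nevertheless, the critical images $w_\pm(t)$ sit at the boundary scale of the parabolic basin, so verifying their capture requires a careful quantitative Leau--Fatou analysis.
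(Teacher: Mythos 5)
Your compactness argument for $|\lambda|\neq 1$ is essentially the paper's, just written in the rescaled chart $w=tz$ (the paper instead normalizes the fixed points to $0,1,\infty$, or for $\lambda=1$ conjugates by $1/(tz)$, which is exactly your $u=1/w$), and it goes through; also, the $\lambda=1$ case is easier than you fear: in the chart $u=1/w$ your family becomes $u\mapsto u+1+s^2/u$, the half-plane $\{\Re u>R\}$ is forward invariant with orbits tending to the parabolic point for all small $s$, and the critical values, which tend to $u=1$, enter it after finitely many iterates by locally uniform convergence to $u\mapsto u+1$ away from $0$; no quantitative Leau--Fatou or implosion analysis is needed because the multiplier at the parabolic point is identically $1$ and the point does not move. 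The genuine gaps are in the other two assertions of the proposition. First, nonemptiness must hold for \emph{every} $\lambda\neq 0$, but your argument via $H^\pm_\lambda(t)=\tfrac12\log|t|+O(1)$ is only ``generic'': the leading coefficient of the second coordinate of $F^n_t(\pm1,1)$ is $C_n(\lambda)$, a product of factors $1+\lambda+\cdots+\lambda^j$, so it vanishes identically when $\lambda\neq1$ is a root of unity, and for $|\lambda|=1$ of Liouville type the error term is governed by $\gamma(\lambda)=\tfrac12\sum_i 2^{-i}\log|1+\lambda+\cdots+\lambda^i|$, which may diverge (this is precisely the hypothesis of Theorem \ref{convergence}). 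So your growth estimate is not established on the unit circle, which is exactly where nonemptiness is not already implied by your other arguments. The paper avoids this entirely with an elementary observation valid for all $\lambda\neq0$: at $t=\pm2\sqrt{1-\lambda}$ the two fixed points $Z_\pm(t)$ collide, producing a fixed point of multiplier $1$ that is not persistently neutral, so these parameters lie in the bifurcation locus by the characterization of stability, and $\Bif^-=-\Bif^+$ gives both loci nonempty.

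Second, the ``only if'' direction (noncompactness for $|\lambda|=1$, $\lambda\neq1$) is not proved: you offer only the hope that ``fine bifurcations near the neutral fixed point $z=0$'' plus Montel in the spirit of Lemma \ref{activity} produce $t_n\to\infty$ in $\Bif^\pm$. But $z=0$ has \emph{constant} multiplier $\lambda$ along the family, so by itself it forces no non-normality at any particular parameter near $t=\infty$; the mechanism that actually works is the \emph{other} fixed point: the paper computes $f_t'(Z_+(t))=\lambda^{-1}\bigl(1-(1-\lambda)^2/t^2+O(t^{-4})\bigr)$ and observes that for every large $R$ the modulus of this multiplier crosses $1$ somewhere on the circle $|t|=R$, yielding non-persistently neutral parameters arbitrarily far out, hence unbounded bifurcation loci. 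Without an argument of this kind your proof establishes only one direction of the ``if and only if.'' The closing remark identifying the unbounded stable component (openness and closedness, within the stable set, of the condition that both critical points are attracted to the persistent fixed point) is fine and matches what the paper does implicitly.
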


\proof
The three fixed points of $f_t$ lie at $0$ and
	$$Z_{\pm}(t)  = \frac{-t \pm \sqrt{t^2-4(1-\lambda)}}{2}.$$
The set of fixed point multipliers is
	$$\{ \lambda, (1- Z_\pm(t)^2)/\lambda \}.$$
For each fixed $\lambda$, there are well-defined branches of the square root for $|t| >>0$ so that $Z_\pm$ define analytic functions near infinity, with $Z_+(t) \to 0$ and $Z_-(t) \to \infty$ as $t\to \infty$.  The set of fixed point multipliers converges to $\{\lambda, 1/\lambda, \infty\}$ as $t\to \infty$.  (Compare \cite[Lemma 4.1]{Milnor:quad}.)

We first observe that $\Bif^+$ and $\Bif^-$ are nonempty.  Note that the two fixed points $Z_+(t)$ and $Z_-(t)$ must collide at $t = \pm 2 \sqrt{1-\lambda}$.  Consequently, their multipliers are 1 at that point, while they cannot be persistently equal to 1.  By the characterizations of stability \cite[Theorem 4.2]{McMullen:CDR}, the parameters $t = \pm 2 \sqrt{1-\lambda}$ will lie in the bifurcation locus.

Suppose that $|\lambda|=1$ with $\lambda \not=1$.  Then 
	$$f_t'(Z_+(t)) =  \frac{1}{\lambda} \left( 1 - \frac{(1-\lambda)^2}{t^2} + O\left(\frac{1}{t^4}\right) \right)$$
for $t$ large.  Fixing any large value of $R>0$ and letting the argument of $t=Re^{i\theta}$ vary in $[0,2\pi]$, the absolute value of $(1 - (1-\lambda)^2/t^2)$ will fluctuate around 1.  Consequently, the multiplier $f_t'(Z_+(t))$ will have absolute value 1 for some parameter $t$ with $|t| =R$, for all sufficiently large $R$.   Again by the characterizations of stability, all such parameters will lie in the bifurcation locus.  Consequently, $\Bif^+$ and $\Bif^-$ are unbounded.

For $|\lambda| \not=1$, $f_t$ has an attracting fixed point (of multiplier $\lambda$ or $\approx 1/\lambda$) for all $t$ large.  Both critical points will lie in its basin of attraction for all $t$ large, demonstrating stability of the family $f_t$.  To see this, we may place the three fixed points of $f_t$ at $\{0,1,\infty\}$ so that $f_t$ is conjugate to the rational function
	$$g_t(z) = z \frac{(1-\lambda)z + \lambda(1-\beta)}{\beta(1-\lambda)z + 1-\beta}.$$
where $\beta = \beta(t) = \beta(-t) \approx 1/\lambda$ is the multiplier of the fixed point at $\infty$.  In this form, $g_t$ will converge (locally uniformly on $\mathbb{\widehat{C}}\setminus\{1\}$) to the linear map $z\mapsto \lambda z$ as $t\to \infty$.   In particular, there is a neighborhood $U$ containing the attracting fixed point and the point $z = \lambda$ mapped compactly inside itself by $g_t$ for all $t$ large.  On the other hand, we can explicitly compute the critical values $v_+(t), v_-(t)$ of $g_t$ and determine that $\lim_{t \to\infty} v_\pm(t) = \lambda$.  Consequently, the critical points lie in the basin of attraction for all $t$ large enough, and the bifurcation locus must be bounded.

For $\lambda = 1$, it is convenient to conjugate $f_t$ by $1/(tz)$, to express it in the form
	$$g_t(z) = z + 1 + \frac{1}{t^2 z}$$
with a parabolic fixed point at $z=\infty$.  In these coordinates, $g_t$ converges (locally uniformly on $\mathbb{\widehat{C}}\setminus\{0\}$) to the translation $z\mapsto z+1$ as $t\to \infty$.  Again, we compute explicitly the critical values of $g_t$ and their limit as $t\to \infty$; in this case, they converge to the point $z=1$.  As such, they both lie in the basin of the parabolic fixed point for $t$ large.
\qed

\subsection{Comparing the two bifurcation loci.}   We begin with a simple observation.

\begin{lemma}  \label{bifset}
For $\Re \lambda>1$, then $\Bif^+$ and $\Bif^-$ intersect only at the two points $t = \pm 2\sqrt{1-\lambda}$.    For $|\lambda|<1$, the sets $\Bif^+$ and $\Bif^-$ are disjoint.
\end{lemma}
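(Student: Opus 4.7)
I plan to use the standard stability principle: if a critical point $c(t_0)\in\{\pm 1\}$ lies in the immediate basin of an attracting cycle of $f_{t_0}$, then the orbit of $c$ under $f_t$ remains close to the cycle for $t$ near $t_0$ (attracting cycles persist under small perturbation), so the family $\{t\mapsto f_t^n(c(t))\}$ is normal at $t_0$ and hence $t_0$ is not in the bifurcation locus for $c$. Fatou's theorem that every attracting cycle contains a critical point in its immediate basin then implies: if $t_0\in\Bif^+\cap\Bif^-$, then $f_{t_0}$ has no attracting cycle, so in particular the fixed-point multipliers $\lambda$ and $\mu_\pm(t_0)=(1-Z_\pm(t_0)^2)/\lambda$ all satisfy $|\cdot|\geq 1$. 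For $|\lambda|<1$, the fixed point $0$ is attracting for every $t\in\C$, so the principle immediately forces $\Bif^+\cap\Bif^-=\emptyset$.

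\textbf{Case $\Re\lambda>1$.} Here $|\lambda|>1$. The key tool is Milnor's identity for the three fixed-point multipliers of any quadratic rational map:
\[
\lambda\mu_+\mu_-=\lambda+\mu_++\mu_--2.
\]
The plan is to show that $|\mu_+|=|\mu_-|=1$ at $t_0\in\Bif^+\cap\Bif^-$, and then use the identity to pin down $t_0$. Writing $\mu_\pm=e^{i\alpha_\pm}$ and setting $\phi=(\alpha_++\alpha_-)/2$, $\psi=(\alpha_+-\alpha_-)/2$, a direct substitution into Milnor's identity yields
\[
\lambda=1+i\,\frac{\cos\phi-\cos\psi}{\sin\phi}
\]
whenever $\sin\phi\neq 0$, so $\Re\lambda=1$, contradicting $\Re\lambda>1$. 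Hence $\sin\phi=0$, i.e., $\mu_+\mu_-=1$; combined with $|\mu_\pm|=1$ and Milnor's identity this forces $\mu_+=\mu_-=1$, i.e., the two fixed points $Z_\pm$ merge ($t^2-4(1-\lambda)=0$), giving $t=\pm 2\sqrt{1-\lambda}$. The reverse inclusion $\{\pm 2\sqrt{1-\lambda}\}\subseteq\Bif^+\cap\Bif^-$ follows from the local analysis of parabolic implosion at these degenerations together with the same Milnor computation, which prevents any competing attracting cycle from capturing either critical point persistently through the parabolic parameter.

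\textbf{Main obstacle.} The Milnor calculation above was carried out under the assumption $|\mu_+|=|\mu_-|=1$, whereas the basin principle only supplies $|\mu_\pm|\geq 1$. Ruling out the strict case---both $Z_\pm$ repelling, with any non-repelling behavior forced to come from a higher-period cycle---is the technical heart of the $\Re\lambda>1$ direction. Closing this gap likely requires combining the Fatou--Shishikura bound (at most two non-repelling cycles for a quadratic rational map) with the compactness of $\Bif^\pm$ from Proposition~\ref{compactness}, to exclude such configurations outside of $\{\pm 2\sqrt{1-\lambda}\}$ via a finer complex-dynamical argument.
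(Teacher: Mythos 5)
Your $|\lambda|<1$ case coincides with the paper's argument and is fine, but in the case $\Re\lambda>1$ there is a genuine gap, which you flag yourself: at a parameter $t_0\in\Bif^+\cap\Bif^-$ the absence of attracting cycles only gives $|\mu_\pm(t_0)|\geq 1$ for the two nonzero fixed points, while your computation with Milnor's relation is carried out only under the assumption $|\mu_+|=|\mu_-|=1$. The configuration in which both $Z_\pm$ are strictly repelling is never excluded, and your proposed remedy (Fatou--Shishikura plus compactness of $\Bif^\pm$) is not the right tool and is not carried out. The paper closes exactly this point with the holomorphic fixed-point index formula, which is the source of the identity you are using: for $t\neq\pm2\sqrt{1-\lambda}$ no fixed-point multiplier equals $1$, so $\frac{1}{1-\lambda}+\frac{1}{1-\mu_+}+\frac{1}{1-\mu_-}=1$; since $\Re\lambda>1$ gives $\Re\frac{1}{1-\lambda}<0$, while $\Re\frac{1}{1-\mu}\leq\frac{1}{2}$ for every $\mu\neq 1$ with $|\mu|\geq 1$, at least one of $\mu_\pm$ must be attracting. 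This handles $|\mu_\pm|\geq 1$ uniformly, with no case division, and then Fatou's theorem plus your own passivity principle yields $\Bif^+\cap\Bif^-\subset\{\pm2\sqrt{1-\lambda}\}$ immediately. So the elementary half-plane property of $\mu\mapsto 1/(1-\mu)$ is all that was missing; without it your argument does not prove the stated inclusion.

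The reverse inclusion in your write-up is also too thin: invoking ``parabolic implosion together with the same Milnor computation'' does not address the possibility that at $t=\pm2\sqrt{1-\lambda}$ the other critical point is captured by an attracting cycle of higher period (Fatou--Shishikura permits one more non-repelling cycle besides the parabolic fixed point), in which case that critical point would be passive there. The paper avoids any local analysis at the parabolic parameters: it computes that at $t=0$ the nonzero fixed multipliers are both $-1+2/\lambda$, hence attracting when $\Re\lambda>1$, so $f_0$ has two attracting fixed points and $t=0$ cannot lie in the unbounded stable component, where the attracting fixed point is unique by Proposition~\ref{compactness}; consequently $\Bif^+\cup\Bif^-$ separates $t=0$ from $\infty$, and a topological argument combined with the symmetry $\Bif^-=-\Bif^+$ (which makes $\Bif^+\cap\Bif^-$ invariant under $t\mapsto -t$) forces the intersection to consist of at least two points, hence to equal $\{\pm2\sqrt{1-\lambda}\}$ given the inclusion already proved. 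You would need either this global argument or a genuine proof that both critical points are active at the parabolic parameters; as written, neither direction of the $\Re\lambda>1$ statement is established.
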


\begin{figure} [b]
\includegraphics[width=2.05in]{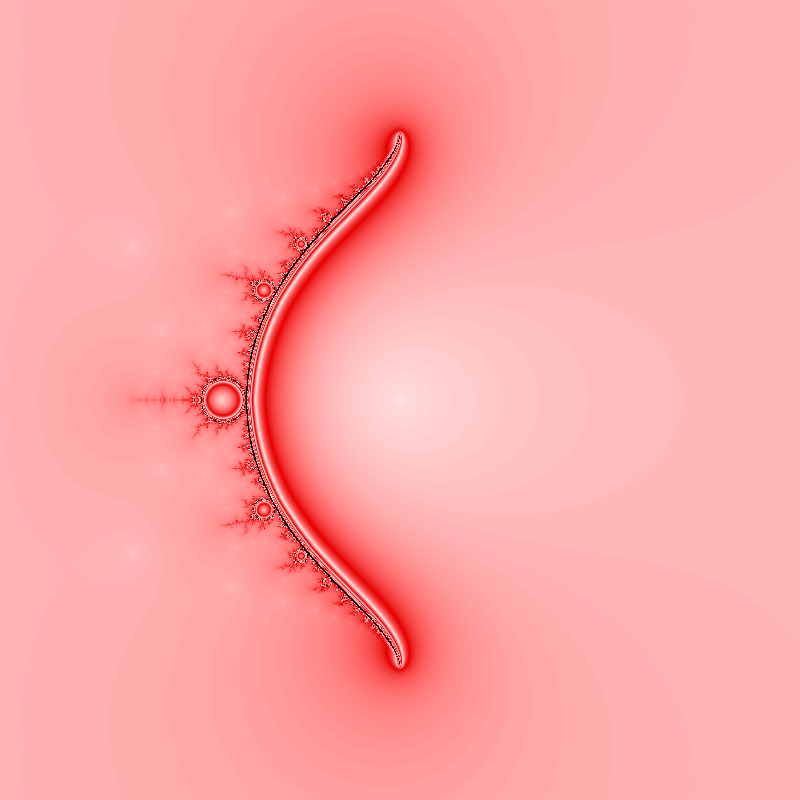}
\includegraphics[width=2.05in]{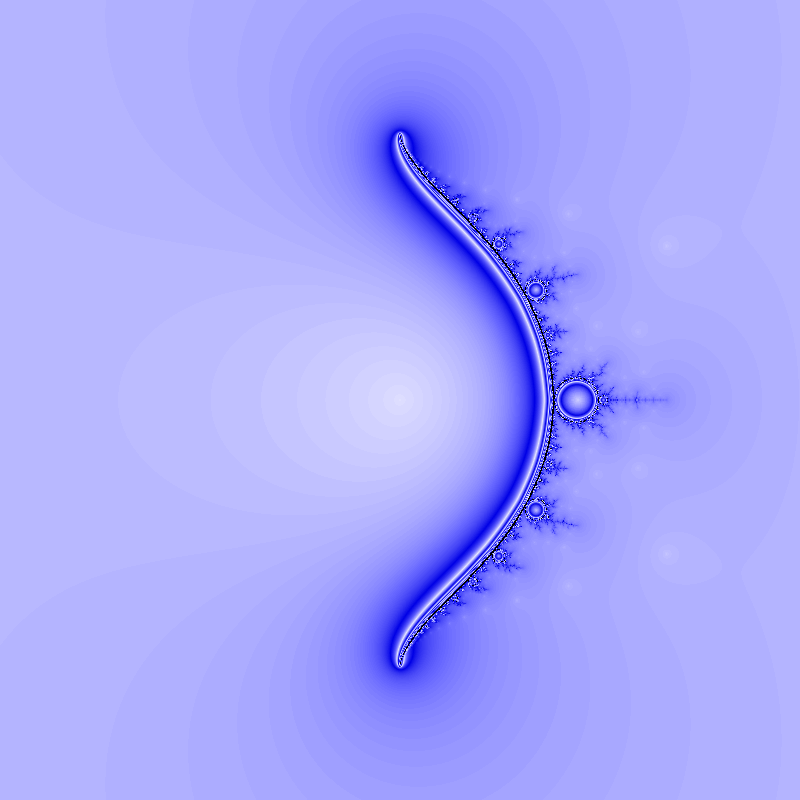}
\includegraphics[width=2.05in]{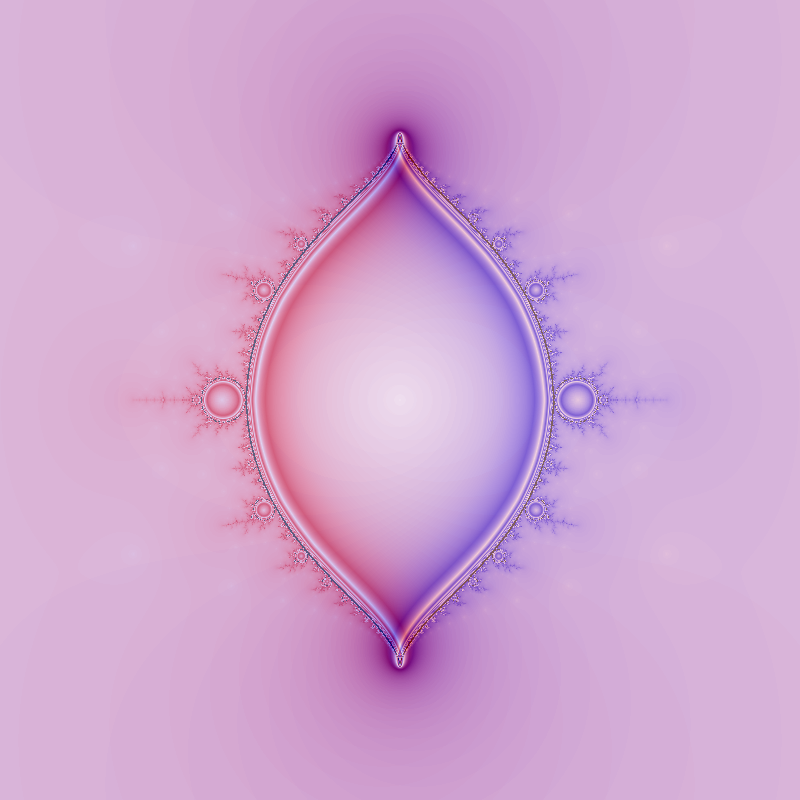}

\caption{ \small The bifurcation loci in $\Per_1(2)^{cm}$.  At left, an illustration of $\Bif^+$ where $|\Re t\, |, |\Im t\,| \leq 3$; the color shading records a rate of convergence of the critical point $c_+$ to an attracting cycle.   In the middle, $\Bif^-$ in the same region.  At right, the two images superimposed. By Lemma \ref{bifset}, $\Bif^+\cap\Bif^- = \{2i, -2i\}$.}
\label{L=2}
\end{figure}

 \proof
For $\Re \lambda > 1$ and $t\not= \pm 2\sqrt{1-\lambda}$, $f_t$ has at least one attracting fixed point.  The proof is immediate from the index formula for fixed point multipliers  (see \cite{Milnor:dynamics}).  For all such $t$ then, at least one of the critical points must be stable.  This shows that $\Bif^+\cap \Bif^- \subset \{\pm 2\sqrt{1-\lambda}\}$.  A straightforward calculation shows that the fixed point multipliers at $t=0$ are $\{\lambda, -1 + 2/\lambda, -1 + 2/\lambda\}$, so $f_0$ must have two distinct attracting fixed points whenever $\Re\lambda>1$.  Consequently, $t=0$ cannot lie in the unbounded stable component (where there is a unique attracting fixed point by Proposition \ref{compactness}).  For topological reasons, then, and since $\Bif^+ = -\Bif^-$, the intersection of $\Bif^+$ and $\Bif^-$ must consist of at least two points, concluding the proof that $\Bif^+ \cap \Bif^- = \{\pm 2\sqrt{1-\lambda}\}$.

For $|\lambda|<1$, the point $0$ is an attracting fixed point for all $t$, and its immediate attracting basin must contain at least one critical point.  Thus, for all $t$, at least one critical point remains in an attracting basin under perturbation, and so it is stable; this implies that $\Bif^+\cap \Bif^-=\emptyset$.
\qed

\bigskip
For certain values of $\lambda$, the bifurcation loci $\Bif^\pm$ are remarkably similar.  Though $\Bif^+$ does not appear to be equal to $\Bif^-$ for any value of $\lambda$, the differences can be subtle.  We include illustrations of $\Bif^\pm$ in $\Per_1(\lambda)^{cm}$ for three values of $\lambda$ in Figures \ref{L=2}-\ref{L=-4}.  In Figure \ref{L=-4 measures}, we illustrate the distribution of the parameters where the critical points are periodic, with $\lambda = -4$; these parameters converge to the bifurcation measures by Theorem \ref{equidistribution}.  Theorem \ref{distinct measures} states that the two measures $\mu^+_\lambda$ and $\mu^-_\lambda$ are distinct for all $\lambda$.

\begin{question} \label{distinct supports}
Are the bifurcation loci $\Bif^+$ and $\Bif^-$ distinct in $\Per_1(\lambda)^{cm}$ for all $\lambda$?  What explains their near-coincidence for parameters such as $\lambda = -4$?
\end{question}

\begin{figure} [h]
\includegraphics[width=2.05in]{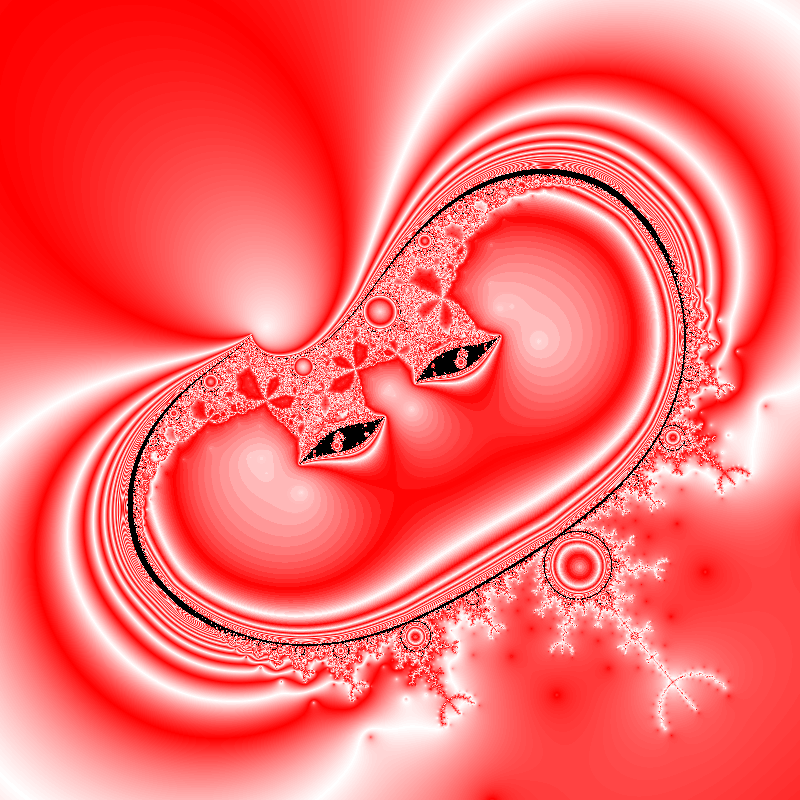}
\includegraphics[width=2.05in]{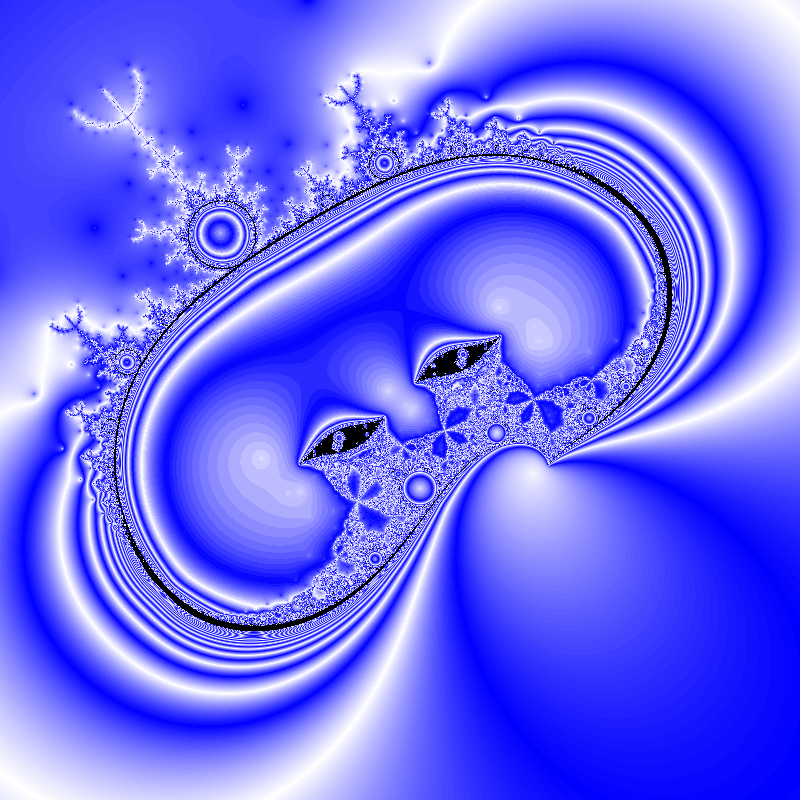}
\includegraphics[width=2.05in]{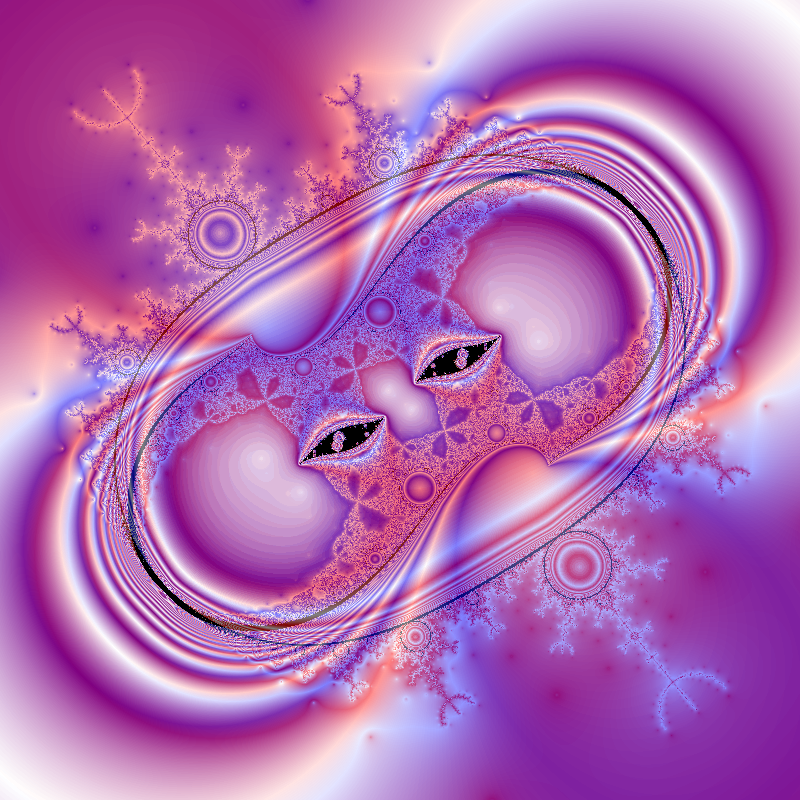}

\caption{ \small The bifurcation loci in $\Per_1(1.1i)^{cm}$.  At left, an illustration of $\Bif^+$ where $|\Re t\, |, |\Im t\,| \leq 6$; the color shading records a rate of convergence of the critical point $c_+$ to an attracting cycle.   In the middle, $\Bif^-$ in the same region.    At right, the two images superimposed. }
\label{L=1.1i}
\end{figure}

\begin{figure} [h]
\includegraphics[width=2.05in]{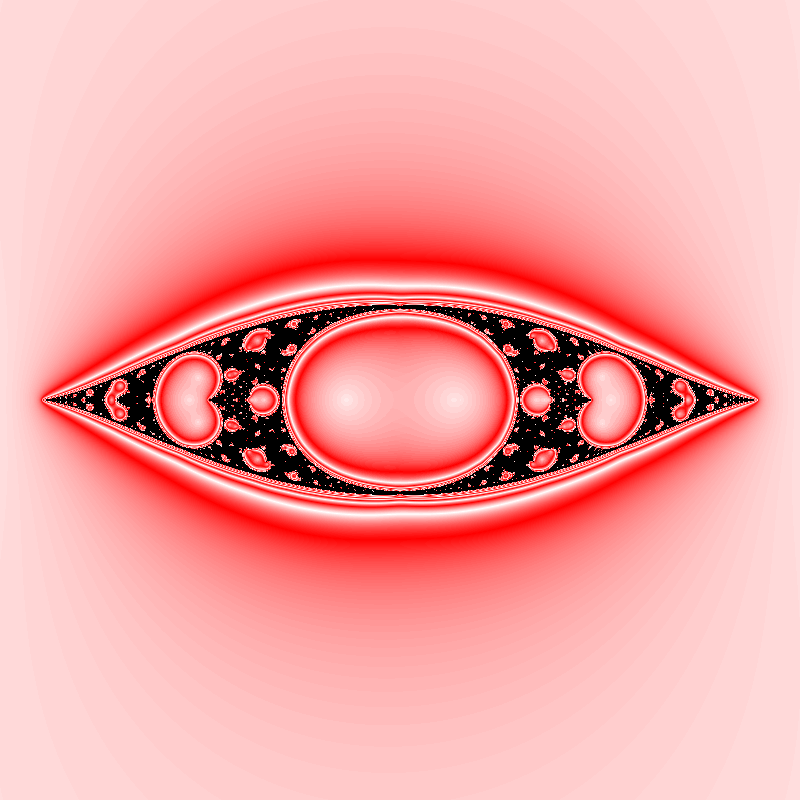}
\includegraphics[width=2.05in]{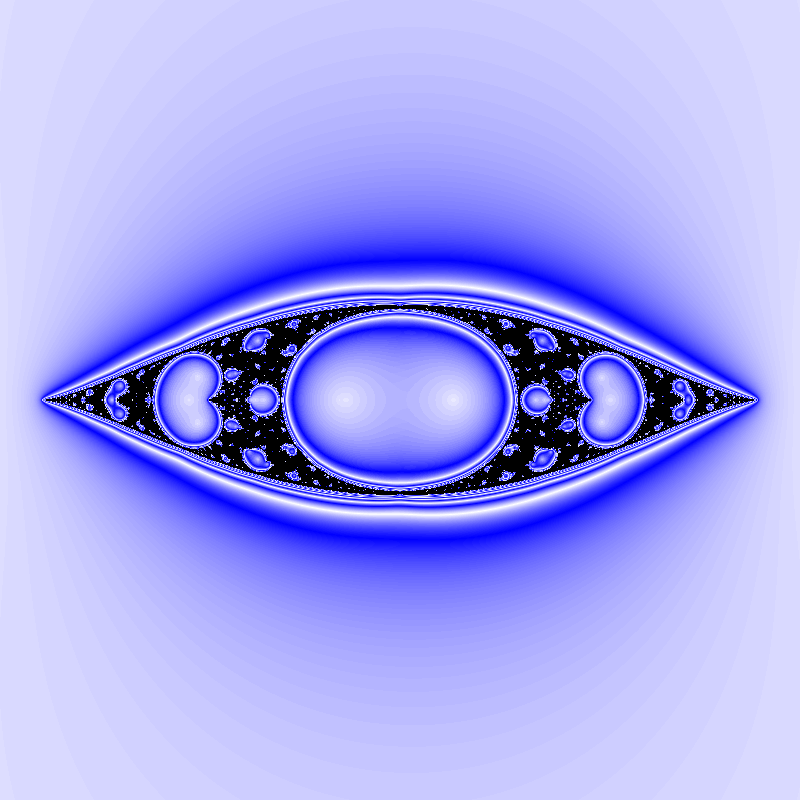}
\includegraphics[width=2.05in]{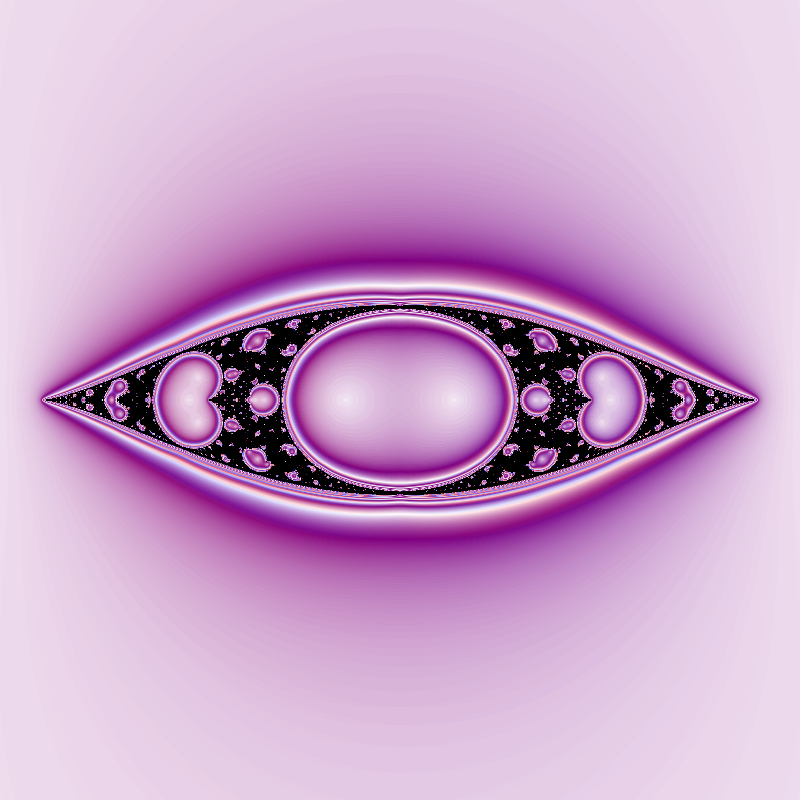}

\caption{ \small The bifurcation loci in $\Per_1(-4)^{cm}$.  At left, an illustration of $\Bif^+$ where $|\Re t\, |, |\Im t\,| \leq 5$; the color shading records a rate of convergence of the critical point $c_+$ to an attracting cycle.   In the middle, $\Bif^-$ in the same region.   At right, the two images superimposed.}
\label{L=-4}
\end{figure}

\begin{figure} [h]
\includegraphics[width=2.7in]{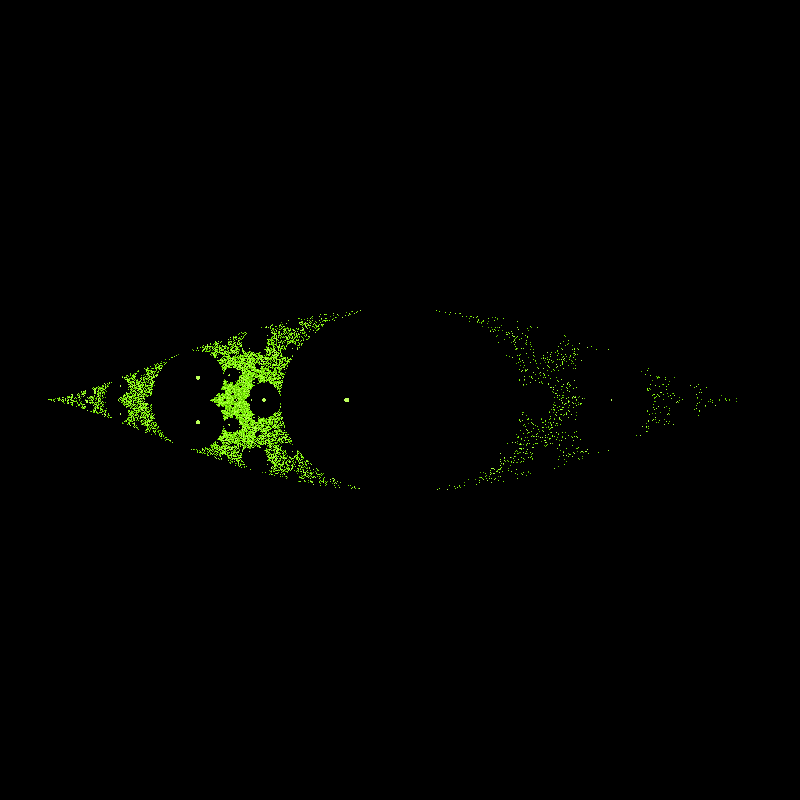}
\includegraphics[width=2.7in]{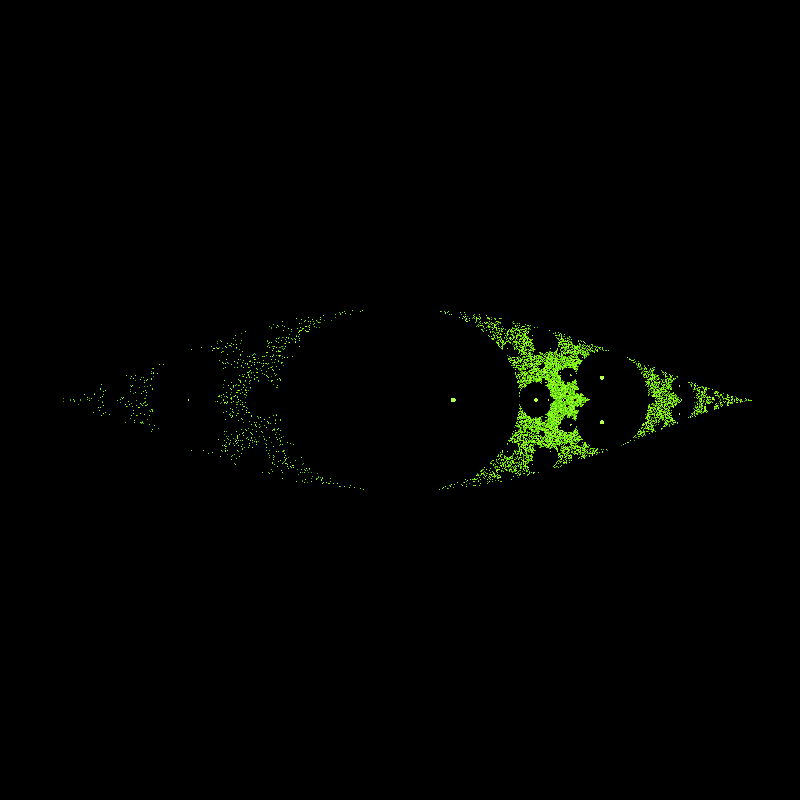}

\caption{ \small At left, a plot of parameters $t$ such that $f_t^n(+1) = +1$ in $\Per_1(-4)^{cm}$, with $n\leq 5000$.  By Theorem \ref{equidistribution}, these parameters are equidistributed with respect to $\mu^+_\lambda$ as $n\to \infty$.  At right, the parameters where critical point $-1$ is periodic, equidistributed with respect to $\mu^-_\lambda$.  }
\label{L=-4 measures}
\end{figure}

\subsection{Dynamical independence of the critical points.}  We conclude this section with the observation that the two critical points $c_+ = +1$ and $c_- = -1$ must be dynamically independent along $\Per_1(\lambda)^{cm}$.  We define,
	$$\Preper^\pm_\lambda = \{t\in\Per_1(\lambda)^{cm}:  \pm 1 \mbox{ has finite forward orbit for } f_t\}.$$

\begin{prop}  \label{no synchrony}
For all $\lambda\in\C$, we have
	$$\Preper^+_\lambda \not= \Preper^-_\lambda$$
in $\Per_1(\lambda)^{cm}$.
\end{prop}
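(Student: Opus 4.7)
The plan is to argue by contradiction. First, I dispose of $\lambda=0$ quickly: on $\Per_1(0)^{cm}$, parametrized by the quadratic polynomial family $z^2+t$, the critical point at infinity is a persistent super-attracting fixed point, so $\Preper^-_0$ is all of $\Per_1(0)^{cm}$, whereas $\Preper^+_0$ is strictly smaller (parameters outside the Mandelbrot set have $0$ escaping to infinity).

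For $\lambda\neq 0$, work with $f_t(z)=\lambda z/(z^2+tz+1)$ and suppose toward contradiction that $\Preper^+_\lambda=\Preper^-_\lambda$. The key point is a cardinality comparison: $\Bif^+_\lambda=\supp\mu^+_\lambda$ is nonempty by Proposition \ref{compactness} and uncountable with no atoms (since the potential $H^+_\lambda$ is continuous), whereas $\Preper^-_\lambda$ is countable (a countable union of finite zero-sets of the equations $f_t^n(-1)=f_t^m(-1)$). Pick any $t_0\in\Bif^+_\lambda\setminus\Preper^-_\lambda$; by Lemma \ref{activity} there are infinitely many $t_n\in\Preper^+_\lambda$ accumulating at $t_0$. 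Under the hypothesis each $t_n\in\Preper^-_\lambda$; but since preperiodic parameters for the critical point $-1$ are locally finite within any stable component for $-1$ (the orbit of $-1$ lands on an attracting cycle only on a discrete set, and these sets of exceptional parameters do not accumulate in the interior of a hyperbolic component), such accumulation forces $t_0\in\Bif^-_\lambda$. The difference $\Bif^+_\lambda\setminus\Bif^-_\lambda$ is relatively open in $\Bif^+_\lambda$ and, if nonempty, has positive $\mu^+_\lambda$-mass and is therefore uncountable; yet it must be contained in the countable set $\Preper^-_\lambda$, a contradiction. Hence $\Bif^+_\lambda\subseteq\Bif^-_\lambda$, and together with the symmetry $\Bif^-_\lambda=-\Bif^+_\lambda$ we obtain $\Bif^+_\lambda=\Bif^-_\lambda$.

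I then contradict this equality. Using Lemma \ref{bifset}: when $|\lambda|<1$ the sets are disjoint, contradicting nonemptiness; when $\Re\lambda>1$ the intersection has only the two points $\pm 2\sqrt{1-\lambda}$, contradicting uncountability of $\Bif^+$. For $\lambda$ in the remaining region ($|\lambda|\geq 1$, $\Re\lambda\leq 1$, $\lambda\neq 0,1$), I exhibit a direct counterexample at $t_\lambda=\lambda-2$, where $+1$ is a super-attracting fixed point of $f_{t_\lambda}$ and the identity $f_{t_\lambda}(z)-1=-(z-1)^2/(z^2+(\lambda-2)z+1)$ shows that $+1$ is its own only preimage. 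Any preperiodic orbit of $-1$ at $t_\lambda$ must therefore land on a cycle disjoint from $\{+1\}$, which is a nontrivial algebraic condition on $\lambda$; nontriviality is checked at a single test value, e.g., $\lambda=10$, where the orbit of $-1$ is drawn into the super-attracting basin of $+1$ but cannot land on $+1$. Hence the ``bad'' $\lambda$ form a countable set, to which the argument is iterated at parameters where $+1$ has exact period $n\geq 2$, such as $t=-(\lambda+2)/(\lambda+1)$. The main obstacle I anticipate is organizing this iteration to cover every $\lambda\neq 0$: one needs to argue that no single $\lambda$ simultaneously makes the orbit of $-1$ preperiodic at every periodic-orbit parameter of $+1$, which is morally the algebraic independence of the two critical orbits and would yield Corollary \ref{independence} directly.
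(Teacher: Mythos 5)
The decisive problem is your endgame. After Lemma \ref{bifset} disposes of $|\lambda|<1$ and $\Re\lambda>1$, your treatment of the remaining region (in particular $|\lambda|>1$ with $\Re\lambda\le 1$, e.g.\ $\lambda=-4$) does not close: at the test parameter $t_\lambda=\lambda-2$ you only show that, for each fixed pair $(n,m)$, the relation $f^n_{t_\lambda}(-1)=f^m_{t_\lambda}(-1)$ fails outside a finite set of $\lambda$, so each test family excludes only a countable set of $\lambda$, and iterating over the countably many families of parameters where $+1$ is periodic never rules out that one fixed $\lambda$ is ``bad'' for every family --- which is essentially the proposition itself (you acknowledge this obstacle explicitly). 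The paper closes exactly this case with an idea your proposal is missing: assuming $\Preper^+_\lambda=\Preper^-_\lambda$ one first gets $\Bif^+=\Bif^-$ (as you do), and then uses this equality: on the unbounded stable component of Proposition \ref{compactness} the multiplier of the unique attracting fixed point defines a \emph{proper} holomorphic map to $\D\setminus\{1/\lambda\}$ (properness is where $\Bif^+=\Bif^-$ enters, since at a boundary parameter both critical points would be active, forcing the attracting fixed point to degenerate), and following the preimage of the segment $[0,1/\lambda)$ shows that the multiplier-zero parameter $t_0=\lambda-2$ lies in that unbounded stable component. There both critical points are attracted to the superattracting fixed point $+1$, and since (as you correctly note) $+1$ is its own only preimage, the orbit of $-1$ converges to $+1$ without ever landing on it, hence is infinite; thus $t_0\in\Preper^+_\lambda\setminus\Preper^-_\lambda$ for every $|\lambda|>1$ at once, with no countable exceptional set. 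Note also that for $|\lambda|=1$, $\lambda\neq 1$, which sits in your ``remaining region,'' no machinery is needed: at least one critical orbit is always infinite (Milnor, Corollaries 14.4--14.5), so $\Preper^+_\lambda\cap\Preper^-_\lambda=\emptyset$ while $\lambda-2\in\Preper^+_\lambda$.

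There are also gaps in your first half, though these are repairable. The step ``accumulation of $\Preper^-_\lambda$-parameters at $t_0$ forces $t_0\in\Bif^-$'' is asserted rather than proved: preperiodic parameters do occur inside the passive locus (centers of hyperbolic components), so your parenthetical about exceptional parameters not accumulating in a hyperbolic component is precisely what needs an argument. One needs, e.g., persistence of relations to repelling cycles under passivity, Fatou's theorem that an attracting (non-superattracting) or parabolic cycle must attract a critical point with infinite orbit (which, under your standing hypothesis, makes every parameter of $\Preper^\pm_\lambda$ postcritically finite), and an argument excluding accumulation of superattracting ``centers'' at a parameter where $-1$ is passive but the family is not stable. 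Likewise, the countability of $\Preper^-_\lambda$ requires knowing that no relation $f^n_t(-1)=f^m_t(-1)$ holds identically in $t$; this is true and follows from the degree and nonvanishing-resultant statement for $F_n^\pm$ in Theorem \ref{convergence}, but you do not justify it. Even granting these repairs, the proposal as written does not prove the statement because of the endgame gap described above.
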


\proof
The case of $\lambda=0$ is easy.  The curve $\Per_1(0)^{cm}$ has two irreducible components; each may be parameterized by $f_t(z) = z^2 + t$ with one critical point at $\infty$ and the other at 0.  The critical point at $\infty$ is fixed for all $t$, while the orbit of $0$ is infinite for all but countably many parameters $t$.

For $0 < |\lambda|\leq 1$, a stronger statement is true:
	$$\Preper^+_\lambda \cap \Preper^-_\lambda = \emptyset$$	
in $\Per_1(\lambda)^{cm}$.  Indeed, for every $f \in \Per_1(\lambda)^{cm}$, at least one critical point must have infinite forward orbit, as it is attracted to (or accumulates upon) the fixed point with multiplier $\lambda$ (or on the boundary of the Siegel disk in case the fixed point is of Siegel type).  See, for example, \cite[Corollaries 14.4 and 14.5]{Milnor:dynamics}.

For the remainder of this proof, assume that $|\lambda|>1$ and that $\Preper^+_\lambda = \Preper^-_\lambda$.  Lemma \ref{activity} shows that $\Bif^+$ is contained in the set of accumulation points of $\Preper^+_\lambda$.  But, in fact, the characterizations of stability (as in \cite[Chapter 4]{McMullen:CDR}) imply that elements of $\Preper^+_\lambda$ cannot accumulate in a stable region.  So we have $\Bif^+ = \Bif^-$.  

As before, parameterize $\Per_1(\lambda)^{cm}$ as
	$$f_t(z) = \frac{\lambda z}{z^2 + tz + 1}.$$
Recall from Proposition \ref{compactness} that $f_t$ has an attracting fixed point for all $t$ near $\infty$, with multiplier converging to $1/\lambda$ as $t\to \infty$.   Moreover, both critical points lie in its basin of attraction for all $t$ in the unbounded stable component.  In particular, each $f_t$ in the unbounded stable component has a unique attracting fixed point.

The multiplier of the unique attracting fixed point of $f_t$ defines a holomorphic function from the unbounded stable component to the unit disk.  Recall that the fixed-point multiplier cannot be equal to $1/\lambda$ for any $f_t$, and it will converge to $1/\lambda$ if and only if $t\to\infty$ in $\Per_1(\lambda)^{cm}$  \cite{Milnor:quad}.   Moreover, since $\Bif^+ = \Bif^-$, the multiplier must converge to 1 in absolute value as $t$ tends to the bifurcation locus.  Consequently, the multiplier of the attracting fixed point determines a proper holomorphic map from the unbounded stable component to the unit disk punctured at $1/\lambda$.  It follows that each preimage of the line segment $[0, 1/\lambda)$ defines a path from a parameter $t_0$ to infinity in this unbounded stable component.  In fact, $t_0 = \lambda - 2$ is the unique parameter where the critical point $+ 1$ is fixed (and similarly, $2-\lambda$ is the unique parameter at which $-1$ is fixed), so $f_{t_0}$ is conjugate to a polynomial; we have just shown that $t_0$ lies in this unbounded stable component. 

As the orbit of the critical point $-1$ for $f_{t_0}$ must converge towards the fixed critical point $+1$, we see that $-1$ has infinite orbit.  This contradicts our assumption that $\Preper^+_\lambda = \Preper^-_\lambda$ and completes the proof.
\qed

\bigskip
As an immediate corollary of Proposition \ref{no synchrony}, we see that the two critical points are dynamically independent on $\Per_1^{cm}(\lambda)$ for any $\lambda\not=0$, in the sense of critical orbit relations as formulated in \cite[Question 6.4]{D:stableheight} (see also \cite[\S1.4]{BD:polyPCF}).  

\begin{cor}  \label{independence}
Fix $\lambda\in\C^*$.  The critical points $c_+ = +1$ and $c_- = -1$ cannot satisfy any dynamical relations along $\Per_1^{cm}(\lambda)$.  In particular, for each pair of integers $n, m\geq 0$, there exists $f_t\in \Per_1(\lambda)^{cm}$ so that $f_t^n(c_+) \not= f_t^m(c_-)$.
\end{cor}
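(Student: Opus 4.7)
The plan is to prove the contrapositive, using Proposition \ref{no synchrony} as a black box. Suppose, toward a contradiction, that there exist integers $n,m\geq 0$ such that the identity
\[
	f_t^n(c_+) \;=\; f_t^m(c_-)
\]
holds for every $t\in\Per_1(\lambda)^{cm}$. I will show that this forces $\Preper^+_\lambda = \Preper^-_\lambda$, contradicting Proposition \ref{no synchrony}.

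For the key step, observe that a point $z$ has finite forward orbit under $f_{t_0}$ if and only if any iterate $f_{t_0}^k(z)$ has finite forward orbit; shifting by a fixed number of iterates cannot change preperiodicity. Therefore, if the identity holds, then for any fixed parameter $t_0$ the point $c_+$ is preperiodic under $f_{t_0}$ exactly when $f_{t_0}^n(c_+)$ is preperiodic, which by the identity occurs exactly when $f_{t_0}^m(c_-)$ is preperiodic, which in turn occurs exactly when $c_-$ is preperiodic. This shows $\Preper^+_\lambda = \Preper^-_\lambda$, contradicting Proposition \ref{no synchrony}, and completes the proof of the existence statement in the corollary.

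I do not anticipate any real obstacle: the argument is a direct deduction from Proposition \ref{no synchrony}, using only the elementary fact that preperiodicity is preserved by finite iteration. The only subtlety worth flagging is that the conclusion requires an identity on the full parameter space (rather than merely on an open set), but since $\Per_1(\lambda)^{cm}$ is irreducible and $t\mapsto f_t^n(c_+) - f_t^m(c_-)$ is a rational function of $t$, vanishing on an infinite set would already yield a global identity; the contrapositive formulation in the corollary only asserts the existence of a single $t$ where the equality fails, which is weaker and hence immediate from the above.
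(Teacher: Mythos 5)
Your argument for the second assertion is correct and is essentially the paper's own mechanism: assume the identity $f_t^n(c_+)=f_t^m(c_-)$ holds identically in $t$, note that preperiodicity is unchanged by applying finitely many iterates, conclude $\Preper^+_\lambda=\Preper^-_\lambda$, and contradict Proposition \ref{no synchrony}. The contrapositive bookkeeping is also fine (the negation of the stated conclusion is exactly that some pair $n,m$ gives an identity valid for \emph{all} $t$), so the closing remark about irreducibility is not even needed.

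However, you have only proved the ``in particular'' clause. The first sentence of the corollary asserts the absence of \emph{any} dynamical relation, which the paper (following \cite[Question 6.4]{D:stableheight}) takes to mean: there is no pair of rational families $A_t$, $B_t$, each commuting with $f_t$ for all $t\in\Per_1^{cm}(\lambda)$, with $A_t(c_+)=B_t(c_-)$ identically. Orbit relations $A_t=f_t^n$, $B_t=f_t^m$ are just one instance of this. The paper's proof covers the general case by observing that such a dependence still forces ``$c_+$ preperiodic $\Leftrightarrow$ $c_-$ preperiodic'' for every $t$, and then invokes Proposition \ref{no synchrony} exactly as you do. To close the gap you would add the corresponding transfer step: if $c_+$ is preperiodic then $A_t(c_+)=B_t(c_-)$ is preperiodic (commutation gives $f_t^k(A_t(c_+))=A_t(f_t^k(c_+))$, which takes finitely many values); and conversely, since $B_t(f_t^k(c_-))=f_t^k(B_t(c_-))$ takes finitely many values and the fibers of the (nonconstant) map $B_t$ are finite, the orbit of $c_-$ is finite as well. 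With that paragraph added, your proof matches the full statement; as written, it establishes only the special case.
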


\proof
Dynamical dependence of points $c_+$ and $c_-$, in the sense of \cite[Question 6.4]{D:stableheight}, means that there exist rational functions $A_t$ and $B_t$, commuting with $f_t$ for all $t\in \Per_1^{cm}(\lambda)$, so that $A_t(c_+) = B_t(c_-)$ for all $t$.  This includes orbit relations such as $f_t^n(c_+) = f_t^m(c_-)$ for all $t$.  Dependence implies that $c_+$ is preperiodic for $f_t$ if and only if $c_-$ is preperiodic for $f_t$.  This contradicts Proposition \ref{no synchrony}.
\qed

\begin{remark}
In contrast with Corollary \ref{independence}, conditions on the multipliers can and do impose relations between the critical points in other settings.  For example, if we look at conjugacy classes $f\in M_2$ with two distinct period-3 cycles of the same multiplier, then we obtain the automorphism locus $\mathcal{A}_2$ \cite[Theorem 3.1]{Berker:Epstein:Pilgrim}.  The family $\mathcal{A}_2$ is given by $f_{\lambda,0}(z) = \lambda z /(z^2 + 1)$, for parameter $\lambda\in\C^*$, with the automorphism $A(z) = -z$ for all $\lambda$; the critical points (at $\pm 1$) and their orbits are symmetric by $A$, and thus they define the same bifurcation locus and equal bifurcation measures.
\end{remark}

\bigskip
\section{The bifurcation measures are distinct}
\label{measures}

In this section, we provide a proof of Theorem \ref{distinct measures}.  Recall the definitions from \S \ref{definitions}.

\subsection{Potential functions for the bifurcation measures.}
Recall the definitions of the measures $\mu^{\pm}_\lambda$ and their potential functions $H_\lambda^{\pm}$ from \S\ref{definitions}.  We begin by showing that if the two bifurcation measures were to coincide, their potential functions would have to be equal.  The first lemma controls the growth of $H_\lambda^{\pm}$.  The lower bound will be used again in the proof of Theorem \ref{equidistribution}.  We will work with the norm
	$$\|(z_1, z_2)\| = \max\{|z_1|, |z_2|\}$$
on $\C^2$.

\begin{lemma}  \label{H bound}
For each $\lambda\neq 0$, there are constants $c, C >0$, such that
	 $$c |t|^{-1} \leq \frac {\| F_t (z_1,z_2) \|}{\|(z_1,z_2)\|^2} \leq C |t|$$
for all $|t|\geq 1$ and all $(z_1, z_2)\neq (0,0)$.   Consequently,
	$$H_\lambda^\pm(t) = O(\log |t|)$$
as $t\to \infty$.
\end{lemma}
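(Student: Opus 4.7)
The plan is to first establish a two-sided pointwise bound on $\|F_t(z_1,z_2)\|$ and then iterate it to control $H_\lambda^\pm(t)$. By the homogeneity of $F_t$, it suffices to verify both bounds on the unit sphere $\|(z_1, z_2)\| = 1$. The upper bound is immediate from the triangle inequality on each coordinate: $|\lambda z_1 z_2| \leq |\lambda|$ and $|z_1^2 + t z_1 z_2 + z_2^2| \leq 2 + |t|$, so for $|t| \geq 1$ one has $\|F_t(z_1,z_2)\| \leq C|t|$ with, say, $C = \max\{|\lambda|,3\}$.

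For the lower bound I would argue by a dichotomy on the size of $|z_1 z_2|$. If $|z_1 z_2| \geq (2|t|)^{-1}$, then the first coordinate already satisfies $|\lambda z_1 z_2| \geq |\lambda|/(2|t|)$. Otherwise $|z_1 z_2| < (2|t|)^{-1}$, so the cross term in the second coordinate satisfies $|t z_1 z_2| < 1/2$; normalizing so that $|z_1| = 1$ (valid since the second coordinate is symmetric in $z_1, z_2$), we deduce $|z_2| < (2|t|)^{-1} \leq 1/2$, hence $|z_2^2| < 1/4$, and the reverse triangle inequality gives $|z_1^2 + t z_1 z_2 + z_2^2| \geq 1 - 1/2 - 1/4 = 1/4$. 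Taking $c = \min\{|\lambda|/2, 1/4\}$ completes the pointwise lower bound.

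To deduce the estimate on $H_\lambda^\pm$, set $\phi_n(t) = \log \|F_t^n(\pm 1, 1)\|$. The pointwise bounds give
\[ 2 \phi_{n-1} + \log c - \log |t| \;\leq\; \phi_n \;\leq\; 2 \phi_{n-1} + \log(C|t|), \]
and iterating from $\phi_0 = \log\|(\pm 1,1)\| = 0$ by summing a geometric series yields, for $|t|\geq 1$, the two-sided bound $\log c - \log|t| \leq \phi_n/2^n \leq \log C + \log|t|$. Letting $n \to \infty$ gives $|H_\lambda^\pm(t)| \leq \log|t| + \max\{-\log c,\log C\} = O(\log|t|)$ as $|t| \to \infty$. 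The only substantive step is the dichotomy in the lower bound; everything else is mechanical bookkeeping.
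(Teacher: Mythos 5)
Your proof is correct and follows essentially the same route as the paper: the same dichotomy (either $|z_1z_2|$ is large enough for the first coordinate $\lambda z_1z_2$ to dominate, or the reverse triangle inequality controls the second coordinate after normalizing one variable to have modulus $1$) gives the pointwise bounds with the same constants $c=\min\{|\lambda|/2,1/4\}$ and $C=\max\{|\lambda|,3\}$. Your iteration of the recursion for $\log\|F_t^n(\pm1,1)\|$ is the paper's telescoping-series identity for $H_\lambda^\pm$ written in an equivalent form, so nothing further is needed.
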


\proof
The upper bound is immediate from the expression $F_t(z_1, z_2) =  (\lambda z_1 z_2, z_1^2 + t z_1 z_2 +  z_2^2)$.  We may set
	$$C = \max\{|\lambda|, 3\}.$$
For the lower bound, by the symmetry and homogeneity of $F_t$, we may assume that $z_2=1$ and $|z_1|\leq 1$.  Then $\|(z_1,z_2)\| = 1$, and we shall estimate the norm of $sF_{1/s}(z_1,1) = (s\lambda z_1, sz_1^2 + s + z_1)$ with $|s| \leq 1$.  Let
	$$c = \min\{|\lambda|/2, 1/4\}.$$
For each $s$ with $|s| \leq 1$, either $|s\lambda z_1| \geq c|s|^2$, or $|z_1| < |s|/2$ in which case,
   $$|sz_1^2+s+z_1|  \geq |s| - |s|/2  -  |s|^3/4 \geq |s|/4  \geq c |s|^2.  $$
Consequently, $\|F_{1/s}(z_1,z_2)\| \geq c|s|$ and the lower bound is proved.

By the identity
$$H_\lambda^+(t)=\sum_{i=2}^{+\infty}\frac{1}{2^i}\log\Bigg(\frac{\|F_t^i(1,1)\|}{\|F_t^{i-1}(1,1)\|^2}\Bigg)
+\frac{1}{2}\log\|F_t(1,1)\|,$$
we have that $|H_\lambda^+(t)|<3\log|t|$ when $t$ is large.  The same holds for $H_\lambda^-$, since $H_\lambda^-(t) = H_\lambda^+(-t)$.
\qed

\begin{lemma}\label{H1=H2}
For any $\lambda\neq0$,  we have
$$\mu^+_\lambda=\mu^-_\lambda \Longrightarrow H_\lambda^+=H_\lambda^-.$$
\end{lemma}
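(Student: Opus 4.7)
The plan is to exploit the symmetry $H_\lambda^-(t) = H_\lambda^+(-t)$ recorded in \S\ref{definitions} together with the growth bound from Lemma \ref{H bound}, in order to show that whenever the two Laplacians agree the two potentials must agree. Concretely, I set $h := H_\lambda^+ - H_\lambda^-$ and reduce the claim to $h \equiv 0$.

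First, since both $H_\lambda^\pm$ are continuous subharmonic functions on $\C$ (being locally uniform limits of the subharmonic functions $t \mapsto \frac{1}{2^n}\log\|F_t^n(\pm 1, 1)\|$) with distributional Laplacians $2\pi \mu_\lambda^\pm$, the hypothesis $\mu_\lambda^+ = \mu_\lambda^-$ gives $\Delta h = 0$ as a distribution on $\C$. By Weyl's lemma, applied to the continuous (hence $L^1_{\mathrm{loc}}$) function $h$, we conclude that $h$ is a bona fide harmonic function on all of $\C$.

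Next, I would use Lemma \ref{H bound} to record that $H_\lambda^\pm(t) = O(\log|t|)$, so there exist constants $A, B > 0$ with
\[
|h(t)| \leq A + B\log^+|t| \qquad \text{for all } t \in \C.
\]
Since $\C$ is simply connected, write $h = \Re f$ for some entire function $f$. Then
\[
|e^{f(t)}| = e^{h(t)} \leq e^A(1 + |t|)^B,
\]
so $e^f$ is an entire function of polynomial growth, hence a polynomial; as it has no zeros, it must be a nonzero constant, and therefore $f$, and hence $h$, is a constant.

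Finally, the symmetry $H_\lambda^-(t) = H_\lambda^+(-t)$ forces
\[
h(-t) = H_\lambda^+(-t) - H_\lambda^-(-t) = H_\lambda^-(t) - H_\lambda^+(t) = -h(t),
\]
so the constant $h$ is odd and thus identically zero, giving $H_\lambda^+ = H_\lambda^-$. The only mildly nonstandard ingredient is the Liouville-type step for harmonic functions of logarithmic growth on $\C$; that is the main (and only real) obstacle, and it is handled uniformly by the exponentiation trick above.
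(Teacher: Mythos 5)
Your proof is correct and follows essentially the same route as the paper's: set $h = H_\lambda^+ - H_\lambda^-$, note $\Delta h = 0$ so $h$ is harmonic, invoke the $O(\log|t|)$ growth from Lemma \ref{H bound} to conclude $h$ is constant, and then use the symmetry $H_\lambda^-(t) = H_\lambda^+(-t)$ to force the constant to vanish. The only difference is that you spell out the Liouville-type step (via Weyl's lemma and the $e^f$ trick), which the paper leaves implicit.
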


\proof
Let $h(t)=H_\lambda^+(t) - H_\lambda^-(t)$.  If $\mu^+_\lambda = \mu^-_\lambda$, then $\Delta h=0$ in $\mathbb{C}$.  This implies that $h$ is harmonic.  By Lemma \ref{H bound}, we have $h(t)=O(\log|t|)$ for $t$ near $\infty$.  Therefore $h$ is constant.  Combined with the symmetry $H_\lambda^-(t) = H_\lambda^+(-t)$, we may conclude that $H_\lambda^+ = H_\lambda^-$.
\qed

\subsection{Showing that the potentials differ at a single point}
As a consequence of Lemma \ref{H1=H2}, it suffices to show that $H^+$ and $H^-$ differ at a single point.  For $t=\lambda-2$, we may compute the values.  Observe that $f_{\lambda-2}$ is conjugate to a polynomial, as $f_{\lambda-2}(1) = 1$.

\begin{lemma}\label{H12} We have
$$H_\lambda^+(\lambda-2)=\log|\lambda| \quad \mbox{ and } \quad  \ H_\lambda^-(\lambda-2)=\frac{1}{2}G_{\mathcal{M}}(c(\lambda))+\log 2,$$
when $G_{\mathcal{M}}$ is defined in Example \ref{quadratic G} and $c(\lambda)=\frac{\lambda}{2}-\frac{\lambda^2}{4}$.
\end{lemma}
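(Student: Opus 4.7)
The identity $H_\lambda^+(\lambda-2)=\log|\lambda|$ is essentially immediate: since $c_+(t)\equiv 1$ is a critical point for every $t$ and $f_{\lambda-2}(1)=1$, the homogeneous lift from \eqref{F_t} satisfies $F_{\lambda-2}(1,1)=(\lambda,\lambda)=\lambda\cdot(1,1)$. Using the degree-$2$ homogeneity of $F_t$, induction gives $F_{\lambda-2}^n(1,1)=\lambda^{2^n-1}(1,1)$, so $\|F_{\lambda-2}^n(1,1)\|=|\lambda|^{2^n-1}$ and the defining limit in \eqref{H} yields $H_\lambda^+(\lambda-2)=\log|\lambda|$.

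For the $-$ statement, the plan is to exploit the fact that $+1$ is a superattracting fixed point of $f_{\lambda-2}$ and $-1$ is the other critical point, so $f_{\lambda-2}$ is M\"obius-conjugate to a quadratic polynomial of the form $p_c(w)=w^2+c$. I would first produce an explicit conjugacy sending $+1\to\infty$ and $-1\to 0$, for instance $\Phi(z)=-\lambda(z+1)/(2(z-1))$. Computing $\Phi\circ f_{\lambda-2}\circ \Phi^{-1}$ and applying a single affine conjugation $w\mapsto w+\lambda/2$ to eliminate the linear term shows that the resulting polynomial is $p_c$ with $c=\lambda/2-\lambda^2/4=c(\lambda)$, as in the statement.

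The next step is to transport this conjugation to the level of homogeneous lifts. Choose a linear lift $A:\C^2\to\C^2$ of $\Phi$, for example $A(z_1,z_2)=(-\lambda(z_1+z_2)/2,\,z_1-z_2)$, and compute $F':=A\circ F_{\lambda-2}\circ A^{-1}$. Since both $F'$ and the standard polynomial lift $P_c(w_1,w_2)=(w_1^2+cw_2^2,\,w_2^2)$ are degree-$2$ homogeneous lifts of the same rational map $p_{c(\lambda)}$, they must differ by a nonzero scalar $\mu\in\C^*$. An explicit (but short) calculation gives $F'=-P_c$, hence $(F')^n=(-1)^{2^n-1}P_c^n=-P_c^n$ for $n\ge 1$. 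Taking $v=(-1,1)$, so that $A(v)=(0,-2)$, and using that $A$ is a linear isomorphism (so $\log\|A(\cdot)\|$ and $\log\|\cdot\|$ agree up to bounded error), the defining limit gives
$$H_\lambda^-(\lambda-2) \;=\; \lim_{n\to\infty}\frac{1}{2^n}\log\|(F')^n(0,-2)\| \;=\; G_{P_c}(0,-2),$$
where $G_{P_c}$ is the dynamical Green function of the lift $P_c$.

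It remains to identify $G_{P_c}(0,-2)$ with $\frac{1}{2} G_{\mathcal{M}}(c(\lambda))+\log 2$. By the $2$-homogeneity $G_{P_c}(\alpha v)=G_{P_c}(v)+\log|\alpha|$, one has $G_{P_c}(0,-2)=G_{P_c}(0,1)+\log 2$, and since $P_c^n(z,1)=(p_c^n(z),1)$ the quantity $G_{P_c}(0,1)$ equals the classical escape rate $G_{p_c}(0)=\lim(1/2^n)\log^+|p_c^n(0)|$ of the critical point. Comparing with Example \ref{polynomials} and using $p_c^n(0)=p_c^{n-1}(c)$ in the parametrization $f_c(z)=z^2+c$ shows $G_{\mathcal{M}}(c)=2\,G_{p_c}(0)$, which gives the desired formula. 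The only real source of difficulty is the bookkeeping of scalars: the factor $\mu=-1$ relating the non-polynomial lift $F_{\lambda-2}$ (through the conjugation) to the standard polynomial lift $P_c$, together with the factor of $2$ between the normalization of $G_{\mathcal{M}}$ in \eqref{quadratic G} and the conventional polynomial escape rate $G_{p_c}(0)$. Nothing is deep once $\Phi$ is written down, but every constant must be tracked precisely in order to land on the additive $\log 2$ appearing in the lemma.
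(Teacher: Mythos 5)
Your proposal is correct and follows essentially the same route as the paper: both arguments use a fixed point of the lift $F_{\lambda-2}$ (your $F_{\lambda-2}(1,1)=\lambda(1,1)$ versus the paper's $(1/\lambda,1/\lambda)$) for the $+$ case, and for the $-$ case conjugate the homogeneous lift by an explicit linear map to a lift of a quadratic polynomial, track the scalar and the factor $\log 2$ via homogeneity, and identify the escape rate of the critical point with $\tfrac12 G_{\mathcal{M}}(c(\lambda))$; the paper merely stops at $q_\lambda(z)=\lambda z(z+1)$ with $A(z_1,z_2)=(z_1,z_2-z_1)$ before passing affinely to $p_{c(\lambda)}$, whereas you go directly to the monic centered form (and your computations, including $F'=-P_c$ and $A(-1,1)=(0,-2)$, check out). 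The only blemish is the remark about ``applying a single affine conjugation $w\mapsto w+\lambda/2$'': the M\"obius map $\Phi(z)=-\lambda(z+1)/(2(z-1))$ you wrote already conjugates $f_{\lambda-2}$ exactly to $p_{c(\lambda)}$, so that extra step is redundant, but this does not affect the argument.
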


 \proof  The map $F_t$ defined in (\ref{F_t}) for $t=\lambda-2$ satisfies $F_{\lambda-2}(1/\lambda, 1/\lambda)=(1/\lambda,1/\lambda)$.  Therefore,
   $$H_\lambda^+(\lambda-2)= \lim_{n\to\infty} \frac{1}{2^n} \log\|F_{\lambda-2}^n(1/\lambda, 1/\lambda)\| + \log|\lambda|=\log|\lambda|.$$

For $t=\lambda-2$, recall that $f_{\lambda-2}$ is conjugate to a polynomial
\begin{equation}\label{quadratic poly}
q_\lambda(z) = \lambda z(z+1).
\end{equation}
Set
$$A(z_1,z_2)=(z_1,z_2-z_1) \quad \mbox{ and } \quad \tilde{F}_{\lambda,t}=A\circ F_{\lambda,t}\circ A^{-1}.$$
When $t=\lambda-2$, we have
$$\tilde{F}_{\lambda,\lambda-2}(z_1, z_2)=(\lambda z_1(z_1+z_2), z_2^2).$$
Note that $A(-1,1)=2(-1/2,1), \  \tilde{F}_{\lambda, \lambda-2}^n(A(-1,1))=2^{2^n}(q_\lambda^n(-1/2),1)$,
so that
\begin{equation}\label{H2-}
\begin{array}{lll}
H_\lambda^-(\lambda-2)
&=\lim_{n\rightarrow\infty}2^{-n}\log\|F_{\lambda,\lambda-2}^n(-1,1)\|\\[6pt]
&=\lim_{n\rightarrow\infty}2^{-n}\log\|A^{-1}\circ \tilde{F}_{\lambda,\lambda-2}^n(A(-1,1))\|\\[6pt]
&=\lim_{n\rightarrow\infty}2^{-n}\log\|\tilde{F}_{\lambda,\lambda-2}^n(A(-1,1))\|\\[6pt]
&=\lim_{n\rightarrow\infty}2^{-n}\log^+|q_\lambda^n(-1/2)|+\log 2.
\end{array}
\end{equation}
In this way, we express $H^-_\lambda(\lambda-2)$ in terms of
 the escape rate of the critical point $-1/2$ of the polynomial $q_\lambda$. We may conjugate $q_\lambda$ to the quadratic polynomial
$$p_{c(\lambda)}(z) = z^2+c(\lambda), \qquad  \mbox{with } c(\lambda)=\frac{\lambda}{2}-\frac{\lambda^2}{4}.$$
Then we get
$$H^-_{\lambda}(\lambda-2)=\frac{1}{2}G_{\mathcal{M}}(c(\lambda))+\log 2.$$
\qed

Now we are ready to compare $H^+_\lambda(\lambda-2)$ and $H^-_\lambda(\lambda-2)$.

\begin{lemma}\label{H1<=H2}
For any $\lambda\neq 0$ and $\Re \lambda\leq 1$, we have
\begin{equation}\label{H1 leq H2}
H_\lambda^+(\lambda-2)\leq H_\lambda^-(\lambda-2),
\end{equation}
with equality if and only if $\lambda=-2$.
\end{lemma}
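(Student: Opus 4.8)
The plan is to reduce inequality \eqref{H1 leq H2} to a statement purely about the quadratic polynomial $p_{c(\lambda)}(z) = z^2 + c(\lambda)$ and its escape rate $G_{\mathcal M}$, using the explicit values computed in Lemma \ref{H12}. We must show that
	$$\log|\lambda| \;\leq\; \tfrac12 G_{\mathcal M}(c(\lambda)) + \log 2,$$
i.e. $G_{\mathcal M}(c(\lambda)) \geq 2\log(|\lambda|/2) = 2\log|\lambda| - 2\log 2$, whenever $\Re\lambda \leq 1$. Since $G_{\mathcal M}\geq 0$ always, there is nothing to prove when $|\lambda|\leq 2$; so the content is the regime $|\lambda| > 2$ (which, combined with $\Re\lambda\le 1$, forces $c(\lambda)$ to be large and outside the Mandelbrot set, where $G_{\mathcal M}>0$). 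The cleanest route is to go back to the escape-rate description from \eqref{H2-}, namely $\tfrac12 G_{\mathcal M}(c(\lambda)) + \log 2 = \lim_n 2^{-n}\log^+|q_\lambda^n(-1/2)|$ for $q_\lambda(z) = \lambda z(z+1)$, and to estimate this escape rate directly for the polynomial $q_\lambda$ at its critical point $-1/2$.

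Concretely, I would first record $q_\lambda(-1/2) = -\lambda/4$, and then track the orbit. For $|\lambda|$ large the orbit escapes quickly, and one gets a lower bound of the form $2^{-n}\log^+|q_\lambda^n(-1/2)| \to \log|\lambda| + (\text{correction})$; the point is to pin down the correction term precisely enough to compare with $\log|\lambda| - \log 2$. A convenient device: conjugating $q_\lambda$ by $z\mapsto \lambda z$ gives $z\mapsto z(z+1/\lambda) \cdot \lambda$... more usefully, write the escape rate of $q_\lambda$ as $G_{q_\lambda}(z) = \lim 2^{-n}\log^+|q_\lambda^n(z)|$ and use the functional equation $G_{q_\lambda}(q_\lambda(z)) = 2 G_{q_\lambda}(z)$ together with the asymptotic $G_{q_\lambda}(z) = \log|z| + \log|\lambda| + o(1)$ as $z\to\infty$. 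Then $\tfrac12 G_{\mathcal M}(c(\lambda)) + \log 2 = G_{q_\lambda}(-1/2) = \tfrac12 G_{q_\lambda}(-\lambda/4) = \tfrac12\big(\log(|\lambda|/4) + \log|\lambda| + \varepsilon\big)$ where $\varepsilon$ is an error controlled by how far $-\lambda/4$ is into the escaping region. This already gives the right main term $\log|\lambda|$; one must check the error is nonnegative, or absorb it, precisely when $\Re\lambda\le 1$. The role of the hypothesis $\Re\lambda \le 1$ is exactly to guarantee that the first iterate $-\lambda/4$ (equivalently $c(\lambda) = \lambda/2 - \lambda^2/4$) sits in the ``fast escape'' region where the subharmonic estimates are clean and monotone; geometrically $\Re\lambda \le 1$ should be what keeps $c(\lambda)$ away from the main cardioid of $\mathcal M$ in the relevant direction.

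For the equality case, the inequality above is strict as soon as the error term is strictly positive, so equality forces $\lambda$ into the boundary regime $|\lambda|\le 2$ where both sides might vanish or match. When $|\lambda|\le 2$ and $\Re\lambda\le 1$ we need $\log|\lambda| \le \tfrac12 G_{\mathcal M}(c(\lambda)) + \log 2$ with equality iff $\lambda = -2$. At $\lambda = -2$: $|\lambda| = 2$ so the left side is $\log 2$; and $c(-2) = -1 - 1 = -2$, the Chebyshev/Misiurewicz parameter with $G_{\mathcal M}(-2) = 0$, so the right side is also $\log 2$ — equality holds. To see it is the \emph{only} equality point, note that for $|\lambda| < 2$ the left side is $<\log 2\le$ right side (strict, since $G_{\mathcal M}\ge 0$), and for $|\lambda| = 2$ with $\Re\lambda \le 1$ one checks $G_{\mathcal M}(c(\lambda)) > 0$ unless $\lambda = -2$: indeed $c(\lambda)$ traces an arc as $\lambda$ runs over $\{|\lambda|=2,\ \Re\lambda\le 1\}$, and one verifies this arc meets $\mathcal M$ only at $c = -2$ (all other points of the arc lie outside $\mathcal M$, e.g. by the $|c| > 2$ escape criterion or a direct check that $c(\lambda)\notin\mathcal M$).

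The main obstacle I anticipate is making the error term $\varepsilon$ in the escape-rate asymptotic fully explicit and showing its nonnegativity (or the exact inequality) \emph{for all} $\lambda$ with $\Re\lambda\le 1$ rather than just $|\lambda|$ large — in particular handling the transitional range $2 < |\lambda|$ with $\Re\lambda\le 1$ but $c(\lambda)$ only moderately large, where one cannot simply invoke the leading asymptotics of $G_{q_\lambda}$ near infinity. I expect this is handled by a one-step argument: show $|q_\lambda(-1/2)| = |\lambda|/4$ and then that $|q_\lambda^{n+1}(-1/2)| \ge |q_\lambda^n(-1/2)|^2 \cdot (\text{something} \ge |\lambda|^{?})$ using $\Re\lambda\le 1$ to control $|z + 1|$ from below along the orbit, giving a clean lower bound on $G_{q_\lambda}(-\lambda/4)$ that yields exactly $\log|\lambda| - \log 2$ after dividing by $2$ and adding $\log 2$. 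The book-keeping of constants here is the delicate part, but it is elementary once the right inductive inequality is set up.
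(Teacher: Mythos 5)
Your reduction is the same as the paper's: via Lemma \ref{H12}, everything comes down to $G_{\mathcal M}(c(\lambda))\geq 2\log|\lambda/2|$ for $\Re\lambda\leq 1$, which is trivial (and strict) for $|\lambda|<2$, and you correctly identify $\lambda=-2$ as the equality point. But the two parts that carry the actual content are not proved. First, on the circle $|\lambda|=2$ with $\Re\lambda\le 1$, you need $c(\lambda)\notin\mathcal M$ for $\lambda\neq-2$; your suggested shortcut via the $|c|>2$ escape criterion fails (e.g.\ $c(2i)=1+i$ has $|c|=\sqrt2<2$), and ``a direct check'' is exactly the nontrivial computation the paper performs (showing $|p_{c(\lambda)}^2(0)|=|c(\lambda)(c(\lambda)+1)|>2$ by minimizing an explicit function of $\cos\theta$ on $[-1,1/2]$), which your proposal does not supply.

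Second, and more seriously, in the main regime $|\lambda|>2$, $\Re\lambda\le 1$ your ``error term'' formulation is circular: since $\phi(z)=\lambda z+\lambda/2$ conjugates $q_\lambda$ to $p_{c(\lambda)}$ and $\phi(-\lambda/4)=c(\lambda)$, the statement $G_{q_\lambda}(-\lambda/4)\ge \log(|\lambda|^2/4)$ \emph{is} the statement $G_{\mathcal M}(c(\lambda))\ge 2\log(|\lambda|/2)$, so no reduction has taken place; the nonnegativity of $\varepsilon$ is precisely what must be proved, and it genuinely uses $\Re\lambda\le 1$ (it fails, e.g., at $\lambda=3$, where $c=-3/4\in\mathcal M$). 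Your proposed fix --- an inductive lower bound on $|q_\lambda^{n+1}(-1/2)|$ using $\Re\lambda\le1$ to bound $|z+1|$ below along the orbit --- is not set up and would have to be essentially sharp: along $\Re\lambda=1$ one has $c(\lambda)=|\lambda|^2/4$ and the margin $\tfrac12\log(c^2+c)-\log c\to 0$ as $|\lambda|\to\infty$, so crude constants cannot close the argument; and for $|\lambda|$ slightly above $2$ near $\lambda=\pm2i$ the orbit of the critical value does not escape in one step, so a one-step squaring estimate does not even start. The paper handles this region globally: Claim 1 gives strict inequality on the boundary line $\Re\lambda=1$ by a real one-variable estimate, Claim 2 handles the boundary circle $|\lambda|=2$, and then $G_{\mathcal M}(c(\lambda))-2\log|\lambda/2|$ is harmonic on $\Omega=\{\Re\lambda<1,\ |\lambda|>2\}$ and tends to $0$ at infinity, so the minimum principle yields strict positivity inside. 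Your proposal contains neither this potential-theoretic argument nor a working substitute for it, so as it stands the lemma is not established.
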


\proof By Lemma \ref{H12}, we need to show that
\begin{equation}
G_{\mathcal{M}}(c(\lambda))\geq 2\log|\lambda/2|,
\end{equation}
for $\lambda\neq 0$ and $\Re \lambda\leq 1$.
The proof relies on the following two claims:

\medskip\noindent
{\it Claim 1: If $\Re \lambda=1$, we have
\begin{equation}\label{hexi111}
G_{\mathcal{M}}(c(\lambda))> 2\log|\lambda/2|.
\end{equation}}

\noindent
{\it Claim 2: For $|\lambda|=2$, $\Re \lambda<1$, the parameter $c(\lambda)=\frac{\lambda}{2}-\frac{\lambda^2}{4}$ lies in the Mandelbrot set $\mathcal{M}$ if and only if $\lambda=-2$.}

Lemma \ref{H1<=H2} follows easily from Claims 1 and 2.  Indeed, for $|\lambda|<2$, we have $G_{\mathcal{M}}(c(\lambda))\geq 0>2\log|\lambda/2|$.  By Claim 2, for $|\lambda|=2$, $\Re \lambda<1$ and $\lambda\neq-2$,  we have $G_{\mathcal{M}}(c(\lambda))>0= 2\log|\lambda/2|$, and for $\lambda=-2$, $G_{\mathcal{M}}(c(\lambda))=G_{\mathcal{M}}(-2)=0= 2\log|\lambda/2|$.  By Claim 1, if $\Re \lambda=1$, we have
$ G_{\mathcal{M}}(c(\lambda))> 2\log|\lambda/2|$.   It follows from Claim 2 that $G_{\mathcal{M}}(c(\lambda))$ is harmonic in the region $\Omega=\{\Re \lambda<1, |\lambda|>2\}$. 
Observe that when $\lambda\rightarrow\infty$ in $\Omega$,
$$G_{\mathcal{M}}(c(\lambda))-2\log|\lambda/2|\rightarrow 0.$$
By the maximum/minimum value theorem,  we have $ G_{\mathcal{M}}(c(\lambda))> 2\log|\lambda/2|$ in $\Omega$. The conclusion then follows.

\medskip
\noindent{\it Proof of Claim 1.} For $\Re \lambda=1$, we have $2-\lambda=\overline{\lambda}$ and $c(\lambda)=|\lambda|^2/4$.  It is equivalent to show that $G_\mathcal{M}(c)>\log c$ when $c>1/4$ and $c\in\mathbb{R}$. For $p_c(z)=z^2+c$ with $c>1/4$, we have $p_c(c)=c^2+c$ and  $p_c^{n}(c)\geq (c^2+c)^{2^{n-1}}$.
 Consequently,
$$G_{\mathcal{M}}(c)\geq \lim_{n\rightarrow\infty} 2^{-n}\log (c^2+c)^{2^{n-1}}=\frac{1}{2}\log(c^2+c)>\log c.$$

\medskip
\noindent{\it Proof of Claim 2.}
Let $p_c(z)=z^2+c$. Recall that the Mandelbrot set $\mathcal{M}$ can be defined by
$$\mathcal{M} = \{c\in \C: |p_c^n(0)|\leq 2 \textup{ for any $n\geq 1$}\}.$$
In order to show $c(\lambda)\notin \mathcal{M}$ when $|\lambda|=2$, $\Re \lambda<1$, by the above definition, it suffices to show that \begin{equation}\label{mandelbrot}
|p_{c(\lambda)}^2(0)|=|(c(\lambda))(c(\lambda)+1)|>2.
\end{equation}
Let $\lambda=2(\cos\theta+i\sin\theta)$ with $\theta \in [\pi/3, 5\pi/3]$. Then $\cos\theta\in [-1, 1/2]$. With some computation,  one has
$$|(c(\lambda))(c(\lambda)+1)|=\sqrt{2(5-5\cos\theta-4\cos^2\theta+4\cos^3\theta)}$$
 Let $u=\cos\theta\in [-1, 1/2]$,  the function $g(u)=5-5u-4u^2+4u^3$ have minimum when $u=-1$ or $1/2$.
   Consequently, for any $\lambda$ with  $|\lambda|=2$, $\Re \lambda<1$ and $\lambda\neq-2$, the inequality (\ref{mandelbrot}) holds; i.e., $c(\lambda)$ is not in the Mandelbrot set.
\qed

\medskip
Finally, we treat the case of $\lambda = -2$.

\begin{prop}\label{QPer1(-2)}
For $\lambda=-2$, the bifurcation sets $\Bif^+$ and $\Bif^-$ are not equal.
\end{prop}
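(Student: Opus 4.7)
The plan is to exhibit an explicit parameter $t_0 \in \Per_1(-2)^{cm}$ that lies in $\Bif^- \setminus \Bif^+$, which immediately yields $\Bif^+ \neq \Bif^-$. I take $t_0 = \lambda - 2 = -4$, the very parameter at which Lemma \ref{H1<=H2} failed to separate the two potentials, and analyze the two critical orbits separately.

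For $c_+$: at $t=-4$ one computes $f_{-4}(1) = -2/(1-4+1) = 1$, so $c_+ = +1$ is a superattracting fixed point of $f_{-4}$. By the Implicit Function Theorem, there is an analytic family of attracting fixed points $p(t)$ near $+1$ for $t$ in some neighborhood $U$ of $-4$; the (constant) critical point $c_+ \equiv +1$ lies in the immediate basin of $p(t)$ throughout $U$, so $\{t \mapsto f_t^n(c_+)\}$ is normal on $U$ and $t_0 \notin \Bif^+$. For $c_-$: direct computation of the forward orbit of $c_- = -1$ under $f_{-4}$ gives
\[
-1 \;\xrightarrow{f_{-4}}\; \tfrac{1}{3} \;\xrightarrow{f_{-4}}\; 3 \;\xrightarrow{f_{-4}}\; 3,
\]
and from $f_t'(z) = -2(1-z^2)/(z^2+tz+1)^2$ one finds $f_{-4}'(3) = 4$, so $z=3$ is a repelling fixed point; thus $c_-$ is strictly preperiodic to a repelling cycle at $t_0$. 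Moreover, if $z(t)$ denotes the persistent fixed point with $z(-4) = 3$, implicit differentiation of $z^2 + tz + 3 = 0$ gives $z'(-4) = -3/2$, while a short direct computation yields $\frac{d}{dt}\big|_{t=-4} f_t^2(-1) = 5/2$. These derivatives disagree, so the graphs of $z(t)$ and $t \mapsto f_t^2(-1)$ meet transversely at $t_0$; in particular, the preperiodicity relation $f_t^2(-1) = z(t)$ is not maintained identically in $t$.

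The main potential obstacle is the concluding step: justifying that this strictly-preperiodic-to-repelling behavior at $t_0$, not persistent in $t$, places $t_0$ in $\Bif^-$. This is a standard consequence of the characterization of $J$-stability (see \cite{D:current, D:lyap, Dujardin:Favre:critical}), and may be proved directly by a Montel-type argument: using the local linearization of $f_t$ at the persistent repelling fixed point $z(t)$ one produces, via its inverse branches, infinitely many disjoint persistent preperiodic orbits accumulating on $z(t)$; the transversality established above forces $f_t^{n}(c_-)$ to hit different such orbits in every neighborhood of $t_0 = -4$, ruling out normality of $\{t \mapsto f_t^n(c_-)\}_n$. This gives $t_0 \in \Bif^-$ and, combined with $t_0 \notin \Bif^+$, completes the proof.
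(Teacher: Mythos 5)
Your proposal is correct, but it takes a genuinely different route from the paper. The paper works on the real ray $t>2\sqrt{3}$, where an attracting fixed point exists, and uses an intermediate-value argument on an explicit polynomial $\ell(t)$ to produce a parameter at which the critical point $+1$ lies on a superattracting $3$-cycle; it then invokes Proposition \ref{compactness} (both critical points lie in the basin of the attracting fixed point on the unbounded stable component) to conclude that the two critical points have distinct stable behaviors there, which separates $\Bif^+$ from $\Bif^-$. You instead exhibit an explicit point of the symmetric difference: at $t_0=\lambda-2=-4$ the critical point $+1$ is a superattracting fixed point (so $t_0\notin\Bif^+$, by your bounded-orbit/normality argument, which is fine), while $-1\mapsto 1/3\mapsto 3$ lands on the fixed point $z=3$ of multiplier $4$, and your derivative computation ($z'(-4)=-3/2$ versus $\frac{d}{dt}f_t^2(-1)\big|_{t=-4}=5/2$, both of which check out) shows this collision is non-persistent, whence $t_0\in\Bif^-$. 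Your approach buys an explicit parameter in $\Bif^-\setminus\Bif^+$ and avoids the global structure of the unbounded stable component; the paper's argument stays entirely within facts it has already established (Proposition \ref{compactness} and the stability characterizations) and needs no activity criterion at preperiodic parameters. The one soft spot is your last step: the sketch about inverse branches producing preperiodic orbits that the critical orbit must ``hit'' does not, as phrased, contradict normality. The fact you need--a critical orbit meeting a repelling cycle non-persistently forces activity--is indeed standard (see \cite{Dujardin:Favre:critical} or the stability theory in \cite{McMullen:CDR}), and a clean direct proof runs differently: choose a disk $D(z(t),\delta)$ on which $f_t$ expands distances to $z(t)$ by a factor $\mu>1$ uniformly in $t$ near $-4$; for $t\neq -4$ the point $f_t^2(-1)\neq z(t)$ must therefore leave this disk after $n(t)$ iterates with $n(t)\to\infty$ as $t\to -4$, while $f_{-4}^{n}(-1)=3$ for all $n\geq 2$, contradicting equicontinuity of $\{t\mapsto f_t^n(-1)\}$ at $t_0$. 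With that substitution (or simply a citation), your proof is complete.
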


\begin{proof}
For $\lambda=-2$ and $t=2\sqrt{3}$, the map $f_{\lambda,t}=f_{-2,2\sqrt{3}}$ has a fixed point at $z=-\sqrt{3}$ with multiplier $1$.  For all $t> 2\sqrt{3}$, one of the fixed points is attracting.  Using Proposition \ref{compactness}, to show the bifurcation sets are different, it suffices to show the existence of $t_0>2\sqrt{3}$ so that $f_{-2, t_0}$ has a second attracting cycle of period $>1$.  In that case, both of the critical points cannot lie in the basin of the attracting fixed point.

For $t> 2\sqrt{3}$, consider the first three iterations of $1$ under $f_{-2,t}$
$$1\mapsto \frac{-2}{2+t}\mapsto\frac{8+4t}{8-t^2}\mapsto f_{-2,t}^3(1)=\frac{-8(2+t)(8-t^2)}{(8-t^2)^2+4t(t+2)(8-t^2)+16(2+t)^2}.$$
To see $f_{-2, t}^3(1)=1$ has a solution for $t>2\sqrt{3}$, define
\begin{eqnarray*}
\ell(t)&=&(8-t^2)^2+4t(t+2)(8-t^2)+16(2+t)^2+8(2+t)(8-t^2)\\
&=&16(2+t)^2+4(2+t)^2(8-t^2)+(8-t^2)^2
\end{eqnarray*}
Note that
$$\ell(2\sqrt{3})=16>0;  \lim_{t\rightarrow+\infty}\ell(t)=-\infty,$$
this implies that $\ell(t)=0$ has a solution for some $t_0>2\sqrt{3}$. Since $f_{-2, t_0}(1)\neq1$, the critical point  $1$ is of periodic $3$ for $f_{-2,t_0}$.
 \end{proof}

\subsection{Proof of Theorem \ref{distinct measures}.} First, suppose $|\lambda|<1$ or $\Re \lambda>1$. By Lemma \ref{bifset}, the two bifurcation sets $\Bif^+$ and $\Bif^-$ are not equal.  As the supports of $\mu^+_\lambda$ and $\mu^-_\lambda$ are exactly $\Bif^+$ and $\Bif^-$, it follows that $\mu^+_\lambda\neq \mu^-_\lambda$.

Second, if $\lambda=-2$, by Proposition \ref{QPer1(-2)}, we know $\Bif^+ \neq \Bif^-$ which implies  $\mu^+_\lambda \neq \mu^-_\lambda$.

Finally, assume $\Re \lambda\leq 1$ and $\lambda\neq 0, -2$ and suppose that  $\mu^+_\lambda=\mu^-_\lambda$.  Then by  Lemma \ref{H1=H2}, we have  $H_\lambda^+(\lambda-2)=H_\lambda^-(\lambda-2)$.  However, this contradicts  Lemma \ref{H1<=H2}.
\qed

\bigskip
\section{Homogeneous potential functions}
\label{homogeneous}

In this section, we study the potential functions $H_\lambda^+(t)$ and $H_\lambda^-(t)$ of (\ref{H}) in more detail.  From Lemma \ref{H bound} we know that $H_\lambda^\pm(t) = O(\log|t|)$ as $t\to \infty$.  Here, we refine this estimate and prove Theorem \ref{convergence}, the first step in our proof of Theorem \ref{equidistribution}.

\subsection{The homogeneous potential functions on parameter space.}
\label{F_n}
Fix $\lambda\not=0$.  Working in homogeneous coordinates, we write
	$$F_t(z_1,z_2) = (\lambda z_1 z_2, z_1^2 + t z_1 z_2 +  z_2^2)$$
for a lift of $f_{\lambda,t}$ to $\C^2$, with $z = z_1/z_2$.  We will also work in homogeneous coordinates over the parameter space. Consider the two sequences of maps,
\begin{equation}  \label{F_n plus}
	F_n^+(t_1,t_2) := t_2^{2^{n-1}} F^n_{t_1/t_2}(+ 1, 1),
\end{equation}
and
\begin{equation}  \label{F_n minus}
	F_n^-(t_1,t_2) := t_2^{2^{n-1}} F^n_{t_1/t_2}(-1, 1),
\end{equation}
for $(t_1, t_2) \in \C^2$.

\begin{theorem}  \label{convergence}
The maps $F_n^\pm$ are homogeneous polynomial maps in $(t_1,t_2)$ of degree $2^{n-1}$ with nonzero resultants.  For each $\lambda\not=0$ such that
	$$\gamma(\lambda)=\frac{1}{2}\sum_{i=1}^{+\infty}\frac{1}{2^i}\log|1+\lambda+\cdots+\lambda^{i}|$$
converges, the limits
	$$\lim_{n\to\infty} \frac{1}{2^{n-1}} \log \| F_n^\pm(t_1,t_2) \|$$
converge locally uniformly on $\C^2\setminus\{(0,0)\}$ to continuous functions $G^\pm$ satisfying
	$$G^\pm(t_1,t_2) = \begin{cases}
 				2H^\pm_\lambda(t_1/t_2)+\log |t_2| &\text{ if } t_2\neq0,\\
				\log|t_1|+ \gamma(\lambda) &\text{ if } t_2=0.
			\end{cases}$$
\end{theorem}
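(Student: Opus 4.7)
The plan is to argue four assertions in sequence---polynomial structure of $F_n^\pm$, non-vanishing of the resultants, pointwise convergence, and local uniform convergence---with the last being where the real work lies.  Throughout I set $\sigma_k:=1+\lambda+\cdots+\lambda^k$ and focus on $F_n^+$; the $F_n^-$ case follows by the symmetry $F_n^-(t_1,t_2)=(-P_n(-t_1,t_2),\,Q_n(-t_1,t_2))$, itself a consequence of the conjugation $F_t\circ\iota=\iota\circ F_{-t}$ with $\iota(z_1,z_2)=(-z_1,z_2)$.

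\textit{Polynomial structure and resultants.}  Induction on $n$ shows $F_n^+=(t_2\widetilde P_n,\,Q_n)$ with $\widetilde P_n$, $Q_n$ homogeneous of degrees $2^{n-1}-1$ and $2^{n-1}$: the base case is $(\lambda t_2,\,t_1+2t_2)$, and the recursion $F^{n+1}_{t_1/t_2}(1,1)=F_{t_1/t_2}(F^n_{t_1/t_2}(1,1))$ yields
\[
F_{n+1}^+=\bigl(\lambda P_n Q_n,\;P_n^2+(t_1/t_2)P_n Q_n+Q_n^2\bigr),
\]
which is polynomial precisely because $t_2\mid P_n$.  A common projective zero of $(P_n,Q_n)$ at $t_2\neq 0$ would force $F^n_{t_1/t_2}(1,1)=(0,0)$, impossible for $\lambda\neq 0$; at $t_2=0$ one has $F_n^+(t_1,0)=(0,t_1^{2^{n-1}}b_n)$ with $b_n:=Q_n(1,0)$.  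Setting $a_n:=\widetilde P_n(1,0)$ and $r_n:=a_n/b_n$, the recursions $a_{n+1}=\lambda a_n b_n$, $b_{n+1}=b_n(a_n+b_n)$ give inductively $r_n=\lambda^n/\sigma_{n-1}$ and $b_{n+1}=b_n^2\,\sigma_n/\sigma_{n-1}$, so $b_n\neq 0$ iff $\sigma_k\neq 0$ for $1\leq k<n$---guaranteed by convergence of $\gamma(\lambda)$.

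\textit{Pointwise convergence.}  For $t_2\neq 0$, the definition of $H_\lambda^+$ gives immediately $\tfrac{1}{2^{n-1}}\log\|F_n^+(t_1,t_2)\|=\log|t_2|+\tfrac{2}{2^n}\log\|F^n_{t_1/t_2}(1,1)\|\to\log|t_2|+2H_\lambda^+(t_1/t_2)$.  At $t_2=0$, iterating $b_{n+1}=b_n^2\sigma_n/\sigma_{n-1}$ from $b_1=1$ produces the closed form $b_n=\sigma_{n-1}\prod_{k=1}^{n-2}\sigma_k^{2^{n-k-2}}$, whence
\[
\tfrac{1}{2^{n-1}}\log|b_n|=\tfrac{1}{2^{n-1}}\log|\sigma_{n-1}|+\sum_{k=1}^{n-2}\tfrac{1}{2^{k+1}}\log|\sigma_k|\longrightarrow\gamma(\lambda);
\]
combined with homogeneity this delivers the claimed limit $\log|t_1|+\gamma(\lambda)$.

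\textit{Local uniform convergence.}  Each $g_n:=\tfrac{1}{2^{n-1}}\log\|F_n^+\|$ is plurisubharmonic, and Lemma~\ref{H bound} provides a uniform upper bound $g_n\leq \log\|(t_1,t_2)\|+O(1)$ on compact subsets of $\C^2\setminus\{(0,0)\}$, so Hartogs' lemma delivers $\limsup g_n\leq G^+$ locally uniformly.  For the reverse inequality I would telescope $g_m-g_n=\sum_{k=n}^{m-1}2^{-k}\log(\|F_{k+1}^+\|/\|F_k^+\|^2)$ and prove uniform convergence of $\sum 2^{-k}\sup_K|\log(\|F_{k+1}^+\|/\|F_k^+\|^2)|$ on every compact $K\subset\C^2\setminus\{(0,0)\}$.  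Off $\{t_2=0\}$ this is routine from Lemma~\ref{H bound}.  Near any point $(t_1^0,0)$ with $t_1^0\neq 0$, by homogeneity I reduce to $t_1=1$, $|t_2|<\varepsilon$; setting $\rho_k(t_2):=\widetilde P_k(1,t_2)/Q_k(1,t_2)$, the recursion becomes $\rho_{k+1}=\lambda\rho_k/(1+\rho_k+t_2^2\rho_k^2)$ with $\rho_k(0)=r_k$, and $\|F_{k+1}^+(1,t_2)\|/\|F_k^+(1,t_2)\|^2\sim|1+\rho_k(t_2)|$ for small $t_2$.  Summability then reduces to $\sum 2^{-k}\log|\sigma_k/\sigma_{k-1}|=\gamma(\lambda)$ plus an error controlled by $|\rho_k(t_2)-r_k|$.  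The main obstacle is maintaining the perturbation estimate $|\rho_k(t_2)-r_k|=O(|t_2|^2)$ uniformly in $k$: the linearization of the $\rho$-recursion at $t_2=0$ has multiplier $\lambda(\sigma_{k-1}/\sigma_k)^2$, bounded under the hypothesis on $\gamma(\lambda)$, which lets one propagate the perturbation bound through all iterates without blow-up---this mirrors the strategy of \cite{DWY:Lattes}.
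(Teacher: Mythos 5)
Your algebraic groundwork is sound and essentially matches the paper: the induction giving $t_2\mid P_n$, the identification of the leading coefficients $a_n,b_n$ with the paper's $B_n,C_n$, the closed form $b_n=\sigma_{n-1}\prod_{k\le n-2}\sigma_k^{2^{n-k-2}}$ (where $\sigma_k=1+\lambda+\cdots+\lambda^k$), and the pointwise limits on both strata are all correct.  The gap is in the locally uniform convergence near $t_2=0$, which is the actual content of the theorem.  Your lower-bound strategy hinges on the claim that the linearization multiplier $\lambda(\sigma_{k-1}/\sigma_k)^2$ is ``bounded under the hypothesis on $\gamma(\lambda)$,'' and hence that $|\rho_k(t_2)-r_k|=O(|t_2|^2)$ uniformly in $k$.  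Convergence of $\gamma(\lambda)$ only forces $2^{-k}\log|\sigma_k|\to 0$; it allows $|\sigma_k|$ to be as small as $e^{-\eps_k 2^k}$ with $\eps_k\to 0$ along a sparse sequence, so $|\sigma_{k-1}/\sigma_k|^2$ can be of size $e^{2\eps_k 2^k}$ and the multiplier is wildly unbounded.  This is not a fringe case: it is exactly the regime $|\lambda|=1$, $\lambda$ not a root of unity, which the theorem must cover because it is invoked at the archimedean places $v$ of $\Q(\lambda)$ with $|\lambda|_v=1$ in Theorem \ref{non-archimedean convergence}.  There the unperturbed orbit $r_k=\lambda^k/\sigma_{k-1}$ itself has excursions of size $1/|\sigma_{k-1}|$, for fixed $t_2\neq 0$ the perturbed orbit $\rho_k(t_2)$ eventually leaves any neighborhood of $r_k$, and since $\Bif^\pm$ is unbounded (Proposition \ref{compactness}) there are unstable parameters at every scale of $|t_2|$; the proposed normal convergence of $\sum_k 2^{-k}\sup_K\bigl|\log(\|F_{k+1}^+\|/\|F_k^+\|^2)\bigr|$ on a compact $K$ meeting $\{t_2=0\}$ is therefore not something one should expect to be able to prove (the paper never proves it).  What the paper proves instead is the weaker $\eps$--$\delta$--$N$ statement of Lemmas \ref{upper bound by epsilon} and \ref{lower bound by epsilon}: for each small $|s|$ it chooses a scale $j=j(|s|)$ at which the constant coefficient $C_j$ dominates $\|F_j^+(1,s)\|$, and then propagates to all $n\ge j$ using only the one-sided telescoping bound of Lemma \ref{H bound}, whose error $2^{1-j}\log(c|s|)$ is $O(\eps)$ precisely because of how $j(|s|)$ is chosen.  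Some two-scale device of this kind is needed; a perturbation bound uniform in $k$ is not available.

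The upper bound step also has a problem.  The inequality $g_n\le\log\|(t_1,t_2)\|+O(1)$ does not follow from Lemma \ref{H bound}: iterating $\|F_t(z)\|\le C|t|\,\|z\|^2$ gives $\|F^n_{t_1/t_2}(1,1)\|\le (C|t_1/t_2|)^{2^n-1}$ and hence only $g_n\le 2\log|t_1|-\log|t_2|+O(1)$, which blows up as $t_2\to 0$.  The uniform bound is true, but it requires controlling the coefficients of $F_n^+$ (this is what the paper's Lemma \ref{coefficients estimate} and the inductive norm bound inside Lemma \ref{upper bound by epsilon} accomplish).  More seriously, the appeal to Hartogs' lemma with comparison function $G^+$ is circular: Hartogs requires the comparison function to be continuous, and the continuity (indeed even the upper bound $G^+(1,s)\le\gamma(\lambda)+\eps$ for small $|s|$) of the pointwise limit across $t_2=0$ is precisely what is being proved.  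What is needed is an $n$-uniform bound $g_n(1,s)\le\gamma(\lambda)+\eps$ for $|s|<\delta$, $n\ge N$, i.e.\ the content of Lemma \ref{upper bound by epsilon}, and that again comes from coefficient estimates rather than from Lemma \ref{H bound} or general pluripotential theory.
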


\smallskip
\begin{remark}
It is easy to see that $\gamma(\lambda)$ is finite for all $\lambda\in\C$ with $|\lambda|\not=1$ and for $\lambda=1$.  In the next section, we observe that it is finite for all algebraic numbers $\lambda$ that are not roots of unity.
\end{remark}

For the proof of Theorem \ref{convergence}, it suffices to consider the maps $F_n^+$ and the function $G^+$; the results for $F_n^-$ and $G^-$ follow by symmetry.  Define polynomials $P_n(t)$ and $Q_n(t)$ by
	$$F_{t}^n(1,1) = (P_n(t), Q_n(t)).$$
One may verify by induction that the degree of $P_n$ is $2^{n-1}-1$, and the degree of $Q_n$ is $2^{n-1}$.  This shows that $F_n^+$ is polynomial in $(t_1,t_2)$.  Also, since $F_t^{-1}\{(0,0)\} = \{(0,0)\}$ for all $t\in\C$, we see that $P_n$ and $Q_n$ have no common roots.  Thus, $F_n^+$ has nonzero resultant in $(t_1,t_2)$.

For the convergence statement, note that standard arguments from complex dynamics imply that the convergence is uniform away from $t_2 = 0$.  In fact, the escape-rate function for $F_t$ will be continuous in both the dynamical variable $(z_1,z_2)$ and the parameter $t$ for any holomorphic family \cite{Hubbard:Papadopol, Fornaess:Sibony}.  It follows immediately from the definitions that
	$$G^+(t_1,t_1) = 2 H^+_\lambda(t_1/t_2) + \log|t_2|$$
whenever $t_2\not=0$.

The remainder of this section is devoted to the proof of uniform convergence near $t_2=0$.  The proof of Theorem \ref{convergence} will be complete once we have proved Lemmas \ref{upper bound by epsilon} and \ref{lower bound by epsilon} below.  We make an effort to include all details, especially the steps that will be repeated in the nonarchimedean setting in the following section.

\subsection{Convergence near $t_2=0$.}
Throughout this subsection, we set $t_1 =1$ and $t_2 = s$.  We have $$F_{n+1}^+(1, s) = F_{1/s} (F_n^+(1, s)).$$

We begin by looking at the coefficients of $F_n^+(1,s)$.  Write
\begin{equation} \label{coefficients}
   F_n^+(1,s)=(sB_n(\lambda)+s^2A_n(\lambda)+O(s^3), C_n(\lambda)+sD_n(\lambda)+O(s^2)).
\end{equation}
Note that $B_1(\lambda)=\lambda$, $C_1(\lambda)=1$, and
$$B_{n+1}(\lambda)=\lambda B_n(\lambda)C_n(\lambda) \quad\mbox{ and }\quad C_{n+1}(\lambda)=C_n(\lambda)(B_n(\lambda)+C_n(\lambda))$$
for all $n\geq 1$.  By induction, we obtain explicit expressions
\begin{eqnarray}
\label{B coeff}
B_n(\lambda)&=& \lambda^n(1+\lambda)^{2^{n-3}}(1+\lambda+\lambda^2)^{2^{n-4}}\cdots(1+\lambda+\cdot\cdot+\lambda^{n-2})^{2^0} \\
\label{C coeff}
C_n(\lambda) &=& B_n(\lambda)(1+\lambda+\cdot\cdot+\lambda^{n-1})/\lambda^n
\end{eqnarray}
for all $n\geq 3$.

\begin{lemma} \label{coefficients estimate}
The coefficients $A_n(\lambda)$, $B_n(\lambda)$, $C_n(\lambda)$, and $D_n(\lambda)$ of (\ref{coefficients}) satisfy
\begin{eqnarray*}
e^{\gamma(\lambda)}
&=&\lim_{n\to \infty} |B_n(\lambda)|^\frac{1}{2^{n-1}}=\lim_{n\to \infty} |C_n(\lambda)|^\frac{1}{2^{n-1}} \\
&\geq& \limsup_{n\to \infty} |A_n(\lambda)|^\frac{1}{2^{n-1}},  \limsup_{n\to \infty} |D_n(\lambda)|^\frac{1}{2^{n-1}}
\end{eqnarray*}
where $\gamma(\lambda)$ is defined in Theorem \ref{convergence}.
\end{lemma}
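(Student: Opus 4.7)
The plan is to first handle $B_n$ and $C_n$ directly from the explicit product formulas (\ref{B coeff}) and (\ref{C coeff}), and then to control $A_n$ and $D_n$ through coupled recurrences driven by the sizes of $B_n$ and $C_n$.

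Taking logarithms of (\ref{B coeff}) and dividing by $2^{n-1}$ converts the exponents $2^{n-2-i}$ into weights $1/2^{i+1}$, yielding
$$\frac{\log|B_n(\lambda)|}{2^{n-1}} \;=\; \frac{n\log|\lambda|}{2^{n-1}} \;+\; \sum_{i=1}^{n-2}\frac{1}{2^{i+1}}\log|1+\lambda+\cdots+\lambda^i|.$$
The first term vanishes in the limit, and the second is a partial sum of the series defining $\gamma(\lambda)$, so convergence of $\gamma(\lambda)$ hands over $|B_n|^{1/2^{n-1}}\to e^{\gamma(\lambda)}$ at once.  Formula (\ref{C coeff}) then yields $|C_n|^{1/2^{n-1}}\to e^{\gamma(\lambda)}$ after observing that $2^{-n}\log|1+\lambda+\cdots+\lambda^{n-1}|\to 0$, which is automatic once the summands of $\gamma(\lambda)$ tend to $0$.

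For $A_n$ and $D_n$ I would first derive recurrences.  Writing $F_n^+(1,s) = (P_n(s), Q_n(s))$, the homogeneity of $F_t$ together with $F_{n+1}^+(1,s) = s^{2^n} F_{1/s}(s^{-2^{n-1}}(P_n,Q_n))$ gives $F_{n+1}^+(1,s) = (\lambda P_n Q_n,\; P_n^2 + P_n Q_n/s + Q_n^2)$; substituting the Taylor expansions $P_n = sB_n + s^2 A_n + O(s^3)$ and $Q_n = C_n + sD_n + O(s^2)$ and matching the $s$- and $s^2$-coefficients produces
$$A_{n+1} = \lambda(B_n D_n + A_n C_n), \qquad D_{n+1} = A_n C_n + D_n(B_n + 2C_n)$$
in parallel with the stated recurrences for $B_n$ and $C_n$.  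Setting $M_n = \max(|A_n|,|D_n|)$ decouples the system into a single scalar estimate $M_{n+1} \leq K(|B_n|+|C_n|)\,M_n$ with $K = \max(|\lambda|,3)$.  Fixing $\epsilon>0$, the first step furnishes $N$ so that $|B_n|,|C_n| \leq (e^{\gamma(\lambda)}+\epsilon)^{2^{n-1}}$ for $n\geq N$; iterating the bound and taking $(2^{n-1})$-th roots lets the exponents sum geometrically and yields $\limsup M_n^{1/2^{n-1}} \leq e^{\gamma(\lambda)}+\epsilon$.  Letting $\epsilon\to 0$ concludes the argument.

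The main technical point is the coupling in the $A$-$D$ system: each recurrence involves the other as well as $B_n$ and $C_n$.  I handle this by bounding them simultaneously via $M_n$; the multiplicative constant $K$ and the initial datum $M_N$ that appear during iteration disappear upon taking $(2^{n-1})$-th roots, so nothing further obstructs the desired estimate.
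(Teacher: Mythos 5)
Your argument is correct and follows essentially the same route as the paper: read off the limit for $B_n$ and $C_n$ from the explicit product formulas, then control $A_n$ and $D_n$ via the same coupled recurrences $A_{n+1}=\lambda(B_nD_n+A_nC_n)$ and $D_{n+1}=A_nC_n+D_n(B_n+2C_n)$. The only (harmless) difference is in how the recursion is estimated: the paper factors the dominant product $\prod_i(1+\lambda+\cdots+\lambda^i)^{2^{n-2-i}-2}$ out of $A_n$ and $D_n$ and bounds the residual sequences $A_n^*,D_n^*$ explicitly, whereas you iterate $\max(|A_{n+1}|,|D_{n+1}|)\le K(|B_n|+|C_n|)\max(|A_n|,|D_n|)$ against the bound $|B_n|,|C_n|\le(e^{\gamma(\lambda)}+\epsilon)^{2^{n-1}}$ for large $n$; both yield the stated $\limsup$ bound after taking $2^{n-1}$-st roots.
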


\proof
With the explicit expressions for $B_n$ and $C_n$ given above, the limiting value is clearly $e^{\gamma}$; it suffices to show the bound for $A_n$ and $D_n$.  By induction, we find
	$$A_{n+1}(\lambda)=\lambda \left(B_n(\lambda)D_n(\lambda)+C_n(\lambda)A_n(\lambda)\right),$$
and
	$$D_{n+1}(\lambda)=C_n(\lambda)(2D_n(\lambda)+A_n(\lambda))+D_n(\lambda)B_n(\lambda),$$
with $A_1(\lambda)=0$ and $D_1(\lambda)=2$.

The explicit expressions for $B_n(\lambda)$ and $C_n(\lambda)$ and these inductive formulas for $A_n(\lambda), D_n(\lambda)$, show that
$$A_{n}(\lambda)=(1+\lambda)^{2^{n-3}-2}(1+\lambda+\lambda^2)^{2^{n-4}-2}\cdot\cdot (1+\lambda+\cdots+\lambda^{n-3})^{0}A_{n}^*(\lambda),$$
and
$$D_{n}(\lambda)=(1+\lambda)^{2^{n-3}-2}(1+\lambda+\lambda^2)^{2^{n-4}-2}\cdot \cdot(1+\lambda+\cdots+\lambda^{n-3})^{0}D_{n}^*(\lambda),$$
for sequences $A_{n}^*(\lambda)$ and $D_{n}^*(\lambda)$ given inductively by
$$A_{n+1}^*(\lambda)=\lambda^{n+1} D_n^*(\lambda)+(\lambda+\lambda^2 +\cdot\cdot +\lambda^{n})A_n^*(\lambda)$$
and
$$D_{n+1}^*(\lambda)=(2(1+\lambda +\cdot\cdot +\lambda^{n-1})+\lambda^n)D_n^*(\lambda)+(1+\lambda +\cdot\cdot +\lambda^{n-1})A_n^*(\lambda)$$
with $A_1^*(\lambda)=0$ and $D_1^*(\lambda)=2$.  Finally, the induction formulas for $A_{n}^*(\lambda)$ and $D_{n}^*(\lambda)$ imply that
\begin{eqnarray*}
\max (|A_n^*(\lambda)|, |D_{n}^*(\lambda)|)&\leq& (2+2|\lambda|)^n \max (|A_{n-1}^*(\lambda)|, |D_{n-1}^*(\lambda)|)\\
                                                     &\leq& (2+2|\lambda|)^{2n-1} \max (|A_{n-2}^*(\lambda)|, |D_{n-2}^*(\lambda)|)\\
                                                     &\leq& 2(2+2|\lambda|)^{1+2+\cdots+n}
\end{eqnarray*}
Consequently, we have $ \lim_{n\to \infty} \sup |A_n(\lambda)|^\frac{1}{2^{n-1}}, \lim_{n\to \infty} \sup |D_n(\lambda)|^\frac{1}{2^{n-1}}\leq e^{\gamma(\lambda)}$.
\qed

\bigskip
The growth of the coefficients $B_n$ and $C_n$ in Lemma \ref{coefficients estimate} provides a uniform upper bound on the size of $2^{1-n} \log\|F_n^+(1,s)\|$ for small $s$:

\begin{lemma} \label{upper bound by epsilon}
For any given $\eps>0$, there exists a $\delta>0$ and an integer $N>0$ so that
	$$\frac{1}{2^{n-1}} \log \| F_n^+(1,s) \| - \gamma(\lambda) < \eps$$
for all $|s| < \delta$ and all $n\geq N$.
\end{lemma}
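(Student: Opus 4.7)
The plan is to bypass the apparent singularity of $F_{1/s}$ at $s=0$ by introducing a rescaled iteration. Define $G_s:\C^2\to\C^2$ by
\[
	G_s(y_1,y_2) = (\lambda y_1 y_2,\ s^2 y_1^2 + y_1 y_2 + y_2^2),
\]
a homogeneous polynomial map of degree $2$ depending polynomially on $s$. I would first verify by induction on $n$ that $F_n^+(1,s) = (s\, y_1^{(n)}(s),\, y_2^{(n)}(s))$, where $y^{(n+1)} = G_s(y^{(n)})$ with initial condition $y^{(1)} = (\lambda,\, 1+2s)$; the point is that the factor of $s$ in the first coordinate of $F_n^+(1,s)$ is exactly what cancels the $1/s$ appearing in $F_{1/s}$, so the resulting recursion is polynomial in $s$.

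The key advantage is that $\|G_s(y)\| \leq (|\lambda|+3)\,\|y\|^2$ uniformly on $|s|\leq 1$, with no $1/s$ blow-up. Setting $M := |\lambda|+3$, iteration of this estimate gives $\|y^{(N+k)}(s)\| \leq M^{2^k-1}\,\|y^{(N)}(s)\|^{2^k}$ for all $k\geq 0$ and all $|s|\leq 1$.

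Given $\eps > 0$, I would use Lemma \ref{coefficients estimate} to choose $N$ large enough so that $\max(|B_N|,|C_N|)^{1/2^{N-1}} < e^{\gamma(\lambda)+\eps/3}$ and simultaneously $2^{1-N}\log(2M) < \eps/3$. Continuity of $y^{(N)}(s)$ in $s$ together with $y^{(N)}(0) = (B_N, C_N)$ allows one to choose $\delta\in(0,1)$ so small that $\|y^{(N)}(s)\| \leq 2\max(|B_N|,|C_N|)$ for all $|s|<\delta$. Combining this with $\|F_n^+(1,s)\|\leq \|y^{(n)}(s)\|$ for $|s|\leq 1$ and the identity $2^k/2^{n-1} = 2^{1-N}$ when $n = N+k$, the estimate telescopes to
\[
	\frac{1}{2^{n-1}}\log\|F_n^+(1,s)\| \ \leq\ 2^{1-N}\log(2M)\, +\, 2^{1-N}\log\max(|B_N|,|C_N|) \ <\ \gamma(\lambda) + \eps.
\]

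The main obstacle, and the reason this reformulation is essential, is that the naive bound $\|F_{1/s}(z)\|\leq (3/|s|)\,\|z\|^2$ from Lemma \ref{H bound} contributes a term of order $2^{1-N}\log(1/|s|)$ after iteration, which is unbounded as $s\to 0$; this would prevent uniform control on any neighborhood of $s=0$. The $G_s$ recursion encodes precisely the vanishing of the first coordinate of $F_n^+(1,s)$ to order at least one at $s=0$, absorbing the offending $1/s$ factor and producing the desired uniform estimate.
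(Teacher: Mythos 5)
Your proposal is correct and is essentially the paper's own argument: writing $F_n^+(1,s)=(s\,p_n(s),q_n(s))$ (your $y^{(n)}$) turns the recursion $F_{n+1}^+(1,s)=F_{1/s}(F_n^+(1,s))$ into exactly your polynomial map $G_s$, and the paper likewise chooses $N$ via Lemma \ref{coefficients estimate}, chooses $\delta$ by continuity at $s=0$ where $(p_N(0),q_N(0))=(B_N,C_N)$, and iterates the bound with the same constant $|\lambda|+3$. The only difference is packaging (your telescoped estimate $\|y^{(N+k)}\|\leq M^{2^k-1}\|y^{(N)}\|^{2^k}$ versus the paper's explicit constant $R$), not substance.
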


\proof
Define polynomials $p_n(s)$ and $q_n(s)$ by
   $$F_n^+(1,s) = (sp_n(s), q_n(s))$$
so that $p_n(0) = B_n(\lambda)$ and $q_n(0) = C_n(\lambda)$.
By Lemma \ref{coefficients estimate}, there is a huge integer $N$ such that
	$$|B_N(\lambda)|, |C_{N}(\lambda)|<(1+\eps/4)^{2^{N-1}}e^{\gamma(\lambda)2^{N-1}}$$
and
	$$\frac{\log (|\lambda|+3)}{2^{N-1}} \leq \eps/2.$$
Set
   $$R:=(1+\eps/4)^{2^{N-1}}e^{\gamma(\lambda)2^{N-1}}(|\lambda|+3).$$
Since $|B_{N}(\lambda)|, |C_{N}(\lambda)|< R/(|\lambda|+3)$, we can choose a very small $\delta>0$ such that
   $$ |p_{N}(s)|, |q_{N}(s)| < R/(|\lambda|+3),$$
for any $s$ with $|s|\leq \delta$.

Recall that $F_{n+1}^+(1,s) = F_{1/s}(F_n^+(1,s))$ for all $n$.  Thus,
$$|p_{N+1}(s)|=|\lambda  p_{N}(s) q_{N}(s)|< R^2/(|\lambda|+3)$$
and
$$|q_{N+1}(s)|=|s^2 p_N(s)^2+q_{N}(s)^2+p_{N}(s)q_{N}(s)|<R^2/(|\lambda|+3).$$
Inductively, we find
$$|p_{N+i}(s)|<R^{2^i}/(|\lambda|+3)$$
$$|q_{N+i}(s)|<R^{2^i}/(|\lambda|+3)$$
for any $s$ with $|s|\leq \delta$ and any $i\geq 0$. Consequently, as the integer $N$ satisfies $\log (|\lambda|+3)/2^{N-1}\leq \eps/2$ and  $|s|\leq \delta$,
\begin{eqnarray*}
\frac{\log\|F_{N+i,\lambda}(1,s)\|}{2^{N+i-1}}&<&  \frac{\log R^{2^i}/(|\lambda|+3)}{2^{N+i-1}}\\
   &<& \frac{ \log [(1+\eps/4)^{2^{N-1+i}}e^{\gamma(\lambda)2^{N-1+i}}(|\lambda|+3)^{2^i}]}{2^{N+i-1}}\\
   &<&\gamma(\lambda) +\eps/4+\log (|\lambda|+3)/2^{N-1}\\
   &<&\gamma(\lambda) +\eps.
\end{eqnarray*}
\qed

\bigskip
The corresponding lower bound on the size of $2^{1-n} \log\|F_n^+(1,s)\|$ for small $s$ is more delicate, and we use Lemma \ref{H bound} together with the estimates of Lemma \ref{coefficients estimate}.

\begin{lemma} \label{lower bound by epsilon}
For any given $\eps>0$, there exists a $\delta>0$ and an integer $N>0$ so that
	$$\frac{1}{2^{n-1}} \log \| F_n^+(1,s) \| - \gamma(\lambda) > -\eps$$
for all $|s| < \delta$ and all $n\geq N$.
\end{lemma}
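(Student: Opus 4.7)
The plan is to mirror the proof of Lemma \ref{upper bound by epsilon}, iterating the \emph{lower} bound $\|F_{1/s}(z)\|\geq c|s|\|z\|^2$ from Lemma \ref{H bound} in place of the upper bound; the one new subtlety is that the iteration's multiplicative factor $(c|s|)^{2^i}$ degrades for very small $|s|$, so I will supplement it near $s=0$ with a Cauchy-coefficient estimate that leverages the upper bound already established in Lemma \ref{upper bound by epsilon}.

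First, given $\eps>0$, I will fix $N$ large enough so that $\log|C_N(\lambda)|/2^{N-1}>\gamma(\lambda)-\eps/8$ (by Lemma \ref{coefficients estimate}) and $\log 2/2^{N-1}<\eps/8$, and I will import $\delta_0>0$ from Lemma \ref{upper bound by epsilon} (applied with $\eps/16$) so that $\|F_n^+(1,s)\|\leq e^{(\gamma+\eps/16)2^{n-1}}$ for $|s|<\delta_0$ and $n\geq N$. A standard induction using Lemma \ref{H bound} then yields, for any $m\geq N$, $|s|\leq 1/c$, and $i\geq 0$,
$$\frac{\log\|F_{m+i}^+(1,s)\|}{2^{m+i-1}} \;\geq\; \frac{(1-2^{-i})\log(c|s|)}{2^{m-1}} + \frac{\log\|F_m^+(1,s)\|}{2^{m-1}},$$
and the first term lies in $[\log(c|s|)/2^{m-1},0]$.

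Next, writing $F_n^+(1,s)=(sp_n(s),Q_n(s))$ so that $Q_n(0)=C_n(\lambda)$, I will apply Cauchy's coefficient estimate to $Q_n$ on the disk $|s|\leq\delta_0$, using the upper bound from Lemma \ref{upper bound by epsilon} to bound $\max_{|s|=\delta_0}|Q_n(s)|\leq e^{(\gamma+\eps/16)2^{n-1}}$. This gives $|Q_n(s)-C_n(\lambda)|\leq(2|s|/\delta_0)\,e^{(\gamma+\eps/16)2^{n-1}}$ for $|s|\leq\delta_0/2$; together with $|C_n(\lambda)|\geq e^{(\gamma-\eps/8)2^{n-1}}$, it follows that $|Q_n(s)|\geq|C_n(\lambda)|/2$ on the disk $|s|\leq b_n:=(\delta_0/4)\,e^{-(3\eps/16)2^{n-1}}$, and hence $\log\|F_n^+(1,s)\|/2^{n-1}\geq\gamma(\lambda)-\eps/4$ for all such $s$.

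Finally I will take $\delta:=\min(\delta_0/2,b_N,1/c)$ and, for each $|s|<\delta$, set $m(s):=\min\{m\geq N:|s|\geq e^{-\eps 2^{m-1}/2}/c\}$. For $N\leq n<m(s)$ the Cauchy regime applies at index $n$, provided $N$ has been chosen large enough that $4/(c\delta_0)\leq e^{(5\eps/16)2^{N-1}}$ (so that $e^{-\eps 2^{n-1}/2}/c\leq b_n$ for all $n\geq N$), and this gives the desired bound at once. For $n\geq m(s)$, Cauchy at $m(s)$ gives $\log\|F_{m(s)}^+(1,s)\|/2^{m(s)-1}\geq\gamma-\eps/4$, and iterating from $m(s)$ loses at most $\eps/2$ from the first term by the defining property of $m(s)$, so the total is at least $\gamma-3\eps/4>\gamma-\eps$. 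The main obstacle here is not conceptual but bookkeeping: choosing $N$ large enough that the Cauchy threshold $b_n\sim e^{-(3\eps/16)2^{n-1}}$ dominates the iteration threshold $e^{-\eps 2^{n-1}/2}/c$ for all $n\geq N$, so that the two regimes overlap and no $(n,s)$-pair with $n\geq N$ and $|s|<\delta$ falls in a gap.
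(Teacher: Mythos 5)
Your proposal has the same two--regime architecture as the paper's own proof: for $|s|$ extremely small relative to the scale $2^{n}$, the constant coefficient $C_n(\lambda)$ of the second coordinate dominates and gives $\frac{1}{2^{n-1}}\log\|F_n^+(1,s)\|\geq\gamma(\lambda)-\eps/4$; for the remaining pairs $(n,s)$ one transfers this bound from a threshold index to $n$ by iterating the lower bound of Lemma \ref{H bound}, exactly as in the paper's inequality (\ref{apply H bound}), the point being that at the threshold index the loss term $2^{1-j}\log(c|s|)$ is of size $O(\eps)$. The genuinely different ingredient is how you establish the first regime: instead of the paper's inductive bounds on the coefficient polynomials $p_n,q_n$ (propagated through $F_{n+1}^+(1,s)=F_{1/s}(F_n^+(1,s))$), you control $Q_n(s)-C_n(\lambda)$ by a Cauchy coefficient estimate on a fixed disk, using the uniform upper bound already proved in Lemma \ref{upper bound by epsilon} together with the lower bound on $|C_n(\lambda)|$ from Lemma \ref{coefficients estimate}. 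That is a legitimate economy: it recycles Lemma \ref{upper bound by epsilon} rather than redoing an induction, and the resulting radius $b_n=(\delta_0/4)e^{-(3\eps/16)2^{n-1}}$ has the same exponential shape as the paper's nested thresholds.

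There is, however, an off-by-one slip in your final case analysis. Minimality of $m(s)$ gives $|s|<e^{-\eps 2^{m(s)-2}/2}/c$, i.e.\ $|s|$ lies below the threshold attached to index $m(s)-1$; with your condition $4/(c\delta_0)\leq e^{(5\eps/16)2^{N-1}}$ this only yields $|s|\leq b_{m(s)-1}$, not $|s|\leq b_{m(s)}$, so ``Cauchy at $m(s)$'' is unjustified when $m(s)\in\{N+1,N+2\}$ (for $m(s)=N$ it follows from $|s|<\delta\leq b_N$, and for $m(s)\geq N+3$ your condition does suffice). Iterating instead from $m(s)-1$ with your present scales loses up to $\eps$, landing at $\gamma-5\eps/4$, which misses the target. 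The repair is pure constant tuning: for instance, strengthen the requirement on $N$ to $4/(c\delta_0)\leq e^{(\eps/8)2^{N-1}}$, which gives $e^{-\eps 2^{m-2}/2}/c\leq b_m$ for all $m>N$ and hence legitimizes Cauchy at $m(s)$; or define $m(s)$ using the scale $e^{-\eps 2^{m-1}/4}/c$ and iterate from $m(s)-1$, losing only $\eps/2$. Two smaller points: the iteration from Lemma \ref{H bound} requires $|s|\leq 1$ (so that $|1/s|\geq 1$), not $|s|\leq 1/c$ -- harmless since your final $\delta\leq b_N<1$ -- and you should choose $N$ so that $|C_n(\lambda)|\geq e^{(\gamma(\lambda)-\eps/8)2^{n-1}}$ for \emph{all} $n\geq N$, not just $n=N$, which Lemma \ref{coefficients estimate} permits. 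With these adjustments your argument is correct.
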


\proof
In contrast with the proof of Lemma \ref{upper bound by epsilon}, we define polynomials $p_n(s)$ and $q_n(s)$ by
   $$F_n^+(1,s) = (s(B_n(\lambda)+sp_n(s)), C_n(\lambda)+sq_n(s)).$$
By Lemma \ref{coefficients estimate}, there is a huge integer $N$, such that
	$$|A_N(\lambda)|, |D_{N}(\lambda)|<(1+\eps/8)^{2^{N-1}}e^{\gamma(\lambda)2^{N-1}},$$
   $$|B_{N+i}(\lambda)|<((1+\eps/8)e^{\gamma(\lambda)})^{2^{N-1+i}},$$
and
\begin{equation}\label{Cn esitmate}((1-\eps/8)e^{\gamma(\lambda)})^{2^{N-1+i}}< |C_{N+i}(\lambda)|<((1+\eps/8)e^{\gamma(\lambda)})^{2^{N-1+i}},
\end{equation}
for any $i\geq 0$. By increasing $N$ if necessary, we may also assume that
\begin{equation}\label{C term}
   \frac{\log\left(c\eps/8(12|\lambda|+12)^{2}\right)}{2^{N-1}}>-\eps/10,
\end{equation}
where the constant $c$ is defined in Lemma \ref{H bound}.
Set
   $$R:=((1+\eps/8)e^{\gamma(\lambda)})^{2^{N-1}}(12|\lambda|+12).$$
Since $|A_{N}(\lambda)|, |D_{N}(\lambda)|<R/(12|\lambda|+12)$, we can choose a very small $\delta>0$ such that
   $$ |p_{N}(s)|, |q_{N}(s)|<R/(12|\lambda|+12),$$
for any $s$ with $|s|\leq \delta$.  Recalling that $F_{n+1}^+(1,s) = F_{1/s}(F_n^+(1,s))$ for all $n$, the estimate (\ref{Cn esitmate}) implies
$$|p_{N+1}(s)|=|\lambda (C_{N}(\lambda) p_{N}(s)+(B_{N}(\lambda) +s p_{N}(s))q_{N}(s))|<\frac{R^2}{12|\lambda|+12}$$
and similarly, $|q_{N+1}(s)|<R^2/(12|\lambda|+12)$.  Inductively, for any $i\geq 0$ and $s$ with $|s|\leq \delta$,
\begin{equation}\label{qn estimate}
|p_{N+i}(s)|, |q_{N+i}(s)| < R^{2^i}/(12|\lambda|+12).
\end{equation}

Choose an integer $N'>N$, such that
  \begin{equation}\label{delta 3}
  \delta':=\frac{\eps(1-\eps/8)^{2^{N'-1}}}{8(1+\eps/8)^{2^{N'-1}}(12|\lambda|+12)^{2^{N'-N}}}<\delta.
  \end{equation}
For any $j\geq N'$ and  $s$ with
   $$|s|\leq \frac{\eps(1-\eps/8)^{2^{j-1}}}{8(1+\eps/8)^{2^{j-1}}(12|\lambda|+12)^{2^{j-N}}}\leq\delta',$$
by (\ref{Cn esitmate}), we have
  \begin{eqnarray*}
  \frac{\log\|F_j(1,s)\|}{2^{j-1}}&\geq&\frac{\log |C_{j}(\lambda)+sq_j(s)|}{2^{j-1}}\\
                                  &\geq& \frac{\log( |C_{j}(\lambda)|-|sq_j(s)|)}{2^{j-1}}\\
                                  &\geq&\frac{\log\left( |(1-\eps/8)^{2^{j-1}}e^{\gamma(\lambda)2^{j-1}}|-\frac{|s|R^{2^{j-N}}}{12|\lambda|+12}\right)}{{2^{j-1}}}\textup{, by (\ref{qn estimate})}\\
                                  &\geq& \gamma(\lambda)+2\log (1-\eps/8)\\
                                  &>&\gamma(\lambda) -\eps \textup{, as $\eps$ is small.}
\end{eqnarray*}
For any $n\geq N'$ and $s$ with  $|s|<\delta'$,  if $|s|\leq \frac{\eps(1-\eps/8)^{2^{n-1}}}{8(1+\eps/8)^{2^{n-1}}(12|\lambda|+12)^{2^{n-N}}}$, then the above inequality guarantees
   $$\frac{\log\|F_n^+(1,s)\|}{2^{n-1}}> \gamma(\lambda) -\eps.$$
Otherwise, by (\ref{delta 3}), there is a $j$ with $N'\leq j<n$, such that
   $$\frac{\eps(1-\eps/8)^{2^{j}}}{8(1+\eps/8)^{2^{j}}(12|\lambda|+12)^{2^{j+1-N}}}\leq |s|\leq \frac{\eps(1-\eps/8)^{2^{j-1}}}{8(1+\eps/8)^{2^{j-1}}(12|\lambda|+12)^{2^{j-N}}}.$$

From Lemma \ref{H bound}, we have
\begin{equation} \label{apply H bound}
   \frac{1}{2^{(n+i)-1}}\log\|F_{n+i}^+(1,s)\|- \frac{1}{2^{n-1}}\log\|F_n^+(1,s)\|\geq \frac{1}{2^{n-1}}\log (c|s|),
\end{equation}
for all $s$ with $|s|\leq 1$ and all $n, i\geq 1$.  Indeed, note that $F_{n+1}^+(1,s) = F_{1/s}(F_n^+(1,s))$ for all $n$.  We see that
\begin{eqnarray*}
\frac{ \|F_{n+i}^+ (1,s) \|}{\|F_n^+(1,s)\|^{2^i} }
&=& \frac{ \|F_{n+i}^+ (1,s) \|}{\|F_{n+i-1}^+(1,s)\|^2 } \left(\frac{ \|F_{n+i-1}^+ (1,s) \|}{\|F_{n+i-2}^+(1,s)\|^2 }\right)^2 \cdots \left(\frac{ \|F_{n+1}^+ (1,s) \|}{\|F_n^+(1,s)\|^2 }\right)^{2^{i-1}}  \\
&\geq&  (c |s|)^{2^i-1}.
\end{eqnarray*}
Therefore,
\begin{eqnarray*}
\frac{\log\|F_n^+(1,s)\|}{2^{n-1}}&\geq& \frac{\log\|F_j^+(1,s)\|}{2^{j-1}}+2^{1-j}\log (c|s|)\\
                                                   &\geq& \gamma(\lambda)+2\log (1-\eps/8)
                                                  +2^{1-j}\log \frac{c\eps(1-\eps/8)^{2^{j}}}{8(1+\eps/8)^{2^{j}}(12|\lambda|+12)^{2^{j+1-N}} }\\
                                                   &\geq& \gamma(\lambda)+4\log (1-\eps/8)-2\log (1+\eps/8)-\eps/10, \textup{ by (\ref{C term})}\\
                                                   &>&\gamma(\lambda)-\eps, \textup{ as $\eps$ is small.}
\end{eqnarray*}

 \qed

\bigskip
\section{Non-archimedean potential functions}
\label{non-archimedean}

In this section, we prove a non-archimedean counterpart to Theorem \ref{convergence}.  If we assume $\lambda\in\Qbar$, many of the computations of the previous section hold (and simplify) for the nonarchimedean absolute values on the number field $k = \Q(\lambda)$.  As such, we may conclude that the bifurcation measures $\mu^\pm_\lambda$ are the archimedean components of a pair of {\em quasi-adelic measures}, equipped with continuous potential functions.  We use the term ``quasi-adelic" because the measures might be nontrivial at infinitely many places, though the associated height functions (defined as a sum of all local potentials, over all places of $k$) converge.

\subsection{Defining the potential functions at each place}
Let $k$ be a number field and let $\kbar$  denote a fixed algebraic closure of $k$.  (In this article, we always take $k=\Q(\lambda)$ for $\lambda\in \Qbar$.)  Any number field $k$ is equipped with a set $\cM_k$ of pairwise inequivalent nontrivial absolute values, together with a positive integer $N_v$ for each $v \in \cM_k$, such that
\begin{itemize}
\item for each $\alpha \in k^*$, we have $|\alpha|_v = 1$ for all but finitely many $v \in \cM_k$; and
\item  every $\alpha \in k^*$ satisfies the {\em product formula}
\begin{equation} \label{product formula}
\prod_{v \in \cM_k} |\alpha|_v^{N_v} \ = \ 1 \ .
\end{equation}
\end{itemize}
For each $v \in \cM_k$, let $k_v$ be the completion of $k$ at $v$, let $\kvbar$ be an algebraic closure of $k_v$, and let $\CC_v$ denote the completion of $\kvbar$.  We work with the norm
	$$\|(z_1, z_2)\|_v = \max\{|z_1|_v, |z_2|_v\}$$
on $(\C_v)^2$.  We let $\PP^{1,an}_v$ denote the Berkovich projective line over $\CC_v$, which is a canonically defined path-connected compact Hausdorff space containing $\PP^1(\CC_v)$ as a dense subspace.  If $v$ is archimedean, then $\CC_v \cong \CC$ and $\PP^{1,an}_v = \PP^1(\CC)$.  See \cite{BRbook} for more information.

With the $v$-adic norms and $t\in \C_v$, we can define $H_{\lambda,v}^\pm$ exactly as in the archimedean case,
	$$H_{\lambda,v}^{\pm}(t) = \lim_{n\to\infty} \frac{1}{2^n} \log \| F_t^n(\pm 1, 1) \|_v.$$
The definition extends naturally to the Berkovich affine line $\A^{1,an}_v$; see \cite[Chapter 10]{BRbook}.  Recall the definition of $F_n$ from \S\ref{F_n}, now defined on $(\C_v)^2$.

\begin{theorem}  \label{non-archimedean convergence}
Fix $\lambda \in \Qbar$ with $\lambda$ nonzero and not a root of unity, or set $\lambda=1$.  For each place $v$ of $k = \Q(\lambda)$, the limits
	$$\lim_{n\to\infty} \frac{1}{2^{n-1}}  \log \| F_n^\pm (t_1, t_2) \|_v$$
converge locally uniformly on $(\C_v)^2 \setminus\{(0,0)\}$ to continuous functions $G^\pm_v$ satisfying
	$$G_v^\pm(t_1,t_2) = \begin{cases}
 				2H^\pm_{\lambda,v}(t_1/t_2)+\log |t_2|_v &\text{ if } t_2\neq0,\\
				\log|t_1|_v+ \gamma_v(\lambda) &\text{ if } t_2=0.
			\end{cases}$$
The function $G_v^\pm$ extends uniquely to define a continuous potential function for a probability measure $\mu_v^\pm$ on $\P^{1,an}_v$.
\end{theorem}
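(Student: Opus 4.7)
The plan is to transport the argument of Theorem \ref{convergence} to the non-archimedean setting, where it is simplified by the strong triangle inequality, and then to descend to $\P^{1,an}_v$ using the logarithmic homogeneity of the limit function. Convergence on the open set $\{t_2\neq 0\}$, together with the identity $G_v^\pm(t_1,t_2) = 2H_{\lambda,v}^\pm(t_1/t_2) + \log|t_2|_v$, is essentially standard $v$-adic dynamics: the escape-rate function $(t,z)\mapsto \lim_n 2^{-n}\log\|F_t^n(z)\|_v$ is jointly continuous in any holomorphic family of rational maps, a fact that extends to the Berkovich setting; see \cite[Ch.~10]{BRbook}. The real work is the analysis near $t_2=0$ and the passage to $\P^{1,an}_v$.

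The principal obstacle is verifying convergence of
\begin{equation*}
\gamma_v(\lambda) \;=\; \frac{1}{2}\sum_{i=1}^\infty \frac{1}{2^i}\log\bigl|1+\lambda+\cdots+\lambda^i\bigr|_v.
\end{equation*}
For $|\lambda|_v\neq 1$ the ultrametric inequality gives $|1+\cdots+\lambda^i|_v = \max(1,|\lambda|_v^i)$ for $i$ large, so the summand is $O(i)$; for $\lambda=1$ the summand is $\log|i+1|_v$, bounded below only by a logarithm of $i+1$. The delicate case is $|\lambda|_v=1$ with $\lambda$ not a root of unity, where the series can only diverge if $|\lambda^{i+1}-1|_v$ decays faster than any geometric rate. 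This is excluded by a non-archimedean Baker-type estimate on linear forms in $p$-adic logarithms, which yields a polynomial lower bound $|\lambda^n-1|_v \geq C\,n^{-A}$ with constants depending only on $\lambda$ and $v$; this is vastly more than needed against the weight $1/2^i$.

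With $\gamma_v(\lambda)$ finite, the polynomial identities (\ref{B coeff}) and (\ref{C coeff}) and the inductive formulas for $A_n(\lambda), D_n(\lambda)$ from the proof of Lemma \ref{coefficients estimate} are purely formal and hold over $\C_v$, giving $\lim_n|B_n(\lambda)|_v^{1/2^{n-1}} = \lim_n|C_n(\lambda)|_v^{1/2^{n-1}} = e^{\gamma_v(\lambda)}$ along with the same upper bounds for $|A_n(\lambda)|_v$ and $|D_n(\lambda)|_v$. A $v$-adic analog of Lemma \ref{H bound} is immediate from the explicit form (\ref{F_t}) of $F_t$ and is sharper than its archimedean counterpart when $v$ is non-archimedean, so the proofs of Lemmas \ref{upper bound by epsilon} and \ref{lower bound by epsilon} transport with essentially no change. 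Combining this with continuity away from $t_2=0$ produces the continuous limit $G_v^\pm$ on $(\C_v)^2\setminus\{(0,0)\}$ satisfying the piecewise formula in the statement.

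For the final assertion, observe that $G_v^\pm$ is logarithmically homogeneous, $G_v^\pm(c\cdot x) = G_v^\pm(x) + \log|c|_v$, so $G_v^\pm(t_1,t_2) - \log\|(t_1,t_2)\|_v$ descends to a continuous function on the Berkovich projective line $\P^{1,an}_v$. Each approximant $2^{1-n}\log\|F_n^\pm(t_1,t_2)\|_v$ is the logarithm of the norm of a homogeneous polynomial map of nonzero resultant, whose Berkovich Laplacian is a probability measure on $\P^{1,an}_v$. Uniform convergence of these potentials then produces a probability measure $\mu_v^\pm$ on $\P^{1,an}_v$ with continuous potential $G_v^\pm$, by the standard potential-theoretic formalism of \cite[Ch.~5]{BRbook}.
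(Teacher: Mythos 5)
Your overall route is the paper's: transport the archimedean estimates (the analogs of Lemmas \ref{H bound}, \ref{upper bound by epsilon}, \ref{lower bound by epsilon}) to $\C_v$, where the ultrametric inequality only sharpens them, and then descend to $\P^{1,an}_v$ via logarithmic homogeneity and the potential theory of \cite{BRbook}; that is exactly how the paper argues. The genuine difference is the key input, the finiteness of $\gamma_v(\lambda)$. You invoke a Baker-type lower bound for $|\lambda^n-1|_v$ from the theory of ($p$-adic) linear forms in logarithms, which does give $|\lambda^n-1|_v\geq C n^{-A}$ and is indeed far more than the weight $2^{-i}$ requires; but this is a deep transcendence-theoretic tool, and as you state it (``non-archimedean Baker'') it does not cover an \emph{archimedean} place $v$ of $\Q(\lambda)$ at which $|\lambda|_v=1$ (such places occur, e.g.\ for Salem-type $\lambda$), where you would need the classical archimedean Baker theorem instead -- note that Theorem \ref{convergence}, to which you defer for archimedean $v$, itself assumes the convergence of $\gamma_v(\lambda)$, so this case cannot be skipped. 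The paper's Lemma \ref{product formula convergence} avoids all of this with a purely elementary global argument: since $1-\lambda^{i}\neq 0$, the product formula for $k=\Q(\lambda)$ gives $|1-\lambda^{i}|_v^{N_v}=\prod_{w\neq v}|1-\lambda^{i}|_w^{-N_w}\geq c^{\,i}$, because only the finitely many places with $|\lambda|_w>1$ can make the product large and they do so at most geometrically; this treats every place (archimedean or not) uniformly and yields a geometric, not merely polynomial, lower bound. (In the non-archimedean case one could also get a polynomial bound elementarily by lifting-the-exponent, so Baker is avoidable there too.) So your argument is correct once you supply the archimedean Baker estimate at the unit-circle archimedean places, but the product-formula trick buys a one-paragraph, effective, and place-uniform proof, while your approach imports a much heavier theorem for a statement that only needs a sub-geometric bound.
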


Theorem \ref{non-archimedean convergence} is nearly identical to Theorem \ref{convergence}, except there is no longer a condition on the finiteness of $\gamma(\lambda)$, and the convergence holds at all places $v$.  This finiteness is guaranteed by the following lemma.

\begin{lemma} \label{product formula convergence}
For every algebraic number $\lambda$ which is not a root of unity, or for $\lambda=1$, the sum
	$$\gamma_v(\lambda) = \frac{1}{2} \sum_{i=1}^\infty \frac{1}{2^i} \log|1 + \lambda + \cdots + \lambda^i| _v$$
converges for all places $v$ of $k = \Q(\lambda)$.
\end{lemma}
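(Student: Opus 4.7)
Write $a_i = 1 + \lambda + \cdots + \lambda^i$ so that the sum to be controlled is $\tfrac{1}{2}\sum 2^{-i}\log|a_i|_v$. The strategy is to show that $|\log|a_i|_v|$ grows at most linearly in $i$, which will give absolute convergence by comparison with $\sum i/2^i$. The first step is to verify that $a_i\neq 0$ for all $i\geq 1$: if $\lambda=1$ then $a_i = i+1$, while if $\lambda\neq 1$ then $a_i = (\lambda^{i+1}-1)/(\lambda-1)$, which vanishes only when $\lambda$ is a root of unity. So the hypothesis on $\lambda$ precisely ensures that $a_i\in k^\ast$ for all $i$.

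For the upper bound, I would apply the (strong) triangle inequality directly. In the archimedean case $|a_i|_v \leq (i+1)\max(1,|\lambda|_v)^i$, and in the non-archimedean case $|a_i|_v \leq \max(1,|\lambda|_v)^i$; either way $\log|a_i|_v \leq C_1 i + C_2$ for constants depending on $\lambda$ and $v$.

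The main step — and the only real obstacle — is the lower bound. Here the worry is that at places where $|\lambda|_v = 1$, the quantity $|\lambda^{i+1}-1|_v$ could in principle become extremely small for particular values of $i$, threatening the convergence of the series. To rule this out, I would use the Weil (absolute logarithmic) height $h$ together with the product formula. Standard inequalities
$$h(\alpha\beta) \leq h(\alpha)+h(\beta),\qquad h(\alpha+\beta) \leq h(\alpha)+h(\beta)+\log 2,\qquad h(\lambda^n)=n\,h(\lambda)$$
give $h(a_i) \leq (i+2)\,h(\lambda) + 2\log 2 = O(i)$. Since $a_i\in k^\ast$, the product formula (Liouville inequality) yields
$$\log|a_i|_v \;\geq\; -\frac{[k:\Q]}{N_v}\,h(a_i) \;\geq\; -C_3\, i$$
for a constant depending only on $\lambda$ and $v$. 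This is the crucial step: it replaces a potentially delicate Diophantine question about $|\lambda^{i+1}-1|_v$ by a uniform linear bound, at the modest cost of passing through the global height.

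Combining the two bounds gives $|\log|a_i|_v| \leq C\, i$ uniformly in $i$, so the series $\sum 2^{-i}\log|a_i|_v$ converges absolutely, proving the lemma. I expect the upper bound to be essentially immediate, while the lower bound is the conceptual heart of the argument: it is where the hypothesis that $\lambda$ is not a root of unity (ensuring $a_i\neq 0$ and hence the applicability of the product formula to $a_i$) is genuinely used.
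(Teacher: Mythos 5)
Your proof is correct, and at its core it runs on the same engine as the paper's: the only danger is at places $v$ with $|\lambda|_v=1$, where $|a_i|_v$ could a priori be very small, and in both arguments the product formula is what rules this out by forcing a lower bound on $\log|a_i|_v$ that is only linear in $i$. The packaging differs. The paper multiplies by $(1-\lambda)$ to reduce to $|1-\lambda^{i+1}|_v$, disposes of the places with $|\lambda|_v\neq 1$ by direct estimates, and then at places with $|\lambda|_v=1$ runs the product-formula argument by hand, bounding $\prod_{w\neq v}|1-\lambda^i|_w^{N_w}$ using the finitely many places where $|\lambda|_w>1$. You instead invoke the standard height formalism: $h(a_i)=O(i)$ together with the Liouville-type inequality $\log|a_i|_v\geq -\frac{[k:\Q]}{N_v}h(a_i)$, which is exactly the paper's product-formula manipulation stated once and for all, and which treats all places (archimedean or not, $|\lambda|_v=1$ or not) uniformly, with the nonvanishing of $a_i$ (i.e.\ $\lambda$ not a root of unity, or $\lambda=1$) as the hypothesis that legitimizes it. Your route is therefore a cleaner, more uniform repackaging; the paper's hands-on version has the minor advantage of exhibiting the local behavior explicitly (e.g.\ $\gamma_v(\lambda)=0$ when $|\lambda|_v<1$ at non-archimedean $v$), which it reuses elsewhere. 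One small precision: the stated bound $h(a_i)\leq (i+2)h(\lambda)+2\log 2$ comes from the closed form $a_i=(\lambda^{i+1}-1)/(\lambda-1)$ (for $\lambda\neq 1$), not from applying $h(\alpha+\beta)\leq h(\alpha)+h(\beta)+\log 2$ term by term, which would only give a quadratic bound in $i$ --- though even that would suffice for convergence against $2^{-i}$.
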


\proof The statement follows from the product formula for the number field $k$.  First assume that $\lambda\not=1$.  Note that $(1+ \lambda + \cdots + \lambda^i)(1-\lambda) = 1-\lambda^{i+1}$, so it suffices to prove the convergence of the sum
	$$\sum_{i=0}^{\infty} \frac{1}{2^i} \log|1 -\lambda^i|_v$$
at all places $v$.

Let $v$ be a non-archimedean place of $k$.  If $|\lambda|_v < 1$, then $|1-\lambda^i|_v = 1$ for all $i$, and $\gamma_v$ is 0.  If $|\lambda|_v > 1$, then $|1-\lambda^i|_v = |\lambda|_v^i$ for all $i$, and again the sum converges.  Similarly for archimedean places, as long as $|\lambda|_v \not=1$, it is easy to see that the sum defining $\gamma_v$ converges.

For any $\lambda\in\Qbar$, there are only finitely many places of $k=\Q(\lambda)$ for which $|\lambda|_v > 1$.   For all such $v$, we have $|1-\lambda^i|_v$ growing as $|\lambda|_v^i$ as $i\to \infty$.  For all other $v$, the absolute value $|1-\lambda^i|_v$ is uniformly bounded above (by 1 if non-archimedean, and by 2 if archimedean).  Let $\ell$ be the number of archimedean places for which $|\lambda|_v \leq 1$.

Suppose $v$ is a place such that $|\lambda|_v = 1$.  It remains to show that $|1-\lambda^i|_v$ cannot get too small as $i\to\infty$.  As $1-\lambda^i \not=0$ for all $i$, the product formula for $k$ states that
	$$\prod_{w\in \cM_k} |1-\lambda^i|_w^{N_w} = 1.$$
Therefore,
	$$|1 - \lambda^i|^{N_v}_v = \frac{1}{ \prod_{w\not=v \in \cM_k} |1-\lambda^i|_w^{N_w} } \geq  \frac{1}{2^\ell \, \prod_{w: |\lambda|_w > 1} |1-\lambda^i|^{N_w}_w } \geq c^i $$
for some constant $c>0$ and all $i$.  It follows that the expression for $\gamma_v$ converges at this place $v$.

Finally, assume $\lambda = 1$.  Then the sum becomes
	$$\gamma_v(1) = \sum_{j=2}^\infty \frac{1}{2^j} \log|j|_v.$$
The expression clearly converges at the unique archimedean place $v=\infty$.  Setting $v = p$ for any prime $p$, we have $|j|_p \geq 1/j$ for all $j\in\N$; therefore, the sum is easily seen to converge also in this case.
\qed

\begin{remark}
For a given $\lambda$, the value $\gamma_v(\lambda)$ may be nonzero at infinitely many places $v$.  For example, $\gamma_v(1)$ is nonzero at {\em all} places $v$ of $\Q$.  The conclusion of Lemma \ref{product formula convergence} also appears in \cite[Lemma 4]{Herman: Yoccoz}. 
\end{remark}

\subsection{Proof of Theorem \ref{non-archimedean convergence}.}
Fix $\lambda\in\Qbar$, with $\lambda$ nonzero and not a root of unity; or let $\lambda=1$.  If $v$ is an archimedean place of the number field $k = \Q(\lambda)$, then the Theorem follows immediately from Theorem \ref{convergence} and Lemma \ref{product formula convergence} (for the finiteness of $\gamma_v(\lambda)$).

Now suppose that $v$ is a non-archimedean place of $k$.  The proof of convergence in the archimedean case shows {\em mutatis mutandis} that the convergence to $G^\pm_v$ is locally uniform for all places $v$.  A line-by-line analysis of the proof of Theorem \ref{convergence} shows that the proof uses nothing more than the triangle inequality and elementary algebra.  As such, the estimates can only be improved when the usual triangle inequality is replaced by the ultrametric inequality in the case of a non-archimedean absolute value.

The extension of $G_v^\pm$ to Berkovich space and the construction of the measure $\mu_v^\pm$ as its Laplacian are carried out exactly as in \cite[\S10.1]{BRbook}.
\qed

\medskip
There is one lemma (Lemma \ref{H bound}) used in the proof of Theorem \ref{convergence} that we will need again in the next section, in the non-archimedean setting.  We state it explicitly here.

\begin{lemma}  \label{constant c}
For each $\lambda\in \Qbar\setminus\{0\}$, there is constant $c>0$ so that
	 $$ \frac {\| F_t (z_1,z_2) \|_v}{\|(z_1,z_2)\|_v^2} \geq c |t|_v^{-1} $$
for all $|t|_v\geq 1$, all $(z_1, z_2)\neq (0,0)$ in $(\C_v)^2$, and all places $v$ of $k = \Q(\lambda)$.
\end{lemma}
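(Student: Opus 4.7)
My plan is to run the non-archimedean analog of the proof of Lemma \ref{H bound}, letting the ultrametric inequality do most of the work, and then to assemble the resulting local constants into a single one using the fact that $|\lambda|_v = 1$ at all but finitely many places of $k = \Q(\lambda)$. By the homogeneity of $F_t$ and its symmetry in $z_1, z_2$, I may scale and permute coordinates so that $(z_1, z_2) = (z, 1)$ with $|z|_v \leq 1$ and $\|(z_1, z_2)\|_v = 1$; the task reduces to showing
$$\max\{\, |\lambda z|_v,\ |z^2 + tz + 1|_v \,\} \,\geq\, c/|t|_v.$$

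At each archimedean place $v$, the argument already recorded in the proof of Lemma \ref{H bound} applies verbatim, yielding the bound with local constant $c_v = \min\{|\lambda|_v/2,\, 1/4\}$. At a non-archimedean place $v$ I would distinguish two cases according to the size of $|z|_v$. If $|z|_v > 1/|t|_v$, then already $|\lambda z|_v > |\lambda|_v/|t|_v$. Otherwise $|z|_v \leq 1/|t|_v \leq 1$ and $|tz|_v \leq 1$. If $|tz|_v < 1$, then the hypothesis $|t|_v \geq 1$ combined with $|z|_v \leq 1/|t|_v$ forces $|z^2|_v < 1$ as well, so all terms of $z^2 + tz + 1$ except the constant have absolute value strictly less than one, and the ultrametric inequality yields $|z^2 + tz + 1|_v = 1$. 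Finally, if $|tz|_v = 1$, then $|z|_v = 1/|t|_v$ and $|\lambda z|_v = |\lambda|_v/|t|_v$. In every case $\|F_t(z,1)\|_v \geq \min\{1,\,|\lambda|_v\}/|t|_v$, so one may take $c_v = \min\{1,\,|\lambda|_v\}$ at each non-archimedean place.

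To conclude, I set $c = \min_v c_v$ with $v$ ranging over all places of $k$. Since $\lambda \in \Qbar \setminus \{0\}$, we have $|\lambda|_v = 1$ for all but finitely many $v$, so $c_v = 1$ outside a finite set of places; the infimum is therefore a minimum of finitely many strictly positive numbers and hence $c > 0$. The only subtle point in the whole argument is this final step of producing a constant uniform across places, and it is handled by the standard finiteness property of absolute values of a nonzero algebraic number. The non-archimedean inequality is otherwise easier than its archimedean counterpart, since the ultrametric inequality makes the dominant-term argument exact rather than approximate, and no delicate cancellation needs to be tracked.
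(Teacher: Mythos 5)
Your proposal is correct and follows essentially the same route as the paper: reduce by homogeneity and the symmetry $z_1\leftrightarrow z_2$ to the case $(z,1)$ with $|z|_v\le 1$, reuse the archimedean argument of Lemma \ref{H bound} verbatim, use ultrametric domination of the constant term at non-archimedean places, and obtain a single positive constant from the fact that $|\lambda|_v\ne 1$ at only finitely many places. The only deviation is cosmetic: your non-archimedean case split (threshold $1/|t|_v$ and the dichotomy $|tz|_v<1$ versus $|tz|_v=1$) replaces the paper's dichotomy ``either $|s\lambda z_1|_v\ge c|s|_v^2$ or $|z_1|_v<|s|_v/2$,'' with the same effect.
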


\proof
Set
	$$c = \min\{\min\{|\lambda|_v/2: v \in \cM_k\}, 1/4\}.$$
Note that $c > 0$ since there are only finitely many places $v$ for which $|\lambda|_v<1$.  Now fix $v\in\cM_k$.  If $v$ is archimedean, then the proof is identical to that of Lemma \ref{H bound}.  If $v$ is non-archimedean, the estimate can be simplified a bit.  Indeed, assume that $z_2=1$ and $|z_1|_v\leq 1$.  Then $\|(z_1,z_2)\|_v = 1$, and we estimate the norm of $sF_{1/s}(z_1,1) = (s\lambda z_1, sz_1^2 + s + z_1)$ with $0 < |s|_v \leq 1$.
For each such $s$, either $|s\lambda z_1|_v\geq c|s|_v^2$, or $|z_1|_v < |s|_v/2$ in which case,
   $$|sz_1^2+s+z_1|_v = |s|_v >  c |s|_v^2.  $$
In either case, $\|F_{1/s}(z_1,z_2)\| \geq c|s|$ and the lower bound is proved.  The case of $|z_2|_v < |z_1|_v=1$ follows by symmetry, and the conclusion of the lemma is obtained from the homogeneity of $F_t$.
\qed

\bigskip
\section{The homogeneous bifurcation sets}
\label{sets}

In this section, we study the escape-rate functions $G_v^\pm$ of Theorems \ref{convergence} and \ref{non-archimedean convergence} and compute the homogeneous capacity of the sets
	$$K^\pm_{\lambda,v} =  \{z\in (\C_v)^2:  G_v^\pm(z)\leq 0\}.$$
We also provide a bound on the diameter of the sets $K^\pm_{\lambda, v}$ that will be used in our proof of Theorem \ref{equidistribution at all places} (and Theorem \ref{equidistribution}).

\subsection{The homogeneous capacity}
We will consider compact sets $K\subset \C^2$ that are circled and pseudoconvex:  these are sets of the form
	$$K = \{(z,w)\in \mathbb{C}^2: G_K(z,w) \leq 0\}$$
for continuous, plurisubharmonic functions $G_K :  \C^2\setminus\{(0,0)\}  \to \R$ such that
	$$G_K(\alpha z, \alpha w) = G_K(z, w) + \log|\alpha|$$
for all $\alpha\in\C^*$; see \cite[\S3]{D:lyap}.  Such functions are (homogeneous) potential functions for probability measures on $\P^1$ \cite[Theorem 5.9]{Fornaess:Sibony}.

Set $G_K^+=\max\{G_K,0\}$.  The Levi measure of $K$ is defined by
	$$\mu_K=dd^cG_K^+\wedge dd^cG_K^+.$$
It is known that $\mu_K$ is a probability measure supported on $\partial K=\{G=0\}$. The homogeneous capacity of $K$ is defined by
$$\capacity(K)=\exp\Big(\iint\log|\zeta\wedge \xi| d\mu_K(\zeta)d\mu_K(\xi)\Big).$$
This capacity was introduced in \cite{D:lyap} and shown in \cite{Baker:Rumely:equidistribution} to satisfy $\capacity(K) = (d_\infty(K))^2$, where $d_\infty$ is the transfinite diameter in $\C^2$.

To compute the capacity, suppose that
	$$F_n : \C^2 \to \C^2$$
is a sequence of homogeneous polynomial maps such that the resultant $\Res(F_n) \not=0$ for all $n$ and that
	$$\lim_{n\to\infty} \frac{1}{\deg(F_n)} \log\|F_n\|$$
converges locally uniformly in $\C^2 \setminus \{(0,0)\}$ to the function $G_K$.  The capacity $\capacity(K)$ may be computed as
\begin{equation}  \label{archimedean cap}
 	\capacity(K)=\lim_{n\rightarrow\infty} |{\Res}(F_n)|^{-1/\deg(F_n)^2}.
\end{equation}
If, in addition, the maps $F_n$ are defined over a number field $k$, and if the convergence holds for all absolute values $v$ in $\cM_k$, with limiting function $G_v: (\C_v)^2\setminus \{(0,0)\} \to \R$, then the same computation works at all places $v$.  That is,
\begin{equation} \label{cap at all places}
 	\capacity(K_v)=\lim_{n\rightarrow\infty} |{\Res}(F_n)|_v^{-1/\deg(F_n)^2}.
\end{equation}
See \cite{DWY:Lattes} for a proof; similar statements appear in \cite{DR:transfinite}.

\subsection{Homogeneous capacity of the bifurcation sets.}
Recall the definition of $\gamma(\lambda)$ from Theorem \ref{convergence}.

\begin{theorem}\label{cap}
For all $\lambda\in\C\setminus\{0\}$ such that $|\gamma(\lambda)|<\infty$, the set
	$$K^\pm_\lambda = \{z\in \C^2:  G_\lambda^\pm(z)\leq 0\}$$
is compact, circled, and pseudoconvex; its homogeneous capacity is
$$\capacity(K^+_{\lambda})=\capacity(K^-_{\lambda})=\frac{1}{|\lambda|^2}\prod_{j=1}^{+\infty}|1+\lambda+\cdots+\lambda^{j}|^{-3\cdot 4^{-j-1}}.$$
For all $\lambda\in\Qbar\setminus\{0\}$, not a root of unity, or for $\lambda=1$, we have
$$\capacity(K^+_{\lambda,v})=\capacity(K^-_{\lambda,v})=\frac{1}{|\lambda|_v^2}\prod_{j=1}^{+\infty}|1+\lambda+\cdots+\lambda^{j}|_v^{-3\cdot 4^{-j-1}}$$
for all places $v$ of the number field $k = \Q(\lambda)$.
\end{theorem}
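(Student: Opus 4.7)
The plan is to reduce Theorem~\ref{cap} to a direct application of the capacity formulas~\eqref{archimedean cap} and~\eqref{cap at all places} to the sequence $F_n^+$ from Section~\ref{homogeneous}. By the symmetry $H_\lambda^-(t) = H_\lambda^+(-t)$, we have $G^-_\lambda(t_1,t_2) = G^+_\lambda(-t_1,t_2)$, so the involution $(t_1,t_2) \mapsto (-t_1,t_2)$ sends $K^+_\lambda$ to $K^-_\lambda$ and preserves homogeneous capacity; it therefore suffices to treat $K^+_\lambda$ (and analogously $K^+_{\lambda,v}$ at each place). The properties ``compact, circled, pseudoconvex'' of $K^+_\lambda$ are immediate from Theorem~\ref{convergence}: $G^+_\lambda$ is continuous on $\C^2\setminus\{(0,0)\}$, is plurisubharmonic as a locally uniform limit of such, and satisfies $G^+_\lambda(\alpha\zeta) = G^+_\lambda(\zeta) + \log|\alpha|$.

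The core of the argument is an inductive computation of $\Res(F_n^+)$. Writing $F_n^+ = (P_n, Q_n)$, the relation $F_t(0, z_2) = (0, z_2^2)$ propagates through the iteration to give $t_2 \mid P_n$; set $P_n = t_2 R_n$, so $\deg R_n = 2^{n-1}-1$ and $\deg Q_n = 2^{n-1}$. Unpacking $F_{n+1}^+(t_1,t_2) = t_2^{2^n} F_{t_1/t_2}(F_n^+(t_1,t_2)/t_2^{2^{n-1}})$ yields the polynomial recursion
\begin{equation*}
	R_{n+1} = \lambda R_n Q_n, \qquad Q_{n+1} = t_2^2 R_n^2 + t_1 R_n Q_n + Q_n^2,
\end{equation*}
with $R_1 = \lambda$ and $Q_1 = t_1 + 2t_2$. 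Using multiplicativity of the resultant together with the congruences $Q_{n+1} \equiv Q_n^2 \pmod{R_n}$ and $Q_{n+1} \equiv t_2^2 R_n^2 \pmod{Q_n}$, I would derive, up to signs that vanish on taking absolute values,
\begin{equation*}
	|\Res(F_n^+)| = |q_n|\cdot |\Res(R_n, Q_n)|, \qquad |\Res(R_{n+1}, Q_{n+1})| = |\lambda|^{2^n}\, |q_n|^2\, |\Res(R_n, Q_n)|^4,
\end{equation*}
where $q_n := Q_n(1,0)$. This also shows each resultant is nonzero: the hypothesis $|\gamma(\lambda)|<\infty$ (resp.\ $\lambda$ not a root of unity) forces $1 + \lambda + \cdots + \lambda^i \neq 0$ for all $i$, hence $q_n \neq 0$.

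Iterating this recursion and normalizing by $\deg(F_n^+)^2 = 4^{n-1}$, the $|\lambda|^{2^n}$ terms telescope into a geometric sum while the $|q_n|^2$ terms contribute a second series:
\begin{equation*}
	-\log \capacity(K^+_\lambda) = \lim_{n\to\infty} \frac{\log|\Res(F_n^+)|}{4^{n-1}} = 2\log|\lambda| + 2\sum_{k=1}^{\infty} \frac{\log|q_k|}{4^k}.
\end{equation*}
Next I would substitute the explicit formulas $q_n = C_n(\lambda) = B_n(\lambda)(1+\lambda+\cdots+\lambda^{n-1})/\lambda^n$ from~\eqref{B coeff}--\eqref{C coeff} and reorganize the resulting double sum by collecting, for each fixed $j\geq 1$, all occurrences of $\log|1+\lambda+\cdots+\lambda^j|$. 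The $\lambda^n$ in the denominator of $q_n$ cancels against the $\lambda^n$ factor in $B_n$; the ``bulk'' contributions $2^{n-2-j}/4^n$ from the factors in $B_n$ sum in $n$ to $1/(2\cdot 4^{j+1})$, while the ``boundary'' terms $\log|1+\lambda+\cdots+\lambda^{n-1}|/4^n$ supply the missing amount to bring the total weight of $\log|1+\lambda+\cdots+\lambda^j|$ to exactly $3\cdot 4^{-j-1}$, yielding the claimed product formula.

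For the non-archimedean statement, every step of the above resultant computation is purely algebraic in $\lambda$ and transports verbatim to any place $v$ of $k = \Q(\lambda)$, giving the same identity with $|\cdot|$ replaced by $|\cdot|_v$. Theorem~\ref{non-archimedean convergence} supplies the locally uniform convergence of $2^{1-n}\log\|F_n^+\|_v$ to $G_v^+$ required by~\eqref{cap at all places}, and Lemma~\ref{product formula convergence} guarantees that the $v$-adic series defining the product converges at every place. The principal obstacle in the whole argument is the combinatorial reorganization of the last step: one has to verify that, after substituting the doubly-indexed products for $B_n$ into $\sum_n 4^{-n} \log|q_n|$ and regrouping, each term $\log|1+\lambda+\cdots+\lambda^j|$ picks up total weight exactly $3\cdot 4^{-j-1}$, with the boundary terms conspiring to patch the low-$j$ part of the sum.
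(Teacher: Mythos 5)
Your proposal is correct, and its overall strategy is the same as the paper's: both reduce to $K^+_\lambda$ by the symmetry $t\mapsto -t$, get compactness/circledness/pseudoconvexity from Theorem~\ref{convergence} (resp.\ Theorem~\ref{non-archimedean convergence}), and then compute the homogeneous capacity from the limit formulas~\eqref{archimedean cap} and~\eqref{cap at all places} via a recursion for $|\Res(F_n^+)|$. Where you differ is in how that recursion is obtained and exploited. The paper compares the Sylvester matrices of $F_{n+1}^+$ and of $A_\lambda\circ F_n^+$ with $A_\lambda(t_1,t_2)=(\lambda t_1t_2,t_1^2+t_2^2)$, invoking the composition formula $\Res(A_\lambda\circ F_n)=\Res(A_\lambda)^{\deg F_n}\Res(F_n)^{2\deg A_\lambda}$ from \cite[Proposition 6.1]{D:lyap}, and then closes the induction to the exact formula~\eqref{resultant formula} before passing to the limit. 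You instead factor $P_n=t_2R_n$, derive the polynomial recursion $R_{n+1}=\lambda R_nQ_n$, $Q_{n+1}=t_2^2R_n^2+t_1R_nQ_n+Q_n^2$, and use only elementary resultant identities (multiplicativity and invariance under adding a multiple of the other form) to get $|\Res(R_{n+1},Q_{n+1})|=|\lambda|^{2^n}|q_n|^2|\Res(R_n,Q_n)|^4$ with $q_n=Q_n(1,0)=C_n(\lambda)$; this reproduces exactly the paper's recursion $|\Res(F_{n+1})|=\frac{|C_{n+1}|}{|C_n|^2}|\lambda|^{2^n}|\Res(F_n)|^4$, and your final series rearrangement (bulk weight $4^{-j-1}$ plus boundary weight $2\cdot4^{-j-1}$, totalling $3\cdot4^{-j-1}$) agrees with the limit of the paper's closed form. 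What your route buys is independence from the composition formula and the matrix manipulation, at the cost of having to justify the regrouping of the double sum; that is harmless here, since convergence of $\gamma(\lambda)$ forces $2^{-j}\log|1+\lambda+\cdots+\lambda^j|\to 0$, hence absolute convergence of $\sum_j 4^{-j}\bigl|\log|1+\lambda+\cdots+\lambda^j|\bigr|$, and it also gives nonvanishing of all the resultants once $1+\lambda+\cdots+\lambda^i\neq 0$, as you note. You should state the base case $|\Res(R_1,Q_1)|=|\lambda|$ explicitly to anchor the induction, but this is a trivial addition.
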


\proof
Fix $\lambda\in\C\setminus\{0\}$ so that $|\gamma(\lambda)| < \infty$.  We will provide the proof for $K^+_\lambda$; the result for $K^-_\lambda$ follows by symmetry.  Recall the definition of $F_n^+$ from (\ref{F_n plus}).  That $K^+_\lambda$ is compact, circled and pseudoconvex follows immediately from the definition and continuity of $G^+_\lambda$, stated in Theorem \ref{convergence}.

To compute the capacity of $K^+_\lambda$, we give a recursive relation between $\Res(F_n)$ and $\Res(F_{n+1})$.  Consider the transformation $A_\lambda(t_1,t_2)=(\lambda t_1 t_2, t_1^2+t_2^2)$. Then ${\rm Res}(A_\lambda \circ F_n)$  takes the form
 $$\left|
\begin{array} {cccccccc}
0 & a_1& \cdots & a_{d-1} & a_d &0 &\cdots & 0\\
 0 & 0 & a_1& \cdots & a_{d-1} & a_d & \cdots &0\\
  & & & \cdots & & &��& \\
   0 & 0& \cdots & 0 & a_1& \cdots&  a_{d-1} & a_d\\
  b_0 & b_1&\cdots & b_{d-1} & b_d &0 &\cdots & 0\\
 0 & b_0 & b_1& \cdots & b_{d-1} & b_d & \cdots &0\\
  & & & \cdots & & &��& \\
   0 & 0& \cdots & b_0 & b_1&\cdots&  b_{d-1} & b_d
\end{array}
\right| $$
and $\Res(F_{n+1})$  takes the form
 $$\left|
\begin{array} {cccccccc}
0 & a_1& \cdots & a_{d-1} & a_d &0 &\cdots & 0\\
 0 & 0 & a_1& \cdots & a_{d-1} & a_d & \cdots &0\\
  & & & \cdots & & &��& \\
   0 & 0& \cdots & 0 & a_1& \cdots&  a_{d-1} & a_d\\
  b_0+\frac{a_1}{\lambda}& b_1+\frac{a_2}{\lambda}&\cdots & b_{d-1}+\frac{a_d}{\lambda} & b_d &0 &\cdots & 0\\
 0 & b_0+\frac{a_1}{\lambda} & b_1+\frac{a_2}{\lambda}& \cdots & b_{d-1}+\frac{a_d}{\lambda} & b_d & \cdots &0\\
  & & & \cdots & & &��& \\
   0 & 0& \cdots & b_0+\frac{a_1}{\lambda} & b_1+\frac{a_2}{\lambda}&\cdots&  b_{d-1}+\frac{a_d}{\lambda}& b_d
\end{array}
\right|,$$
where $b_0=C_n^2(\lambda)$ and $b_0+a_1/\lambda=C_{n+1}(\lambda)$, the coefficients defined in (\ref{coefficients}).  Therefore the resultants $\Res(F_n)$ satisfy:
\begin{eqnarray*}
 \Res(F_{n+1})&=&\frac{C_{n+1}(\lambda)}{C_n^2(\lambda)}\Res(A_\lambda \circ F_n)\\
&=&\frac{C_{n+1}(\lambda)}{C_n^2(\lambda)}\Res(A_\lambda)^{\deg(F_n)}\Res(F_n)^{2\deg(A_\lambda)}  \\
&=&\frac{C_{n+1}(\lambda)}{C_n^2(\lambda)}\Res(A_\lambda)^{2^{n-1}}\Res(F_{n})^{4}
\end{eqnarray*}
where the second equality follows from the decomposition property of resultants (see \cite[Proposition 6.1]{D:lyap}).

We may compute from (\ref{C coeff}) that $C_1(\lambda)=1,\ C_2(\lambda)=1+\lambda$, while for $n\geq3$,
\begin{equation} \label{C formula}
C_n(\lambda)=(1+\lambda)^{2^{n-3}}(1+\lambda+\lambda^2)^{2^{n-4}}\cdots (1+\lambda+\cdots+\lambda^{n-2})^{2^0}(1+\lambda+\cdots+\lambda^{n-1}).
\end{equation}
From the definition of resultant, we have $\Res(A_\lambda) =\lambda^2$ and $\Res(F_1) =-\lambda$. Thus the recursive relation becomes
$$\Res(F_{n+1})=\frac{1+\lambda+\cdots+\lambda^n}{1+\lambda+\cdots+\lambda^{n-1}} \; \lambda^{2^n}\Res(F_n)^{4}.$$
By induction, $\Res(F_2)=\lambda^6(1+\lambda)$, and for $n\geq3$, we have
\begin{equation} \label{resultant formula}
\Res(F_n)=\lambda^{2\cdot 4^{n-1}-2^{n-1}}(1+\lambda+\cdots+\lambda^{n-1})\prod_{j=1}^{n-2}(1+\lambda+\cdots+\lambda^{j})^{3\cdot 4^{n-2-j}}.
\end{equation}
From equation (\ref{archimedean cap}), we conclude that
\begin{eqnarray*}
\capacity(K^+_{\lambda})&=&\lim_{n\rightarrow\infty} |\Res(F_{n})|^{-1/\deg(F_{n})^2}\\
&=&\lim_{n\rightarrow\infty} \left(\frac{|\lambda|^{-2+2^{1-n}}}{|1+\lambda+\cdots+\lambda^{n-1}|^{4^{1-n}}}\prod_{j=1}^{n-2}|1+\lambda+\cdots+\lambda^{j}|^{-3\cdot 4^{-j-1}}\right)\\
&=&\frac{1}{|\lambda|^2}\prod_{j=1}^{+\infty}|1+\lambda+\cdots+\lambda^{j}|^{-3\cdot 4^{-j-1}}.
\end{eqnarray*}
The proof for $\lambda\in\Qbar$ and non-archimedean absolute values is identical, using (\ref{cap at all places}).
\qed

\subsection{Bounds for the homogeneous sets.}  \label{subsection bounds}
Fix $\lambda\in\Qbar$, not a root of unity (except possibly 1) and nonzero.  In our proof of Theorem \ref{equidistribution}, we will need control over the diameter of $K^\pm_{\lambda,v}$ at most places $v$ of the number field $k = \Q(\lambda)$.  We define subsets of the set of places $\cM_k$:
\begin{itemize}
\item for each $n\geq 1$, let $\cM_{k, n}$ be the set of all non-archimedean places $v$ for which $|\lambda|_v=1$, $|1 + \lambda + \cdots + \lambda^i|_v = 1$ for all $i< n$, and  $|1 + \lambda + \cdots + \lambda^n|_v<1$; and
\item  let $\cM_{k, 0}$ be the set of all non-archimedean places $v$ for which $|\lambda|_v = 1$ and $|1 + \lambda + \cdots + \lambda^i|_v=1$ for all $i$.
\end{itemize}
Note that the set $\cM_k \setminus  \bigcup_{n\geq 0} \cM_{k,n}$ is finite.  (Indeed, for non-archimedean $v$, there is an integer $i>0$ with $|1+\lambda+\cdots +\lambda^i|_{{v}}>1$ if and only if $|\lambda|_v>1$.  There are only finitely many such non-archimedean places. And there are only finitely many archimedean places.)  Also, the set $\cM_{k,0}$ might be empty, as will be the case for $\lambda=1$.

\begin{lemma} \label{trivial K}
For all $v$ in $\cM_{k,0}$, the sets $K_v^+$ and $K_v^-$ are trivial; that is,
	$$G_v^\pm(t_1,t_2) = \log\|(t_1, t_2)\|_v$$
and $K_v^\pm = \bar{D}^2(0,1)$.
\end{lemma}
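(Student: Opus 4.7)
The plan is to show directly that
$$\|F_n^+(t_1,t_2)\|_v \;=\; \|(t_1,t_2)\|_v^{2^{n-1}}$$
for all $n\geq 1$ and all nonzero $(t_1,t_2)\in (\C_v)^2$; taking logarithms, dividing by $2^{n-1}$, and passing to the limit using Theorem \ref{non-archimedean convergence} then yields $G_v^\pm(t_1,t_2)=\log\|(t_1,t_2)\|_v$, whence $K_v^\pm=\bar{D}^2(0,1)$. The argument for $F_n^-$ is identical, so I will focus on $F_n^+$. By homogeneity, both sides of the claim are of degree $2^{n-1}$, so it suffices to verify equality when $\|(t_1,t_2)\|_v=1$. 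Since $F_n^+$ is a homogeneous polynomial in $(t_1,t_2)$ with coefficients in $\Z[\lambda]$ and $|\lambda|_v=1$ at every $v\in\cM_{k,0}$, all coefficients are $v$-adic integers, giving the upper bound $\|F_n^+(t_1,t_2)\|_v\leq 1$ for free.

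For the lower bound I would split into two cases according to the reduction of $(t_1,t_2)$ modulo the maximal ideal of $\O_v$. First, when $|t_2|_v=1$, the ratio $t_1/t_2$ lies in $\O_v$, and the map $F_{t_1/t_2}(z_1,z_2)=(\lambda z_1 z_2,\, z_1^2+(t_1/t_2)z_1z_2+z_2^2)$ has good reduction at $v$: the reduction has degree $2$ (since $\bar\lambda\neq 0$), and inspecting its two coordinates shows that $\bar F_{t_1/t_2}(\bar z_1,\bar z_2)=(0,0)$ forces $\bar z_1\bar z_2=0$, which then forces $\bar z_1=\bar z_2=0$. By the standard fact that maps of good reduction preserve the sup-norm (see \cite[Chapter 10]{BRbook}), $\|F_{t_1/t_2}^n(1,1)\|_v=1$, so $\|F_n^+(t_1,t_2)\|_v=|t_2|_v^{2^{n-1}}=1$.

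Second, when $|t_2|_v<1\leq|t_1|_v=1$, I would analyze the leading $t_1$-coefficient of the second component. Writing $F_t^n(1,1)=(P_n(t),Q_n(t))$, the recursion $F_t^{n+1}(1,1)=F_t(P_n,Q_n)$ gives $Q_{n+1}=P_n^2+tP_nQ_n+Q_n^2$, from which a straightforward induction shows that the leading coefficient $c_n$ of $Q_n$ is a product of factors of the form $1+\lambda+\cdots+\lambda^j$ with $j\leq n-1$. By the defining property of $\cM_{k,0}$, each such factor is a $v$-adic unit, so $|c_n|_v=1$. The second component of $F_n^+(t_1,t_2)$ then equals $c_n t_1^{2^{n-1}}$ plus terms all divisible by $t_2$ and hence of $v$-adic norm strictly less than $1$, and the ultrametric inequality yields $\|F_n^+(t_1,t_2)\|_v\geq 1$.

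The principal obstacle is the second case: it is where the full strength of the condition defining $\cM_{k,0}$ is used, ensuring that the leading coefficient of $Q_n$ remains a $v$-adic unit for every $n$. The first case is essentially the principle that maps of good reduction preserve the sup norm, and verifying good reduction here only requires $|\lambda|_v=1$, which is automatic throughout $\cM_{k,0}$.
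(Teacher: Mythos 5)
Your argument is correct, but it takes a different route from the paper's. The paper's proof is a single stroke in parameter space: the coefficients of $F_n^\pm$ are $v$-integral and, by the explicit resultant formula (\ref{resultant formula}) (a product of powers of $\lambda$ and of $1+\lambda+\cdots+\lambda^j$, hence a $v$-unit for $v\in\cM_{k,0}$), one has $|\Res(F_n^\pm)|_v=1$; applying \cite[Lemma 10.1]{BRbook} to the homogeneous maps $F_n^\pm$ themselves then gives $\|F_n^\pm(t_1,t_2)\|_v=\|(t_1,t_2)\|_v^{2^{n-1}}$ at once. You instead re-prove this norm identity by hand, splitting according to the reduction of $(t_1,t_2)$: for $|t_2|_v=1$ you invoke good reduction of the dynamical map $F_{t_1/t_2}$ (its resultant is $\lambda^2$, a $v$-unit, so the sup-norm is preserved along the orbit of $(1,1)$), and for $|t_2|_v<1=|t_1|_v$ you isolate the top $t_1$-coefficient of the second coordinate, which is exactly $C_n(\lambda)$ of (\ref{C formula}), a product of $v$-units for $v\in\cM_{k,0}$, and conclude by the ultrametric inequality. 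The two proofs rest on the same arithmetic input --- that $\lambda$ and all $1+\lambda+\cdots+\lambda^j$ are $v$-units --- but yours trades the citation of the resultant computation from the proof of Theorem \ref{cap} for a two-case analysis that amounts to proving the needed special case of \cite[Lemma 10.1]{BRbook} directly; the paper's version is shorter, while yours is more self-contained and makes transparent where the defining condition of $\cM_{k,0}$ enters (namely through $C_n(\lambda)$ in the regime $|t_2|_v<1$).
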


\proof
For each $n\geq 1$, the coefficients of $F_n$ will have absolute value $\leq 1$.  From the formula for the resultant of $F_n$ in (\ref{resultant formula}), we see that $|\Res(F_n)|_v = 1$ for all $n$.  Applying \cite[Lemma 10.1]{BRbook} to $F_n$, setting $B_1=B_2=1$, we conclude that
	$$\|F_n(t_1, t_2)\| = \|(t_1, t_2)\|^{\deg F_n}$$
for all $n\geq 1$.  The conclusion follows immediately.
\qed

\begin{prop}\label{the bounds for K_v^+}
Fix $\lambda\in\Qbar\setminus\{0\}$ not a root of unity, or set $\lambda=1$.
There exists a constant $c = c(\lambda) > 0$ so that
$$\bar{D}^2(0,1) \subset K_v^+\subset \bar{D}^2(0, e^{-2\gamma_v (\lambda) -\frac {\log c}{2^{n-1}}}) $$
for all $v\in \cM_{k,n}$ and all $n \geq 1$.
\end{prop}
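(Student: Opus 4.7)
The plan is to prove the two containments separately. Throughout, I write $F_n^+(t_1,t_2)=(t_2\tilde P_n(t_1,t_2),\,Q_n(t_1,t_2))$, where $\tilde P_n$ and $Q_n$ are the homogeneous polynomials of degrees $2^{n-1}-1$ and $2^{n-1}$ determined by $\tilde P_1=\lambda$, $Q_1=t_1+2t_2$, and the recursion
\begin{equation*}
\tilde P_{n+1}=\lambda\tilde P_nQ_n,\qquad Q_{n+1}=t_2^2\tilde P_n^2+t_1\tilde P_nQ_n+Q_n^2,
\end{equation*}
which one derives from $F_{n+1}^+(t_1,t_2)=t_2^{2^n}F_{t_1/t_2}(F_{t_1/t_2}^n(1,1))$.

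For the lower containment $\bar D^2(0,1)\subset K_v^+$, a routine induction using the ultrametric inequality together with $|\lambda|_v=1$ and $|2|_v\leq 1$ shows that every monomial coefficient of $\tilde P_n$ and $Q_n$ has $v$-absolute value at most $1$. Hence $\|F_n^+(t_1,t_2)\|_v\leq 1$ for all $n$ whenever $\|(t_1,t_2)\|_v\leq 1$, yielding $G_v^+(t_1,t_2)\leq 0$ in the limit.

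For the upper containment $K_v^+\subset\bar D^2(0,R_n)$ with $R_n=e^{-2\gamma_v(\lambda)-\log c/2^{n-1}}$, the homogeneity $G_v^+(\alpha t_1,\alpha t_2)=G_v^+(t_1,t_2)+\log|\alpha|_v$ reduces the claim to the lower bound
\begin{equation*}
G_v^+(a,b)\geq 2\gamma_v(\lambda)+\frac{\log c}{2^{n-1}}\quad\text{on the unit sphere }\{\|(a,b)\|_v=1\}.
\end{equation*}
The first key observation is that the formula (\ref{C formula}) for $C_n(\lambda)$ involves only the factors $1+\lambda+\cdots+\lambda^i$ with $1\leq i\leq n-1$, each of $v$-absolute value exactly $1$ by the defining property of $\cM_{k,n}$; hence $|C_n(\lambda)|_v=1$. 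Combined with the coefficient bound from the first step and the ultrametric, this yields $\|F_n^+(z)\|_v=\|z\|_v^{2^{n-1}}$ for every $z\in\C_v^2\setminus\{(0,0)\}$; equivalently, this is \cite[Lemma 10.1]{BRbook} applied with $B_1=1$ and $|\Res(F_n)|_v=1$, the latter also following from (\ref{resultant formula}) by the same analysis.

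Starting from this exact equality, I iterate Lemma \ref{constant c}. For $z=(t_1,t_2)$ with $|t_1|_v\geq|t_2|_v>0$ and $t=t_1/t_2$, the identity $F_{n+k}^+(z)=t_2^{2^{n+k-1}}F_t^k(F_t^n(1,1))$ combined with the lemma gives
\begin{equation*}
G_v^+(t_1,t_2)\geq\frac{\log c+\log|t_2|_v}{2^{n-1}}+\Bigl(1-\frac{1}{2^{n-1}}\Bigr)\log|t_1|_v.
\end{equation*}
Restricted to $(a,b)$ on the unit sphere with $|a|_v=1$, the second term vanishes and the resulting bound dominates $2\gamma_v(\lambda)+\log c/2^{n-1}$ whenever $|b|_v\geq e^{2^n\gamma_v(\lambda)}$. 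For the remaining small range $|b|_v<e^{2^n\gamma_v(\lambda)}$, I use the refined polynomial estimate $|Q_j(1,s)|_v=|C_j(\lambda)|_v$ (valid for every $|s|_v<|C_j(\lambda)|_v$ by ultrametric applied to the expansion of $Q_j(1,s)$ in powers of $s$, since all non-constant coefficients have $v$-absolute value $\leq 1$). Choosing $j$ to be the largest integer with $|C_j(\lambda)|_v>|b|_v$ and iterating Lemma \ref{constant c} beyond the $j$-th step, combined with the convergence $\frac{\log|C_j(\lambda)|_v}{2^{j-1}}\to\gamma_v(\lambda)$ from Theorem \ref{non-archimedean convergence}, yields the required bound. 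The symmetric case $|b|_v>|a|_v=1$ is handled analogously.

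The principal obstacle will be carefully matching these two regimes --- the Lemma \ref{constant c} estimate for moderate $|b|_v$ and the refined polynomial estimate for small $|b|_v$ --- to produce a single lower bound for $G_v^+$ valid on the entire unit sphere, uniformly in $n$.
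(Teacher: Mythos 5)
Your skeleton is the paper's own: the first containment via integrality of the coefficients of $F_n^+$ is identical, and for the second containment your decisive device --- for $|b|_v<|C_j(\lambda)|_v$ the ultrametric forces $\|F_j^+(1,b)\|_v=|C_j(\lambda)|_v$, after which Lemma \ref{constant c}, iterated exactly as in (\ref{apply H bound}), controls all later iterates --- is precisely what the paper does. Your preliminary observation that $F_n^+$ has unit resultant at $v\in\cM_{k,n}$ by (\ref{resultant formula}), so that $\|F_n^+(z)\|_v=\|z\|_v^{2^{n-1}}$ exactly and hence $G_v^+(1,b)\geq 2^{1-n}(\log c+\log|b|_v)$, is correct and neatly disposes of the range $|b|_v\geq e^{2^n\gamma_v(\lambda)}$ (the paper instead just records $G_v^+=0$ when $|b|_v=1$); but this is a variant of the same argument, not a different route.

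The genuine gap is exactly where you park the ``principal obstacle,'' and it is not mere bookkeeping. In the small range you choose $j$ with $|C_{j+1}|_v\leq|b|_v<|C_j|_v$ and, after iterating Lemma \ref{constant c}, you have
$$G_v^+(1,b)\;\geq\;\frac{\log|C_j(\lambda)|_v}{2^{j-1}}+\frac{\log c+\log|b|_v}{2^{j-1}},$$
but you then invoke only the convergence $\frac{\log|C_j(\lambda)|_v}{2^{j-1}}\to\gamma_v(\lambda)$. Convergence alone is useless here: the index $j$ depends on $b$ and ranges over all $j\geq n$, so you need a \emph{one-sided bound, uniform in $j$}, comparing $\frac{\log|C_j|_v}{2^{j-1}}$ and $\frac{\log|b|_v}{2^{j-1}}\geq\frac{\log|C_{j+1}|_v}{2^{j-1}}$ with $\gamma_v(\lambda)$ from below; if the convergence were from below the estimate would collapse. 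The paper gets such control from the explicit factorization (\ref{C formula}): $\frac{\log|C_j|_v}{2^{j-1}}$ is (up to its last factor) a subsum of the non-positive series defining $2\gamma_v(\lambda)$, which is what lies behind the displayed inequality $\frac{\log|C_j|_v}{2^{j-1}}\geq\gamma_v(\lambda)$ in its proof; you never state or prove anything of this kind. Moreover, once you do write the estimate out, the lower end $|b|_v\geq|C_{j+1}|_v$ contributes a term of size $2\gamma_v(\lambda)$ (since the natural comparison is $\frac{\log|C_{j+1}|_v}{2^{j}}$ versus $\gamma_v$), so this scheme naturally lands at an exponent like $3\gamma_v(\lambda)+\log c/2^{j-1}$ rather than the claimed $2\gamma_v(\lambda)+\log c/2^{n-1}$; reconciling that discrepancy (or proving a sharper comparison for the $C_j$'s, or settling for a weaker constant and checking it still suffices downstream) is precisely the work your proposal omits, so the upper containment is not yet established as written.
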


\proof
Fix $n\geq 1$ and $v\in\cM_{k,n}$.  For each $m\geq 1$, the coefficients of $F_m$ lie in the valuation ring of $k$ (i.e.~ have absolute value $\leq 1$).  It follows immediately that $\|F_m(t,s)\|_v \leq 1$ for all $m$ and all $\|(t,s)\|_v \leq 1$, and therefore, $G_v^+(t,s) \leq 0$ on $\bar{D}^2(0,1)$.  Consequently, $\bar{D}^2(0,1) \subset K_v^+$.

Suppose $|s|_v = 1$ and $|t|_v \leq 1$.  The resultant of the polynomial map $F_{\lambda,t}$ is $\lambda^2$, so it has $v$-adic absolute value $= 1$. The non-archimedean estimates of \cite[Lemma 10.1]{BRbook} imply that every iterate of $(z,w) = (1,1)$ under $F_{\lambda, t/s}$ will have norm 1.  Therefore, the $v$-adic norm of
 $$F_m(t, s) = s^{2^{n-1}} F^m_{\lambda, t/s}(1,1)$$
is also equal to $1$ for all $m$.  Consequently,  $G^+_v(t,s) = 0$ whenever $|t|_v \leq 1$ and $|s|_v = 1$.   


The coefficients $C_i$ of $F_i$, defined in (\ref{C coeff}) with formulas in (\ref{C formula}), satisfy $|C_i|_v=1$ for $i \leq n$ and $|C_i|_v<1$ for $i > n$.  In fact, from the explicit expressions for $C_i$, we see that $|C_i|_v$ form a non-increasing sequence with
	$$\lim_{i\to\infty} |C_i|_v = 0$$
and the sequence of expressions $ \frac{\log |C_i|_v}{2^{i-1}}$ also form a non-increasing sequence with
	$$\lim_{i\to\infty} \frac{\log |C_i|_v}{2^{i-1}} = \gamma_v(\lambda).$$
Fix $s\in\C_v$ with $|s|_v<1$,  and choose $j\geq n$ so that
    $$|C_{j+1}|_v\leq |s|_v <|C_j|_v. $$
Then by Lemma \ref{constant c} (applied exactly as in (\ref{apply H bound}) in the proof of Theorem \ref{convergence}), we have
    $$\frac{\log \|F_m^+(1,s)\|_v}{2^{m-1}}\geq \frac{\log \|F^+_j(1,s)\|_v}{2^{j-1}}+\frac{\log c|s|_v}{2^{j-1}}$$
for all $m \geq j$.
Since  $|C_{j+1}|_v\leq|s|_v <|C_j|_v$ and all the coefficients of $F_j(1,s)$ are bounded by 1, the constant term $C_j$ dominates the norm of $F_j(1,s)$ (that is, $\|F_j(1,s)\|_v=|C_j|_v$); hence
    $$\frac{\log \|F_m^+(1,s)\|_v}{2^{m-1}}\geq \frac{\log |C_j|_v}{2^{j-1}}+\frac{\log (c |C_{j+1}|_v)}{2^{j-1}}.$$
Letting $m\to \infty$, and since $\frac{\log |C_j|_v}{2^{j-1}} \geq \gamma_v(\lambda)$ for all $j$,  we have
    $$G_v^+(1,s)\geq 2\gamma_v (\lambda) +\frac {\log c}{2^{j-1}}.$$
It follows that
	$$G_v^+(1,s)\geq  2\gamma_v (\lambda) +\frac {\log c}{2^{n-1}} $$
for all $s$ with $|s|_v<1$, and we conclude that
	$$K_v^+\subset \bar{D}^2(0, e^{-2\gamma_v (\lambda) -\frac {\log c}{2^{n-1}}}).  $$
 \qed

\bigskip
\section{The equidistribution theorem}
\label{equidistribution section}

Throughout this section, we fix $\lambda\not= 0$ in $\Qbar$, and fix a number field $k$ containing $\lambda$.  As in \cite{Call:Silverman}, we define canonical height functions $\hat{h}^+$ and $\hat{h}^-$ on parameters $t\in \Per_1(\lambda)^{cm} (\Qbar)$, by
	$$\hat{h}^\pm(t) := \hat{h}_{f_t}(\pm1) = \lim_{n\to\infty} \frac{1}{2^n} h(f_t^n(\pm1)),$$
where $h$ is the logarithmic Weil height on $\P^1(\Qbar)$ and $\hat{h}_{f_t}$ is the canonical height of the morphism $f_t$.

\begin{theorem}  \label{equidistribution at all places}
Assume that $\lambda\in\Qbar\setminus\{0\}$ is not a root of unity, or set $\lambda=1$.  Let $\{S_n\}$ be any non-repeating sequence of $\Gal(\kbar/k)$-invariant finite sets in $\Per_1(\lambda)^{cm}$ for which
	$$\hat{h}^+(S_n) \to 0$$
as $n\to\infty$.  Then the sets $S_n$ are equidistributed with respect to the measure $\mu^+_\lambda$.  In fact, for each place $v$ of $k$, the discrete measures
	$$\mu_n = \frac{1}{|S_n|} \sum_{t\in S_n} \delta_t$$
converge weakly to the measure $\mu_v^+$ on the Berkovich projective line $\P^{1,an}_v$.   Similarly for $\hat{h}^-$ and the measures $\{\mu^-_v\}$.
\end{theorem}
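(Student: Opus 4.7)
The plan is to invoke the quasi-adelic arithmetic equidistribution theorem of \cite{Ye:quasi}, a variant of the classical arithmetic equidistribution theorems \cite{Baker:Rumely:equidistribution, FRL:equidistribution} designed exactly to accommodate dynamical Green functions that contribute nontrivially at infinitely many places of a number field. The local inputs for this framework are already in hand: at each place $v$ of $k = \Q(\lambda)$, the continuous homogeneous potential $G_v^+$ constructed in Theorem \ref{non-archimedean convergence} descends to a continuous potential on $\P^{1,an}_v$ for the probability measure $\mu_v^+$, and similarly for $G_v^-$ and $\mu_v^-$. It therefore suffices to realize $\hat{h}^+$ as the canonical height attached to this data and then check the quasi-adelic hypotheses.

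My first step is the adelic height formula. Starting from $\hat{h}^+(t) = \lim_n 2^{-n} h(f_t^n(+1))$ and using the identity $H^+_{\lambda,v}(t_1/t_2) = \lim_n 2^{-n} \log \|F_t^n(1,1)\|_v$ together with the relation $G_v^+(t_1,t_2) = 2 H^+_{\lambda,v}(t_1/t_2) + \log|t_2|_v$ from Theorem \ref{non-archimedean convergence} (valid for $t_2 \neq 0$) and the product formula, I would derive
\begin{equation*}
\hat{h}^+(t) \;=\; \frac{1}{2\,[k(t):k]} \sum_{\sigma \colon k(t) \hookrightarrow \kbar} \; \sum_{v \in \cM_k} \frac{N_v}{[k:\Q]}\, G_v^+\bigl(\sigma(t_1),\, \sigma(t_2)\bigr)
\end{equation*}
for any lift $(t_1,t_2)\in\kbar^2\setminus\{0\}$ of $t$, so that $\hat{h}^+ \geq 0$ and $\hat{h}^+(t) = 0$ precisely when $+1$ is preperiodic for $f_t$.

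The heart of the argument is verifying that the collection $\{(G_v^+,\mu_v^+)\}_{v\in\cM_k}$ satisfies the quasi-adelic hypotheses of \cite{Ye:quasi}. I would check three items. Continuity of each $G_v^+$ on $\P^{1,an}_v$ is furnished by Theorems \ref{convergence} and \ref{non-archimedean convergence}. Triviality at ``most" places is Lemma \ref{trivial K}: for every $v\in\cM_{k,0}$ one has $G_v^+ = \log\|\cdot\|_v$ identically, so $\mu_v^+$ is the standard Gauss-point measure there. At the remaining places, which lie in $\cM_{k,n}$ for some $n\geq 1$ together with the finitely many archimedean and exceptional non-archimedean places, Proposition \ref{the bounds for K_v^+} combined with Lemma \ref{product formula convergence} provides the quantitative summability on the deviation of $G_v^+$ from $\log\|\cdot\|_v$ that the quasi-adelic framework demands. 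Once these three items are in place, \cite{Ye:quasi} produces the weak convergence $\mu_n \to \mu_v^+$ at every place $v$ for any $\Gal(\kbar/k)$-invariant non-repeating sequence $S_n$ with $\hat{h}^+(S_n) \to 0$. The statement for $\hat{h}^-$ and $\mu_v^-$ follows by the symmetry $t\mapsto -t$, and Theorem \ref{equidistribution} is the special case where $S_n \subset \Preper^+_\lambda$ consists of preperiodic parameters (on which $\hat{h}^+$ vanishes identically).

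I expect the main obstacle to be the third step. The classical arithmetic equidistribution results presuppose a finite ``bad" set of places, whereas here $\gamma_v(\lambda)$ can be nonzero on infinitely many $v$; that is precisely why an off-the-shelf application is impossible. The crux is that, although the bad set is infinite, its contributions are summable in the sense isolated by Proposition \ref{the bounds for K_v^+}: the excess over the trivial potential at $v\in\cM_{k,n}$ is bounded by $-2\gamma_v(\lambda) - (\log c)/2^{n-1}$, which decays geometrically in $n$ and matches the rate at which the coefficients of $F_n^+$ fail to be $v$-integral. A secondary subtlety, already handled in the previous sections, is the non-compactness of the bifurcation locus for $|\lambda|=1$: the continuity of $G_v^\pm$ across $t=\infty$ established in Theorems \ref{convergence} and \ref{non-archimedean convergence} is what upgrades the measures from the affine Berkovich line to genuine probability measures on $\P^{1,an}_v$, a prerequisite for invoking \cite{Ye:quasi}.
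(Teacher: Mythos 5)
Your proposal is correct and follows essentially the same route as the paper: the paper likewise identifies $\hat{h}^\pm$ with the canonical heights attached to the quasi-adelic measures $\{\mu_v^\pm\}$ (Propositions \ref{quasi-adelic} and \ref{same heights}) and then applies the equidistribution theorem of \cite{Ye:quasi} (Theorem \ref{quasi-adelic equidistribution}). The only point you leave under-specified --- the summability over the infinitely many nontrivial places, needed both for the quasi-adelic condition and to justify exchanging the limit with the sum over places in your height formula --- is supplied in the paper by Lemma \ref{global} (absolute convergence of $\sum_v N_v\gamma_v(\lambda)$ and of $\sum_v N_v\log\capacity(K_v^\pm)$, via the product formula) together with the counting bound $|\cM_{k,n}|\leq Cn$ of Lemma \ref{places bound}, rather than by Lemma \ref{product formula convergence} alone, which is only a place-by-place statement.
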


\noindent
The main idea of the proof is to show that $\hat{h}^+$ and $\hat{h}^-$ are canonically associated to the ``quasi-adelic" measures $\{\mu^{\pm}_v\}$.  We use Theorems \ref{convergence} and \ref{non-archimedean convergence}.  Then we may apply the arithmetic equidistribution theorem (as appearing in \cite{Ye:quasi}, modified from the original treatements in \cite{BRbook, FRL:equidistribution}) to obtain the theorem.

\subsection{Quasi-adelic measures and equidistribution.}
For each $v \in \cM_k$ there is a distribution-valued Laplacian operator $\Delta$ on $\PP^1_{\Berk,v}$.  For example, the function $\log^+|z|_v$ on $\PP^1(\CC_v)$ extends naturally to a continuous real valued function $\PP^1_{\Berk,v} \backslash \{ \infty \} \to \RR$ and
	$$\Delta \log^+|z|_v = \lambda_v - \delta_{\infty},$$
where $\lambda_v$ is the uniform probability measure on the complex unit circle $\{ |z| = 1 \}$ when $v$ is archimedean and $\lambda_v$ is a point mass at the Gauss point of $\PP^1_{\Berk,v}$ when $v$ is non-archimedean.  (The sign of the Laplacian $\Delta$ is reversed from that of \cite{BRbook} or the presentation in \cite{BD:polyPCF}, to match the sign convention from complex analysis.)

A probability measure $\mu_v$ on $\PP^1_{\Berk,v}$ is said to have a {\em continuous potential} if $\mu_v - \lambda_v = \Delta g$ with $g : \PP^1_{\Berk,v} \to \RR$ continuous.  If $\mu_v$ has a continuous potential then there is a corresponding {\em  Arakelov-Green function} $g_{\mu_v} : \PP^1_{\Berk,v} \times \PP^1_{\Berk,v} \to \RR \cup \{ +\infty \}$ which is characterized by the differential equation $\Delta_x g_{\mu_v}(x,y) = \mu - \delta_y$ and the normalization
\begin{equation} \label{normalization}
	\iint g_{\mu_v}(x,y) d\mu(x) d\mu(y) = 0.
\end{equation}
Working with homogeneous coordinates, $g_{\mu_v}$ may be computed in terms of a continuous potential function for $\mu_v$,
	$$G_v:  (\C_v)^2\setminus \{(0,0)\} \to \R$$
satisfying $G_v(z_1, z_2) = g(z_1/z_2) + \log^+|z_1/z_2|_v + \log|z_2|_v$ for some continuous potential $g$ as described above.  For $x, y\in \P^1(\C_v)$, the Arakelov-Green function for $\mu_v$ is given by
\begin{equation}  \label{explicit g}
g_{\mu_v}(x,y) = -\log|\tilde{x} \wedge \tilde{y}|_v + G_v(\tilde{x}) + G_v(\tilde{y}) + \log \capacity(K_v),
\end{equation}
for any choice of lifts $\tilde{x}$ of $x$ and $\tilde{y}$ of $y$ to $(\C_v)^2$.  Here
	$$K_v = \{(a,b)\in (\C_v)^2: G_v(a,b) \leq 0\}.$$
The homogeneous capacity $\capacity(K_v)$ is exactly what is needed to normalize $g_{\mu_v}$ according to (\ref{normalization}).  See \cite[\S10.2]{BRbook} for details, in the setting where $K_v$ is the filled Julia set of a homogeneous polynomial lift of a rational function defined over $k$.

A {\em quasi-adelic measure} on $\PP^1$ (with respect to the field $k$) is a collection ${\mathbb \mu} = \{ \mu_v \}_{v \in M_k}$ of probability measures on $\P^{1,an}_v$ with continuous potentials for which the product
	$$\prod_{v\in M_k} ( r(K_v) / \capacity(K_v)^{1/2} )^{N_v}$$
converges strongly to a positive real number, where
	$$r(K_v) = \inf\{r>0: K_v \subset \bar{D}^2_v(0, r)\}$$
is the outer radius of $K_v$.  (Strong convergence of a product is, by definition, absolute convergence of the sum of logarithms of the entries.)

If $\rho,\rho'$ are measures on $\PP^{1,an}_v$, we define the
{\em $\mu_v$-energy} of $\rho$ and $\rho'$ by
$$( \rho, \rho' )_{\mu_v} := \frac{1}{2} \iint_{\PP^1_{\Berk,v} \times \PP^1_{\Berk,v} \backslash {\rm Diag}} g_{\mu_v}(x,y) d\rho(x) d\rho'(y).$$
Let $S$ be a finite, $\Gal(\kbar/k)$-invariant subset of $\PP^1(\kbar)$ with $|S|>1$.  For each $v \in \cM_k$, we denote by $[S]_v$ the discrete probability measure on $\PP^{1,an}_v$ supported equally on the elements of $S$.
For a quasi-adelic measure $\mu =\{\mu_v\}$, the {\em $\mu$-canonical height} of $S$ is defined by
\begin{equation}  \label{height definition}
\hhat_{{\mu}}(S) :=  \frac{|S|}{|S|-1} \sum_{v \in \cM_k} N_v \cdot ([S]_v,[S]_v)_{\mu_v}.
\end{equation}
The constants $N_v$ are the same as those appearing in the product formula (\ref{product formula}).

\begin{remark}  The definition of $\hhat_{\mu}$ differs slightly from that given in \cite{BD:polyPCF} or \cite{FRL:equidistribution}, but agrees with the definition in \cite{DWY:Lattes}; the factor of $|S|/(|S|-1)$ is included to match the usual definition of canonical height.  See Proposition \ref{same heights} and \cite[Lemma 10.27]{BRbook}.   With this normalization, the function $\hhat_\mu$ will extend naturally to sets with $|S|=1$, to define a function on $\P^1(\kbar)$.
\end{remark}

The following equidistribution theorem is a modification of the ones appearing in \cite{BRbook, FRL:equidistribution}; the proof is given in \cite{Ye:quasi}.

\begin{theorem} \label{quasi-adelic equidistribution}
Let $\hhat_{{\mu}}$ be the canonical height associated to a quasi-adelic measure $\mu$.
Let $\{S_n\}_{n\geq 0}$ be any non-repeating sequence of $\Gal(\kbar/k)$-invariant finite subsets of $\PP^1(\kbar)$ for which $\hhat_{{\mathbb \mu}}(S_n) \to 0$ as $n \to \infty$.  Then $[S_n]_v$ converges weakly to $\mu_v$ on $\PP^1_{\Berk,v}$ as $n \to \infty$ for all $v \in \cM_k$.
\end{theorem}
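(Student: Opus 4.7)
The plan follows the standard arithmetic equidistribution framework of Baker--Rumely and Favre--Rivera-Letelier, adapted to accommodate the quasi-adelic nature of $\mu$. First I would unpack the hypothesis using the definition~(\ref{height definition}): setting $e_v(S) := ([S]_v,[S]_v)_{\mu_v}$, the assumption reads
\begin{equation*}
\sum_{v\in \cM_k} N_v \, e_v(S_n) \;=\; \frac{|S_n|-1}{|S_n|}\,\hhat_\mu(S_n) \;\longrightarrow\; 0.
\end{equation*}
The overall strategy is to show that $e_v(S_n) \to 0$ at each individual place and then to convert local energy convergence into weak convergence $[S_n]_v \to \mu_v$ on $\PP^{1,an}_v$.

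To isolate each term I would establish a uniform Fekete--Szeg\H{o} type lower bound: using the explicit formula~(\ref{explicit g}) for the Arakelov--Green function together with the capacity formula of Section~\ref{sets}, one verifies that
\begin{equation*}
e_v(S) \;\geq\; C_v \;-\; \frac{M_v}{|S|}
\end{equation*}
for every finite $\Gal(\kbar/k)$-invariant set $S\subset \PP^1(\kbar)$, where $C_v$ depends only on the geometric data $r(K_v)$ and $\capacity(K_v)$ and $M_v$ is a controllable error term.  Since $\mu_v$ has a continuous potential and the points of $S$ are distinct, the off-diagonal singularities of $g_{\mu_v}$ contribute in a controllable way.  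The strong convergence of $\prod_v(r(K_v)/\capacity(K_v)^{1/2})^{N_v}$ built into the quasi-adelic hypothesis then guarantees that $\sum_v N_v C_v$ is absolutely convergent, so combining these bounds with $\sum_v N_v e_v(S_n) \to 0$ and the pointwise inequality $\liminf_{n\to\infty} e_v(S_n) \geq 0$ forces $e_v(S_n) \to 0$ at every place.

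The final step is the local Mahler-type criterion: on $\PP^{1,an}_v$, the convergence $([S_n]_v,[S_n]_v)_{\mu_v}\to 0$ implies weak convergence $[S_n]_v\to\mu_v$.  I would pass to any weakly convergent subsequence with limit $\nu$, apply lower semicontinuity of the energy together with the continuous-potential hypothesis on $\mu_v$ to extract $(\nu-\mu_v,\nu-\mu_v)_{\mu_v}\leq 0$, and conclude $\nu=\mu_v$ from positive-definiteness of the $\mu_v$-energy pairing on signed measures of total mass zero --- a standard potential-theoretic fact valid on Berkovich space whenever $\mu_v$ has a continuous potential.

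The principal obstacle is obtaining the uniform local lower bounds in the second step while retaining summability across the possibly infinite set of nontrivial places.  In the classical adelic setting of \cite{BRbook, FRL:equidistribution}, only finitely many $\mu_v$ differ from the Gauss/Haar measure on $\PP^{1,an}_v$, so place-by-place bookkeeping is routine.  Here, Theorem~\ref{non-archimedean convergence} and Proposition~\ref{the bounds for K_v^+} exhibit nontrivial $\mu_v$ at infinitely many places (indexed by the subfamilies $\cM_{k,n}$), and the local bound must degrade in $n$ at a rate that remains absolutely summable under the strong-convergence condition; calibrating the Fekete bound to the quasi-adelic tail in this manner is exactly the additional ingredient supplied by \cite{Ye:quasi}.
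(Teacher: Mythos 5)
The paper does not actually prove this theorem: it is imported wholesale from \cite{Ye:quasi}, with only the remark that it is a modification of the equidistribution theorems of \cite{BRbook} and \cite{FRL:equidistribution}, so there is no internal proof to compare against line by line. Your outline --- local energies $e_v(S_n)=([S_n]_v,[S_n]_v)_{\mu_v}$, a Fekete-type lower bound whose defect at each place is controlled by $\log\bigl(r(K_v)/\capacity(K_v)^{1/2}\bigr)$ and hence is summable precisely by the quasi-adelic hypothesis, a Fatou-type interchange to force $e_v(S_n)\to 0$ place by place, and lower semicontinuity plus positive-definiteness of the $\mu_v$-energy pairing on measures of total mass zero to upgrade this to weak convergence $[S_n]_v\to\mu_v$ --- is exactly the modification that the citation refers to, so your route is essentially the intended one. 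For a complete write-up two points would still need care: one must first argue that $|S_n|\to\infty$ (non-repeating together with $\hhat_\mu(S_n)\to 0$ does not give this for free in the quasi-adelic setting, where a Northcott-type statement requires proof), and at archimedean places the discrete-energy error term is of size $O(\log|S_n|/|S_n|)$ rather than $O(1/|S_n|)$; both issues are among the technical points addressed in \cite{Ye:quasi}.
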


\subsection{Bifurcation measures are quasi-adelic.}
Now we prove that the escape-rate functions $G_v^\pm$ from Theorems \ref{convergence} and \ref{non-archimedean convergence} are potential functions for a quasi-adelic measure.  For each $n\geq 1$, recall from \S\ref{subsection bounds} that $\cM_{k, n}$ denotes the set of all non-archimedean places in $\cM_k$ such that $|\lambda|_v=1$,  $|1 + \lambda + \cdots + \lambda^i|_v=1$ for all $i< n$, and  $|1+ \lambda + \cdots + \lambda^n|_v<1$.

\begin{lemma}\label{places bound}
For any $\lambda\in \Qbar\setminus\{0\}$ not a root of unity, or for $\lambda=1$, there exists a constant $C = C(\lambda, k)$ so that
     $$|\cM_{k, n}| \leq \, C \, n$$
for all $n\geq 1$, where the places $v\in \cM_{k, n}$ are counted with multiplicity $N_v$.
\end{lemma}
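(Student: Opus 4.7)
My plan is to bound the $N_v$-weighted count $|\cM_{k,n}|$ by applying the product formula to the algebraic numbers
$$\alpha_n \; := \; 1+\lambda+\lambda^2+\cdots+\lambda^n,$$
combined with a logarithmic Weil height estimate. Note that $\alpha_n\neq 0$, since for $\lambda=1$ it equals $n+1$ and for $\lambda\neq 1$ a non-root of unity we have $(1-\lambda)\alpha_n = 1-\lambda^{n+1}\neq 0$.

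First I would show that for every $v\in\cM_{k,n}$, the quantity $-\log|\alpha_n|_v$ is bounded below by a positive constant depending only on $k$. Such $v$ is non-archimedean with $|\alpha_n|_v<1$, so $\ord_v(\alpha_n)\geq 1$ and
$$-\log|\alpha_n|_v \; \geq \; \frac{\log p_v}{e_v} \; \geq \; \frac{\log 2}{[k:\Q]},$$
using $e_v\leq [k:\Q]$ and $p_v\geq 2$.

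Next I would invoke the product formula in the form
$$\sum_{v\,:\,|\alpha_n|_v<1} N_v\,\log|\alpha_n|_v^{-1} \; = \; [k:\Q]\, h(\alpha_n),$$
which follows from $\sum_v N_v\log|\alpha_n|_v=0$ and the definition of the Weil height. Restricting the left-hand sum to $\cM_{k,n}\subset\{v:|\alpha_n|_v<1\}$ and applying the uniform lower bound yields
$$\frac{\log 2}{[k:\Q]}\;|\cM_{k,n}| \; = \; \frac{\log 2}{[k:\Q]}\sum_{v\in\cM_{k,n}} N_v \; \leq \; \sum_{v\in\cM_{k,n}} N_v\,\log|\alpha_n|_v^{-1} \; \leq \; [k:\Q]\,h(\alpha_n).$$

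Finally I would show $h(\alpha_n)=O(n)$. For $\lambda\neq 1$, the identity $(1-\lambda)\alpha_n = 1-\lambda^{n+1}$ combined with standard height inequalities gives
$$h(\alpha_n) \; \leq \; h(1-\lambda^{n+1}) + h(1-\lambda) \; \leq \; (n+1)h(\lambda) + h(1-\lambda) + \log 2;$$
for $\lambda=1$, $\alpha_n=n+1$ and $h(\alpha_n)=\log(n+1)$. In both cases $h(\alpha_n)=O(n)$, so $|\cM_{k,n}|\leq C\, n$ with $C = C(\lambda,k)$ absorbing $[k:\Q]^2/\log 2$ and the implicit constant in the height bound. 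I do not anticipate any real obstacle; the only point requiring minor care is the uniform lower bound on $-\log|\alpha_n|_v$, which uses nothing beyond $e_v\leq[k:\Q]$.
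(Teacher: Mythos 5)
Your proof is correct and follows essentially the same route as the paper: both apply the product formula to $\alpha_n=1+\lambda+\cdots+\lambda^n$, use the ramification bound $e_v\leq[k:\Q]$ to force $|\alpha_n|_v\leq 2^{-1/[k:\Q]}$ at every $v\in\cM_{k,n}$, and bound the total contribution of the places where $|\alpha_n|_v>1$ linearly in $n$. Your Weil height estimate $h(\alpha_n)\leq(n+1)h(\lambda)+h(1-\lambda)+\log 2$ is just a more explicit packaging of the paper's bound $\prod_{v:\,|\alpha_n|_v>1}|\alpha_n|_v^{N_v}\leq M^n$.
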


\proof
We begin with a basic observation from algebraic number theory.  Let $m = [k:\Q]$, the degree of the field extension.  Suppose that $v$ is a place of $k$ extending the $p$-adic absolute value on $\Q$ for a prime $p$.  Then
      $$|x|_v<1 \; \implies \;  |x|_v\leq \frac{1}{p^{1/m}}$$
for all $x\in k$.  Indeed, the absolute value will be bounded by $p^{-1/e}$ where $e$ is the index of ramification of the field $k$ at the prime $p$; and $e\leq m$.

The proof of the lemma follows from the product formula and the above control on the absolute values.  There are only finitely many places $v\in \cM_k$ for which $|1+\lambda+ \cdots + \lambda^i|_v>1$ for some $i\geq 1$. Then there is an $M>1$ such that for any $n\geq 1$,
    $$\prod_{v\in \cM_k, |1+\lambda+ \cdots + \lambda^n|_v>1} |1+\lambda+ \cdots + \lambda^n|_v^{N_v} \leq M^n.$$
For each $v\in \cM_{k,n}$, we have
    $$|1+\lambda+ \cdots + \lambda^n|_v^{N_v}\leq \frac{1}{2^{\frac{N_v}{m}}}.$$
By the product formula, we see that
\begin{eqnarray*}
\prod_{v\in\cM_{k,n}}  |1+\lambda+ \cdots + \lambda^n|_v^{N_v} &\geq& \prod_{v,  |1+\lambda+ \cdots + \lambda^n|_v<1}  |1+\lambda+ \cdots + \lambda^n|_v^{N_v} \\
	&=& \prod_{v,  |1+\lambda+ \cdots + \lambda^n|_v>1}  |1+\lambda+ \cdots + \lambda^n|_v^{-N_v} \\
	&\geq& \frac{1}{M^n}.
\end{eqnarray*}
 Therefore,
    $$\frac{M^n}{2^{\frac{|\cM_{k,n}|}{m}}}\geq 1.$$
and we conclude that
    $$|\cM_{k,n}|\leq n m \log_2 M, $$
for any $n$.  Set $C = m\log_2M$.
\qed

\begin{prop} \label{quasi-adelic}
For each $\lambda\in\Qbar\setminus\{0\}$ not a root of unity, or for $\lambda=1$, the bifurcation measures $\{\mu^+_v\}$ and $\{\mu^-_v\}$ are quasi-adelic.
\end{prop}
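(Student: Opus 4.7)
The plan is to verify the two requirements of the quasi-adelic definition in \S\ref{equidistribution section}: that each $\mu_v^\pm$ has a continuous potential on $\PP^{1,an}_v$, and that the product $\prod_{v\in\cM_k}(r(K_v^\pm)/\capacity(K_v^\pm)^{1/2})^{N_v}$ converges strongly to a positive real number.  The first requirement is immediate from Theorems \ref{convergence} and \ref{non-archimedean convergence}.  For the second, by the symmetry $H_\lambda^-(t)=H_\lambda^+(-t)$ it suffices to treat $K_v^+$, and after taking logarithms the task reduces to showing that $\sum_v N_v\log r(K_v^+)$ and $\sum_v N_v\log\capacity(K_v^+)$ both converge absolutely.

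For the capacity sum I would start from the explicit formula
\[
   \log\capacity(K_v^+) = -2\log|\lambda|_v - 3\sum_{j\geq1}4^{-j-1}\log|1+\lambda+\cdots+\lambda^j|_v
\]
of Theorem \ref{cap}.  For any $\alpha\in k^*$, the product formula yields $\sum_v N_v|\log|\alpha|_v|=2[k:\QQ]h(\alpha)$, where $h$ is the absolute Weil height.  The identity $(1-\lambda)(1+\lambda+\cdots+\lambda^j)=1-\lambda^{j+1}$ (or $1+\lambda+\cdots+\lambda^j=j+1$ when $\lambda=1$) gives $h(1+\lambda+\cdots+\lambda^j)=O(j)$, so Fubini grants absolute convergence of the double sum, and the product formula forces $\sum_v N_v\log\capacity(K_v^+)=0$.

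For the outer-radius sum I would decompose
\[
   \cM_k = F \sqcup \cM_{k,0} \sqcup \bigsqcup_{n\geq 1}\cM_{k,n},
\]
where $F=\cM_k\setminus\bigcup_{n\geq 0}\cM_{k,n}$ is the finite exceptional set of \S\ref{subsection bounds}.  The set $F$ contributes finitely, while Lemma \ref{trivial K} makes $\log r(K_v^+)=0$ on $\cM_{k,0}$.  For $v\in\cM_{k,n}$ with $n\geq 1$, Proposition \ref{the bounds for K_v^+} provides
\[
   0 \leq \log r(K_v^+) \leq -2\gamma_v(\lambda)-\frac{\log c}{2^{n-1}}.
\]
Since $|1+\lambda+\cdots+\lambda^i|_v=1$ for all $i<n$ at these places, $-2\gamma_v(\lambda)$ equals the non-negative tail $\sum_{i\geq n}2^{-i}(-\log|1+\lambda+\cdots+\lambda^i|_v)$.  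Interchanging summations (valid by non-negativity) and applying the Weil height bound above would give
\[
   \sum_{n\geq 1}\sum_{v\in\cM_{k,n}}N_v\bigl(-2\gamma_v(\lambda)\bigr)\leq \sum_{i\geq 1}2^{-i}\sum_v N_v|\log|1+\lambda+\cdots+\lambda^i|_v|=O\Bigl(\sum_i i\cdot 2^{-i}\Bigr)<\infty,
\]
while Lemma \ref{places bound} controls the second term through $|\cM_{k,n}|\leq Cn$, yielding $|\log c|\sum_{n\geq 1}Cn/2^{n-1}<\infty$.

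Combining these estimates, $\sum_v N_v\log(r(K_v^+)/\capacity(K_v^+)^{1/2})$ converges absolutely, so the product converges strongly to a positive real number; the $-$ case then follows by the symmetry $t\mapsto -t$.  The main obstacle, and the reason the proof requires the detailed analysis of \S\ref{sets}, is arranging the estimate for the outer-radius sum so that the linear growth of $h(1+\lambda+\cdots+\lambda^i)$ and of $|\cM_{k,n}|$ is absorbed by the geometric weights $2^{-i}$ and $2^{-n}$; this is precisely what the splitting by $\cM_{k,n}$ together with the bounds of Proposition \ref{the bounds for K_v^+} is designed to accomplish.
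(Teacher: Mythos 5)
Your proposal is correct and follows essentially the same route as the paper: the same decomposition of $\cM_k$ into the finite exceptional set, $\cM_{k,0}$, and the $\cM_{k,n}$, with Lemma \ref{trivial K}, Proposition \ref{the bounds for K_v^+}, and Lemma \ref{places bound} doing the same work. The only cosmetic difference is that where the paper cites its Lemma \ref{global} for the absolute convergence and vanishing of $\sum_v N_v\gamma_v(\lambda)$ and $\sum_v N_v\log\capacity(K_v^\pm)$, you re-derive that statement inline via the Weil-height bound $h(1+\lambda+\cdots+\lambda^j)=O(j)$ and the product formula, which is the same argument in different clothing.
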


\proof
As continuity of the potentials has already been established (Theorem \ref{non-archimedean convergence}), we need only show that the product
	$$\prod_{v\in \cM_k} ( r(K_v) / \capacity(K_v)^{1/2} )^{N_v}$$
converges strongly to a positive real number.  That is, we need to show the absolute convergence of the sum
	$$\sum_{v\in \cM_k} N_v  \log | r(K_v) / \capacity(K_v)^{1/2} |.$$
Lemma \ref{global} implies that $\prod \capacity(K_v)^{N_v}$ converges strongly to 1.  It remains to show the strong convergence of $\prod_v r(K_v)^{N_v}$.  Recall the definitions of $\cM_{k,n}$ and $\cM_{k,0}$ from \S\ref{subsection bounds}, and recall that the set of places not in $\cM_{k,0}$ or $\cM_{k,n}$ for any $n$ is finite.  From Lemma \ref{trivial K}, we have $K_v = \bar{D}^2(0,1)$ for all $v\in\cM_{k,0}$ so that $r(K_v) = 1$.   For $v\in \cM_{k,n}$, Proposition \ref{the bounds for K_v^+} shows that
	$$1 \leq r(K_v) \leq e^{-2\gamma_v(\lambda) - (\log c)/2^{n-1}}.$$
Strong convergence of $\prod_v r(K_v)^{N_v}$ will then follow from convergence of
	$$\sum_{n=1}^\infty \sum_{v\in \cM_{k,n}}  N_v \left(-2 \gamma_v(\lambda) - \frac{\log c}{2^{n-1}} \right).$$
Lemma \ref{global} implies that the sum of the $N_v \gamma_v(\lambda)$ terms will converge.  Lemma \ref{places bound} shows that $|\cM_{k,n}| \leq C n$ when counted with multiplicities $N_v$, showing that the sum of the $(\log c)/2^{n-1}$ terms will also converge.
\qed

\subsection{Equivalence of two canonical heights.}  Now we show that the Call-Silverman heights $\hat{h}^\pm$, defined at the beginning of \S\ref{equidistribution section}, coincide with the $\mu^\pm$-canonical heights associated to the quasi-adelic measures $\{\mu^\pm_v\}$, defined in (\ref{height definition}).  We begin with a lemma.
In other settings (e.g.~those in \cite[Chapter 10]{BRbook}, \cite{BD:polyPCF} or \cite{DWY:Lattes}), the analogous conclusion of Lemma \ref{global} would be immediate from the product formula, since all but finitely many terms would be 0.  Recall that the definition of $\gamma_v(\lambda)$ appears in Lemma \ref{product formula convergence} and the formula for the capacity is given in Theorem \ref{cap}.

\begin{lemma}  \label{global}
Fix $\lambda\in\Qbar\setminus\{0\}$ not a root of unity, or set $\lambda=1$.  The sums
 $$\sum_{v\in\cM_k} N_v \gamma_v(\lambda) \qquad \mbox{ and } \qquad \sum_{v\in\cM_k} N_v \log\capacity(K_v^\pm)$$
 converge absolutely, and the sum is equal to 0.
\end{lemma}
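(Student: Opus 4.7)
\bigskip
\noindent\textbf{Proof proposal for Lemma \ref{global}.}

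The plan is to expand each local quantity as an explicit weighted sum of logarithms of values of rational functions of $\lambda$, and then swap the order of summation so that each interior sum is a global sum against the product formula. Writing $\alpha_i := 1 + \lambda + \cdots + \lambda^i \in k^*$, the definition of $\gamma_v(\lambda)$ together with Theorem \ref{cap} gives
$$\gamma_v(\lambda) = \frac{1}{2}\sum_{i=1}^\infty \frac{1}{2^i}\log|\alpha_i|_v
\qquad\text{and}\qquad
\log\capacity(K_v^\pm) = -2\log|\lambda|_v - 3\sum_{j=1}^\infty 4^{-j-1}\log|\alpha_j|_v.$$
If each double sum $\sum_v N_v \sum_i (\cdots)$ converges absolutely, then Fubini plus the product formula applied to $\lambda$ and to each $\alpha_i \in k^*$ yields
$\sum_v N_v \gamma_v(\lambda) = 0$ and $\sum_v N_v \log\capacity(K_v^\pm) = 0$.

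The only real task is to justify the absolute convergence. For any $\alpha \in k^*$, the product formula combined with the definition of the logarithmic Weil height $h$ gives
$$\sum_{v\in\cM_k} N_v\, |\log|\alpha|_v| \;=\; 2\sum_{v\in\cM_k} N_v \log^+|\alpha|_v \;=\; 2\,[k:\Q]\, h(\alpha).$$
Applied to $\alpha = \alpha_i$, this reduces everything to a bound on $h(\alpha_i)$.

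The key estimate is that $h(\alpha_i) = O(i)$ as $i\to\infty$. When $\lambda\neq 1$, use the identity $\alpha_i = (1-\lambda^{i+1})/(1-\lambda)$ together with the standard height inequalities $h(\alpha\beta^{-1}) \leq h(\alpha) + h(\beta)$ and $h(\alpha+\beta) \leq h(\alpha)+h(\beta)+\log 2$ to get
$$h(\alpha_i) \;\leq\; (i+1)\,h(\lambda) + \log 2 + h(1-\lambda).$$
When $\lambda = 1$, $\alpha_i = i+1$ has height $\log(i+1)$, which is even smaller. Applying Tonelli,
$$\sum_{v} N_v \sum_{i=1}^\infty \frac{1}{2^i}\,|\log|\alpha_i|_v| \;=\; 2\,[k:\Q] \sum_{i=1}^\infty \frac{h(\alpha_i)}{2^i} \;<\; \infty,$$
and likewise with $4^{-j-1}$ in place of $2^{-i}$. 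This validates the interchange of summation in both $\sum_v N_v \gamma_v(\lambda)$ and $\sum_v N_v \log\capacity(K_v^\pm)$; applying the product formula to $\lambda$ and to each $\alpha_i$ in the resulting inner sums then forces both totals to vanish. The sole (mild) obstacle is producing the linear-in-$i$ bound for $h(\alpha_i)$; everything else is formal manipulation of the product formula.
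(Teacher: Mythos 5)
Your proposal is correct and follows essentially the same route as the paper: expand $\gamma_v(\lambda)$ and $\log\capacity(K_v^\pm)$ in terms of $\alpha_i = 1+\lambda+\cdots+\lambda^i$, establish absolute convergence of the double sum from a bound on $\sum_v N_v\,|\log|\alpha_i|_v|$ that is linear in $i$, then interchange summation and apply the product formula to each $\alpha_i$ (and to $\lambda$). The only difference is cosmetic: you package the linear bound as the Weil-height estimate $h(\alpha_i)\le (i+1)h(\lambda)+O(1)$ via standard height inequalities, whereas the paper derives the same $O(i)$ bound directly from the finiteness of the places with $|1+\lambda+\cdots+\lambda^i|_v>1$ for some $i$ together with the product formula.
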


\proof
There are only finitely many places $v$ for which there is an integer $i > 0$ with $|1+\lambda+\cdots +\lambda^i|_{{v}}>1$.  (For non-archimedean $v$, this condition is equivalent to $|\lambda|_v>1$; there are only finitely many such non-archimedean places.)  Therefore, there exists $M>1$ such that
    $$\prod_{v\in \cM_k, |1+\lambda+ \cdots + \lambda^n|_v>1} |1+\lambda+ \cdots + \lambda^n|_v^{N_v} \leq M^n$$
for all $n\geq 1$.  From the product formula, we see that
  $$\prod_{v\in \cM_k, |1+\lambda+ \cdots + \lambda^n|_v<1} |1+\lambda+ \cdots + \lambda^n|_v^{N_v} \geq \frac{1}{M^n},$$
though the number of such places grows with $n$.  Consequently,
	     $$\sum_{n=1}^{\infty} \frac{1}{r^n} \sum_{v\in \cM_k} N_v \left| \log|1+\lambda+\cdots +\lambda^n|_v \right|<\infty$$
for any constant $r>1$.    Therefore, we can interchange the order of summation and deduce that
	$$\sum_v N_v \sum_{n=1}^\infty \frac{1}{r^n}  \log|1+\lambda+\cdots +\lambda^n|_v =
	 \sum_{n=1}^\infty   \frac{1}{r^n}  \sum_v N_v \log|1+\lambda+\cdots +\lambda^n|_v = 0. $$
For $r=2$, we obtained the desired convergence for $\gamma_v(\lambda)$, and for $r=4$ the sum of the capacities.
\qed

\begin{prop} \label{same heights}
For each $\lambda\in\Qbar\setminus\{0\}$, not a root of unity, or for $\lambda=1$, the Call-Silverman canonical height $\hat{h}^+$ and the $\{\mu^+_v\}$-canonical height $\hat{h}_\mu$ are related by
	$$ \hat{h}_\mu(S) = \frac{2\, [k:\Q]}{|S|} \sum_{t\in S} \hat{h}^+(t)$$
for any $\Gal(\kbar/k)$-invariant, finite set $S$ with $|S|>1$.   Similarly for $\hat{h}^-$ and the $\{\mu^-_v\}$-canonical height.
\end{prop}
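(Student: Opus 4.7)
The plan is to expand both sides in terms of the local potential functions $H^+_{\lambda,v}$ and show they agree using the product formula together with Lemma \ref{global}.

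First I would relate the Call--Silverman height to the local potentials. For $t\in\bar k$ the homogeneous lift $F_t^n(+1,1)$ has coordinates in $k(t)$ given by polynomials in $t$, so the Weil height of $f_t^n(+1)$ decomposes as a sum over places of $k(t)$. Reorganizing this sum by restriction to $k$ (using that $\sum_{w\mid v}[k(t)_w:k_v]f(t_w) = \sum_{s\in G\cdot t} f(s)$ for any fixed embedding $\bar k\hookrightarrow \bar k_v$) and then summing over a Galois-invariant finite set $S$, one obtains
\begin{equation}\label{heightdecomp}
\sum_{s\in S} h(f_s^n(+1)) \;=\; \frac{1}{[k:\Q]} \sum_{v\in\cM_k} N_v \sum_{s\in S}\log\|F_s^n(+1,1)\|_v.
\end{equation}
Dividing by $2^n$ and using the locally uniform convergence from Theorems \ref{convergence} and \ref{non-archimedean convergence} (with the strong convergence of the outer radii from Proposition \ref{quasi-adelic} to justify exchange of limit and infinite sum), I pass to the limit to get
\begin{equation}\label{heightlim}
\sum_{s\in S}\hat h^+(s) \;=\; \frac{1}{[k:\Q]}\sum_{v\in\cM_k} N_v \sum_{s\in S} H^+_{\lambda,v}(s).
\end{equation}

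Next I would expand $\hat h_\mu(S)$ using the explicit Arakelov--Green formula \eqref{explicit g}. Picking affine lifts $\tilde s = (s,1)$ for each $s\in S$, so that $|\tilde x\wedge\tilde y|_v = |x-y|_v$ and $G_v^+(\tilde s) = 2 H^+_{\lambda,v}(s)$, the pairing expands as
\begin{equation}\label{expansion}
\sum_{x\neq y\in S} g_{\mu_v^+}(x,y) \;=\; -\!\!\sum_{x\neq y}\log|x-y|_v \;+\; 4(|S|-1)\sum_{s\in S}H^+_{\lambda,v}(s) \;+\; |S|(|S|-1)\log\capacity(K_v^+).
\end{equation}
I then show that when one sums \eqref{expansion} over $v$ weighted by $N_v$, two of the three terms vanish. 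The cross-terms vanish because the multiset $\{x-y : x\neq y\in S\}\subset\bar k^\times$ is Galois-invariant (since $S$ is), so it decomposes into a disjoint union of Galois orbits; the product formula applied to each orbit yields $\sum_v N_v\sum_{x\neq y}\log|x-y|_v = 0$. The capacity terms vanish by Lemma \ref{global}.

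Combining what remains with the definition \eqref{height definition} of $\hat h_\mu$ gives
$$\hat h_\mu(S) \;=\; \frac{|S|}{|S|-1}\cdot\frac{1}{2|S|^2}\cdot 4(|S|-1)\sum_v N_v\sum_{s\in S} H^+_{\lambda,v}(s) \;=\; \frac{2}{|S|}\sum_v N_v\sum_{s\in S} H^+_{\lambda,v}(s),$$
and substituting \eqref{heightlim} yields the desired identity. The statement for $\hat h^-$ follows verbatim by replacing $+1$ with $-1$ throughout.

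The main obstacle is the first step: justifying \eqref{heightdecomp} and the passage to the limit \eqref{heightlim} in the quasi-adelic setting, where $H^+_{\lambda,v}$ is nontrivial at infinitely many places $v$. One must verify that the bounds underlying Theorems \ref{convergence} and \ref{non-archimedean convergence} (together with the diameter estimates of Proposition \ref{the bounds for K_v^+}) are uniform enough in $v$ to interchange the limit in $n$ with the sum over places; this is precisely the content of the quasi-adelic framework established in Proposition \ref{quasi-adelic}. Once \eqref{heightlim} is in hand, the rest of the proof is algebraic bookkeeping combined with two applications of the product formula.
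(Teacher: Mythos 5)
Your proposal is correct and follows essentially the same route as the paper's proof: expand $\hat{h}_\mu(S)$ via the explicit Arakelov--Green formula (\ref{explicit g}), kill the wedge terms by Galois invariance and the product formula and the capacity terms by Lemma \ref{global}, and identify the remaining sum of local potentials with the Call--Silverman height by exchanging the limit in $n$ with the sum over places. The interchange step you flag as the main obstacle is handled in the paper exactly with the ingredients you name --- Lemma \ref{trivial K}, the finite-level bounds behind Proposition \ref{the bounds for K_v^+}, together with Lemmas \ref{places bound} and \ref{global} for summability of the tails --- rather than by Proposition \ref{quasi-adelic} itself, but this is the verification you anticipated.
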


\proof
Fix a finite set $S\subset \kbar$ which is $\Gal(\kbar/k)$-invariant and has at least two elements. We begin by computing the $\{\mu^+_v\}$-canonical height of $S$, from the definition given in (\ref{height definition}).  For each $t\in S$, we choose a lift $\tilde{t} \in \kbar^2$ of $t$.

\begin{eqnarray*}
\hhat_{\mu}(S) &=& \frac{|S|}{|S|-1}\sum_{v\in \cM_k} N_v \cdot ([S]_v, [S]_v)_{\mu_v} \\
	&=&  \frac{|S|}{|S|-1}  \sum_{v\in \cM_k} \frac{N_v }{2|S|^2}\sum_{x,y\in S, x\not=y}  g_{\mu_v}(x,y) \\	
	&=&   \frac{1}{2|S|(|S|-1)} \sum_{x,y\in S, x\not=y}   \sum_{v\in \cM_k} N_v \, ( -\log|\tilde{x} \wedge \tilde{y}|_v + G^+_{v}(\tilde{x}) + G^+_{v}(\tilde{y}) + \log \capacity(K^+_{\lambda,v}) ) \\
	&=&  \frac{1}{2|S|(|S|-1)} \sum_{x,y\in S, x\not=y}  \; \sum_{v\in \cM_k} N_v \, (G^+_{v}(\tilde{x}) + G^+_{v}(\tilde{y}) )  \quad \mbox{ by (\ref{product formula}) and Lemma \ref{global}} \\
	&=& \frac{1}{2|S|(|S|-1)} \sum_{v\in \cM_k} N_v  \cdot 2\, (|S|-1)\sum_{x\in S} G^+_{v}(\tilde{x}) \\
	&=&  \frac{1}{|S|} \sum_{v\in \cM_k} N_v \cdot \sum_{x\in S} G^+_{v}(\tilde{x})\\
     &=&  \frac{1}{|S|}  \cdot \sum_{x\in S}\sum_{v\in \cM_k} N_v G^+_{v}(\tilde{x})
	\end{eqnarray*}
Note that the product formula (and homogeneity of $G_v^+$) implies that $\sum_v G_v^+(\tilde{x})$ depends on $x$ but is independent of the choice of $\tilde{x}$.  This formula for $\hhat_\mu$ also shows that it extends to define a function on points of $\P^1(k)$, as mentioned in the remark after equation (\ref{height definition}). 

Recall that the Weil height of $\alpha\in \kbar$ may be computed as
$$h(\alpha) = \frac{1}{[k(\alpha):\Q]} \sum_{v\in\cM_{k(\alpha)}} N_v \log \|(\alpha_1, \alpha_2)\|_v = \frac{1}{[k(\alpha):\Q]} \sum_{v\in\cM_{k(\alpha)}} N_v \log\max\{|\alpha_1|_v, |\alpha_2|_v\}$$
for any homogeneous presentation $(\alpha_1, \alpha_2)\in k(\alpha)^2$ of $\alpha$ (so $\alpha = \alpha_1/\alpha_2$).  Further, if $A$ is a $\Gal(\kbar/k)$-invariant and finite set, then
	$$h(A) := \frac{1}{|A|} \sum_{\alpha\in A} h(\alpha) = \frac{1}{[k:\Q]} \frac{1}{|A|} \sum_{\alpha\in A} \sum_{v\in\cM_{k}} N_v \log\max\{|\alpha_1|_v, |\alpha_2|_v\},$$
where $|\cdot|_v$ is a fixed choice of extension to $\kbar$ of the absolute value $|\cdot|_v$ on $k$.

For each $t\in S$, from the definition in (\ref{F_n plus}), the point $F^+_n(\tilde{t})\in \kbar^2$ is a homogeneous presentation of $f_t^n(+1)\in \kbar$.  The Galois invariance of $S$ implies that
\begin{equation} \label{limit}
\hat{h}^+(S) = \frac{1}{|S|} \sum_{t\in S} \hat{h}^+(t)
=\frac{1}{[k:\Q]} \frac{1}{|S|} \sum_{t\in S} \lim_{n\rightarrow\infty}\frac{1}{2^n}\sum_{v\in \mathcal{M}_k} N_v \log\|F_n(\tilde{t})\|_v.
\end{equation}
We need to show that we can interchange the limit and the infinite sum over $\cM_k$.  Then Theorems \ref{convergence} and \ref{non-archimedean convergence} will imply that
	$$\hat{h}^+(S) = \frac{1}{2 |S| [k:\Q]} \sum_{t\in S} \sum_{v\in \cM_k} N_v G_v^+(\tilde{t}) = \frac{1}{2 [k:\Q]}\, \hat{h}_\mu (S),$$
completing the proof of the theorem.

To see that we may interchange the limit and the sum in (\ref{limit}), we can use Lemma \ref{trivial K} and Proposition \ref{the bounds for K_v^+}.   Outside of a finite number of places, we have $\|\tilde{t}\|_v = 1$ for all $t\in S$.  For $v\in \cM_{k,0}$, Lemma \ref{trivial K} states that $\|F_n(\tilde{t})\|_v = 1$ for all $n$ when $\|\tilde{t}\|_v=1$, and so these terms do not contribute to the sum of (\ref{limit}).  For $v\in \cM_{k,m}$ with $m\geq 1$, Proposition \ref{the bounds for K_v^+} implies that
	$$e^{2^{n-1}(2\gamma_v(\lambda) + (\log c)/2^{m-1})} \leq \|F_n(\tilde{t})\|_v \leq 1$$
for all $n\geq 1$, when $\|\tilde{t}\|_v = 1$.  Since the sum of the $\gamma_v(\lambda)$ converges absolutely (Lemma \ref{global}) and there aren't too many elements in each $\cM_{k,m}$ (Lemma \ref{places bound}), we deduce that for any $\eps>0$, there is a finite set $\cM(\eps)$ of places so that
	$$\frac{1}{2^n} \sum_{v\in \cM_k \setminus \cM(\eps)} N_v \left| \log \|F_n(\tilde{t}) \|_v \right| < \eps$$
for every $n\geq 1$.  This is enough to allow the exchange of the limit with the sum in (\ref{limit}).

The proof for $\hat{h}^-$ and the $\{\mu^-_v\}$-canonical height is identical.
\qed

\subsection{Proofs of the equidistribution theorems.}  We are ready to complete the proofs of the equidistribution theorems.

\medskip\noindent{\em Proof of Theorem \ref{equidistribution at all places}.}
The proof is immediate from Theorem \ref{quasi-adelic equidistribution}, once we know that the canonical height functions $\hat{h}^+$ and $\hat{h}^-$ are associated to the quasi-adelic measures $\{\mu^\pm_v\}$.  That is the content of Propositions \ref{quasi-adelic} and \ref{same heights}.
\qed

\medskip\noindent{\em Proof of Theorem \ref{equidistribution}.}
This theorem is an immediate corollary of Theorem \ref{equidistribution at all places}, because the parameters where the critical point $\pm 1$ has finite orbit coincide with the points of canonical height 0, by \cite[Corollary 1.1.1]{Call:Silverman}.
\qed

\bigskip
\section{Proof of Theorem \ref{PCF maps}}
\label{proof section}

In this section, we complete the proof of Theorem \ref{PCF maps}.  As discussed in the introduction, one implication is well known; namely, the family of quadratic polynomials $\Per_1(0)$ contains infinitely many postcritically-finite maps.  Indeed, as explained in Example \ref{quadratic poly}, the bifurcation locus for the (non-fixed) critical point is nonempty.  The conclusion then follows from Lemma \ref{activity}.  

For dynamical reasons, there can be no postcritically-finite maps in $\Per_1(\lambda)$ for $0 < |\lambda|\leq 1$.  Indeed, if $f$ has a fixed point of multiplier $\lambda$, at least one critical point must have infinite forward orbit, as it is attracted to (or accumulates upon) the fixed point (or on the boundary of the Siegel disk in case the fixed point is of Siegel type).  See \cite[Corollary 14.5]{Milnor:dynamics}.  (Actually, for our proof of Theorem \ref{PCF maps}, we only need the easier fact that parabolic cycles always attract a critical point with infinite forward orbit \cite[\S10]{Milnor:dynamics}, since all other cases follow from the arguments below.)

Now assume that $\lambda\in\C$, $|\lambda| >1$, is chosen so that $\Per_1(\lambda)$ contains infinitely many postcritically-finite maps.  We shall derive a contradiction.  We first observe that $\lambda\in\Qbar$, as a consequence of Thurston Rigidity.

\begin{prop} \label{algebraic}
If  $f_{\lambda, t}(z) = \lambda z / (z^2 + t z + 1)$ is postcritically finite, then $\lambda, t\in \Qbar$.
\end{prop}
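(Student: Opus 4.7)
The plan is to deduce algebraicity of $(\lambda,t)$ from Thurston's rigidity theorem combined with the observation that the postcritically-finite condition is cut out by polynomial equations in the parameters. The argument has three stages: first, write down an algebraic subvariety of the parameter space on which $(\lambda,t)$ lives; second, invoke Thurston rigidity to show that this subvariety is zero-dimensional at $(\lambda,t)$; third, conclude that isolated $\C$-points of $\Q$-varieties are algebraic.

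First I would choose integers $n_\pm > m_\pm \geq 0$ witnessing post-critical finiteness of $f = f_{\lambda_0,t_0}$, so that $f^{n_+}(+1)=f^{m_+}(+1)$ and $f^{n_-}(-1)=f^{m_-}(-1)$. Iterating the rational function $f_{\lambda,t}(z) = \lambda z/(z^2+tz+1)$ symbolically, these two relations become polynomial equations in $(\lambda,t)$ with coefficients in $\Q$, cutting out an algebraic subvariety $V \subset \A^2_\Q$ that contains $(\lambda_0,t_0)$. Note that the individual relations for $+1$ and $-1$ each define a plane curve; the crux is that their intersection $V$ is zero-dimensional at the given point.

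To establish that $V$ is zero-dimensional at $(\lambda_0,t_0)$, I would recall that the map $(\lambda,t)\mapsto [f_{\lambda,t}]\in\M_2$ is algebraic with finite fibers: $\lambda$ is one of the three fixed-point multipliers of $[f]$, and for fixed $\lambda$ the parameter $t$ is a coordinate on the degree-two cover $\Per_1(\lambda)^{cm}$ of the line $\Per_1(\lambda)\subset\M_2$. If a positive-dimensional component of $V$ passed through $(\lambda_0,t_0)$, it would project to a positive-dimensional family in $\M_2$ of PCF conjugacy classes sharing a common critical portrait. Thurston's rigidity theorem forbids such families for PCF maps that are not flexible Lattès, and flexible Lattès maps require degree $d=n^2$ with $n\geq 2$, so never occur for $d=2$. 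Therefore $(\lambda_0,t_0)$ is an isolated point of $V$.

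Finally, an isolated $\C$-point of an algebraic variety defined over $\Q$ is necessarily algebraic: the irreducible component of $V$ through $(\lambda_0,t_0)$ is a zero-dimensional closed subscheme of $\A^2_\Q$, and its $\C$-points lie in $\Qbar^2$. The main obstacle in fleshing out this plan is the rigorous invocation of Thurston rigidity in the form ``no positive-dimensional family of PCF maps sharing a fixed portrait exists in $\M_2$''; this is standard (it follows from the Douady--Hubbard reformulation of Thurston's theorem via non-existence of Thurston obstructions, together with the absence of flexible Lattès in degree $2$), but it is the one genuinely non-elementary input. The rest of the argument, including the translation of PCF relations into polynomial equations and the passage from isolation to algebraicity, is formal.
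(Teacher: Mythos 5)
Your proposal is correct and follows essentially the same route as the paper: translate the two critical-orbit relations into polynomial equations over $\Q$ in $(\lambda,t)$, invoke Thurston rigidity (via the absence of flexible Latt\`es maps in degree $2$ and the finite-to-one map to $\M_2$) to rule out positive-dimensional solution families, and conclude algebraicity. Your extra care in passing from isolation of the point to algebraicity, rather than asserting finiteness of the whole solution set, is a harmless refinement of the paper's argument.
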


\proof
The critical points of a postcritically-finite map satisfy two equations
$$f_{\lambda, t}^n(+1) = f_{\lambda, t}^m(+1) \qquad \mbox{ and } \qquad f_{\lambda, t}^r(-1) = f_{\lambda, t}^s(-1)$$
for pairs of integers $n>m\geq0$ and $r>s\geq 0$.  Note that these two equations define polynomials in $(\lambda, t)$ with coefficients in $\Q$.  By Thurston Rigidity (see \cite[Theorem 2.2]{McMullen:families}), we know that the set of solutions must be finite (or empty), since there are no flexible Latt\`es maps in degree 2.  Consequently, a solution $(\lambda,t)$ must have coordinates in $\Qbar$.
\qed

\medskip
Let $k = \Q(\lambda)$.  Then $k$ is a number field, by Proposition \ref{algebraic}.  Construct the Call-Silverman canonical height functions $\hat{h}^+_\lambda$ and $\hat{h}^-_\lambda$ on $\kbar$ as in Section \ref{equidistribution section}.  Let $\{t_n\}_{n\in\N}$ denote a sequence of parameters in $\Per_1(\lambda)^{cm}$ for which both critical points have finite orbit.  Proposition \ref{algebraic} also shows that $t_n\in\kbar$ for all $n$.  Let $S_n$ denote the $\Gal(\kbar/k)$-orbit of $t_n$.  From \cite[Corollary 1.1.1]{Call:Silverman}, we see that
	$$\hat{h}^+_\lambda(S_n) = \hat{h}^-_\lambda(S_n) = 0$$
for all $n$.  By Theorem \ref{equidistribution at all places}, the sequence $\{S_n\}$ is equidistributed with respect to the bifurcation measures $\mu^+_v$ and $\mu^-_v$ at all places $v$ of $k$.  It follows that $\mu^+_v = \mu^-_v$ for all $v$, and therefore the quasi-adelic height functions (defined in (\ref{height definition})) must coincide.  From Proposition \ref{same heights}, we conclude that
	$$\hat{h}^+_\lambda = \hat{h}^-_\lambda$$
on $\Per_1(\lambda)^{cm}$.  Again appealing to \cite[Corollary 1.1.1]{Call:Silverman}, we find that the critical point $+1$ will have finite orbit for $f_t$ if and only if $-1$ has finite orbit for $f_t$.  This conclusion contradicts Proposition \ref{no synchrony}.  The proof is complete.

\begin{remark}
In our proof, we have used the full strength of Theorem \ref{equidistribution at all places}, with the equidistribution at all places of the number field $k = \Q(\lambda)$ to deduce that $\hat{h}^+ = \hat{h}^-$.  Alternatively, we could have used only Theorem \ref{equidistribution}, the equidistribution to the (complex) bifurcation measure $\mu^+_\lambda$, and deduced that $\mu^+_\lambda = \mu^-_\lambda$.  This conclusion would contradict Theorem \ref{distinct measures}.
\end{remark}

\bigskip \bigskip

\begin{thebibliography}{GHT2}

\bibitem[BD1]{BD:preperiodic}
M.~Baker and L.~DeMarco.
\newblock {Preperiodic points and unlikely intersections}.
\newblock {\em Duke Math. J.} {\bf 159}(2011), 1--29.

\bibitem[BD2]{BD:polyPCF}
M.~Baker and L.~DeMarco.
\newblock {Special curves and postcritically-finite polynomials}.
\newblock {\em Forum of Math. Pi} {\bf 1}(2013), 35 pp.

\bibitem[BR1]{Baker:Rumely:equidistribution}
M.~Baker and R.~Rumely.
\newblock {Equidistribution of small points, rational dynamics, and potential
  theory}.
\newblock {\em Ann. Inst. Fourier (Grenoble)} {\bf 56}(2006), 625--688.

\bibitem[BR2]{BRbook}
M.~Baker and R.~Rumely.
\newblock {\em Potential theory and dynamics on the {B}erkovich projective
  line}, volume 159 of {\em Mathematical Surveys and Monographs}.
\newblock American Mathematical Society, Providence, RI, 2010.

\bibitem[BH]{Baker:Hsia}
M.~H.~Baker and L.-C.~Hsia.
\newblock {Canonical heights, transfinite diameters, and polynomial dynamics}.
\newblock {\em J. Reine Angew. Math.} {\bf 585}(2005), 61--92.

\bibitem[BEP]{Berker:Epstein:Pilgrim}
S.~Berker, A.~L.~Epstein, and K.~M.~Pilgrim.
\newblock {Remarks on the period three cycles of quadratic rational maps}.
\newblock {\em Nonlinearity} {\bf 16}(2003), 93--100.

\bibitem[BG]{Berteloot:Gauthier}
F.~Berteloot and T.~Gauthier.
\newblock {On the geometry of bifurcation currents for quadratic rational
  maps}.
\newblock {To appear, {\em Ergodic Theory Dynamical Systems}}.

\bibitem[BEE]{Buff:Epstein:Ecalle}
X.~Buff, A.~Epstein, and J.~Ecalle.
\newblock {Limits of degenerate parabolic quadratic rational maps}.
\newblock {\em Geom. Funct. Anal.} {\bf 23}(2013), 42--95.

\bibitem[CS]{Call:Silverman}
G.~S.~Call and J.~H.~Silverman.
\newblock {Canonical heights on varieties with morphisms}.
\newblock {\em Compositio Math.} {\bf 89}(1993), 163--205.

\bibitem[De1]{D:current}
L.~DeMarco.
\newblock {Dynamics of rational maps: a current on the bifurcation locus}.
\newblock {\em Math. Res. Lett.} {\bf 8}(2001), 57--66.

\bibitem[De2]{D:lyap}
L.~DeMarco.
\newblock {Dynamics of rational maps: {L}yapunov exponents, bifurcations, and
  capacity}.
\newblock {\em Math. Ann.} {\bf 326}(2003), 43--73.

\bibitem[De3]{D:stableheight}
L.~DeMarco
\newblock {Bifurcations, intersections, and heights}.
\newblock {\em Preprint, {\em 2014}}.


\bibitem[DR]{DR:transfinite}
L.~DeMarco and R.~Rumely.
\newblock {Transfinite diameter and the resultant}.
\newblock {\em J. Reine Angew. Math.} {\bf 611}(2007), 145--161.

\bibitem[DWY]{DWY:Lattes}
L.~DeMarco, X.~Wang, and H.~Ye.
\newblock {Torsion points and the Latt\`es family}.
\newblock To appear, {\em American Journal of Math.}

\bibitem[DH]{Douady:Hubbard}
A.~Douady and J.~H.~Hubbard.
\newblock {It\'eration des polyn\^omes quadratiques complexes}.
\newblock {\em C. R. Acad. Sci. Paris S\'er. I Math.} {\bf 294}(1982),
  123--126.

\bibitem[DF]{Dujardin:Favre:critical}
R.~Dujardin and C.~Favre.
\newblock {Distribution of rational maps with a preperiodic critical point}.
\newblock {\em Amer. J. Math.} {\bf 130}(2008), 979--1032.

\bibitem[FG]{Favre:Gauthier}
C.~Favre and T.~Gauthier.
\newblock {Distribution of postcritically finite polynomials}.
\newblock {To appear, {\em Israel Journal of Math.}}

\bibitem[FRL]{FRL:equidistribution}
C.~Favre and J.~Rivera-Letelier.
\newblock {\'{E}quidistribution quantitative des points de petite hauteur sur
  la droite projective}.
\newblock {\em Math. Ann.} {\bf 335}(2006), 311--361.

\bibitem[FS]{Fornaess:Sibony}
J.~E.~Forn{\ae}ss and N.~Sibony.
\newblock {Complex dynamics in higher dimensions}.
\newblock In {\em Complex Potential Theory (Montreal, PQ, 1993)}, pages
  131--186. Kluwer Acad. Publ., Dordrecht, 1994.

\bibitem[GHT1]{Ghioca:Hsia:Tucker}
D.~Ghioca, L.-C.~Hsia, and T.~Tucker.
\newblock {Preperiodic points for families of polynomials}.
\newblock {\em Algebra and Number Theory}, {\bf 7}(2013), 701--732.

\bibitem[GHT2]{GHT:preprint}
D.~Ghioca, L.-C. Hsia, and T.~Tucker.
\newblock {Preperiodic points for families of rational maps}.
\newblock {\em Proc. London Math. Soc.} {\bf 110}(2015), 395--427.

\bibitem[GK]{Goldberg:Keen:shift}
L.~R.~Goldberg and L.~Keen.
\newblock {The mapping class group of a generic quadratic rational map and
  automorphisms of the {$2$}-shift}.
\newblock {\em Invent. Math.} {\bf 101}(1990), 335--372.


\bibitem[HY]{Herman: Yoccoz}
M.~Herman and J. C.~Yoccoz.
\newblock {Generalizations of some theorems of small divisors to non archimedean fields}.
\newblock {\em Geometric Dynamics,
Lecture Notes in Mathematics.} {\bf 1007}(1983), 408--447.

\bibitem[HP]{Hubbard:Papadopol}
J.~Hubbard and P.~Papadopol.
\newblock {Superattractive fixed points in ${\bf C}^n$}.
\newblock {\em Indiana Univ. Math. J.} {\bf 43}(1994), 321--365.

\bibitem[Le]{Levin:iteration}
G.~M.~Levin.
\newblock {On the theory of iterations of polynomial families in the complex
  plane}.
\newblock {\em Teor. Funktsi\u\i\ Funktsional. Anal. i Prilozhen.} {\bf
  51}(1989), 94--106.

\bibitem[MSS]{Mane:Sad:Sullivan}
R.~Ma\~{n}{\'e}, P.~Sad, and D.~Sullivan.
\newblock {On the dynamics of rational maps}.
\newblock {\em Ann. Sci. Ec. Norm. Sup.} {\bf 16}(1983), 193--217.

\bibitem[Mc1]{McMullen:families}
C.~McMullen.
\newblock {Families of rational maps and iterative root-finding algorithms}.
\newblock {\em Ann. of Math. (2)} {\bf 125}(1987), 467--493.

\bibitem[Mc2]{McMullen:CDR}
C.~McMullen.
\newblock {\em Complex Dynamics and Renormalization}.
\newblock Princeton University Press, Princeton, NJ, 1994.

\bibitem[Mi1]{Milnor:quad}
J.~Milnor.
\newblock {Geometry and dynamics of quadratic rational maps}.
\newblock {\em Experiment. Math.} {\bf 2}(1993), 37--83.
\newblock With an appendix by the author and Lei Tan.

\bibitem[Mi2]{Milnor:dynamics}
J.~Milnor.
\newblock {\em Dynamics in one complex variable}, volume 160 of {\em Annals of
  Mathematics Studies}.
\newblock Princeton University Press, Princeton, NJ, {T}hird edition, 2006.

\bibitem[Pe]{Petersen:elliptic}
C.~L.~Petersen.
\newblock {No elliptic limits for quadratic maps}.
\newblock {\em Ergodic Theory Dynam. Systems} {\bf 19}(1999), 127--141.

\bibitem[Si1]{Silverman}
J.~H.~Silverman.
\newblock {The space of rational maps on $\bf {P}\sp 1$}.
\newblock {\em Duke Math. J.} {\bf 94}(1998), 41--77.

\bibitem[Si2]{Silverman:moduli}
J.~H.~Silverman.
\newblock {\em Moduli spaces and arithmetic dynamics}, volume~30 of {\em CRM
  Monograph Series}.
\newblock American Mathematical Society, Providence, RI, 2012.

\bibitem[Uh]{Uhre:model}
E.~Uhre.
\newblock {A model for the parabolic slices {${\rm Per}_1(e^{2\pi ip/q})$} in
  moduli space of quadratic rational maps}.
\newblock {\em C. R. Math. Acad. Sci. Paris} {\bf 348}(2010), 1327--1330.

\bibitem[Ye]{Ye:quasi}
H.~Ye.
\newblock {Quasi-adelic measures and equidistribution on $\P^1$}.
\newblock {Preprint, {2014}}.

\end{thebibliography}
\def\cprime{$'$}

\end{document}